\tikzstyle{vertex}=[circle, draw, inner sep=0pt, minimum size=6pt]
\newtheorem{thm}{Theorem}[section]
\newtheorem{prop}[thm]{Proposition}
\newtheorem{lemma}[thm]{Lemma}
\newtheorem{cor}[thm]{Corollary}
\newtheorem{definition}[thm]{Definition}
\newtheorem{remark}{Remark}
\theoremstyle{definition}
\newcommand{\g}{{\mathfrak{g}}}
\newcommand{\del}{{\partial }}
\numberwithin{equation}{section}
\numberwithin{table}{section}
\begin{document}

\author{\sc Nikolay Grantcharov}
\address
{Department of Mathematics \\
University of Chicago   \\
Chicago \\ IL~60637}
\email{nikolayg@uchicago.edu}

\title{Infinitesimal Jet spaces of $\mathrm{Bun_G}$ in positive characteristic}
\begin{abstract}
Given a semisimple reductive group $G$ and a smooth projective curve $X$ over an algebraically closed field $k$ of arbitrary characteristic, let $\text{Bun}_G$ denote the moduli space of principal $G$-bundles over $X$. For a bundle $P\in\text{Bun}_G$ without infinitesimal symmetries, we provide a description of all divided-power infinitesimal jet spaces, $J_P^{n,PD}(\text{Bun}_G)$, of $\text{Bun}_G$ at $P$. The description is in terms of differential forms on $X^n$ with logarithmic singularities along the diagonals and with coefficients in $(\mathfrak{g}_P^*)^{\boxtimes n}$. Furthermore, we show the pullback of these differential forms to the Fulton-Macpherson compactification of the configuration space, $\hat{X}^n$, is an isomorphism. Thus, we relate the two constructions of \cite{BD,BG}, and as a consequence, give a connection between divided-power infinitesimal jet spaces of $\text{Bun}_G$ and the $\mathcal{L}ie$ operad.
\end{abstract}
\maketitle
\section{Introduction}
\subsection{} Let $X$ be a smooth projective curve over an algebraically closed field $k$ of characteristic $0$ or $p$. Let $G$ be a semi-simple reductive algebraic group over $k$. Let $\mathcal{M}\subset\text{Bun}_{G,X}(k)$ denote underlying smooth locus $\mathcal{M}$ of the moduli space of principal $G$-bundles over $X$, consisting of principal $G$-bundles with a finite group of automorphisms. Over an algebraically closed field of characteristic $0$, the problem of studying infinitesimal neighborhoods of a point in $\mathcal{M}$, i.e infinitesimal jet spaces, has been extensively studied. For example, there is a description in terms of deformation theory over the Ran space (\cite{Ran1,Ran2}), a description in terms hypercohomology of a complex built by ``higher Kodaira-Spencer classes'' (\cite{EV94}), and a description in terms of logarithmic differential forms on $X^n$ or $\hat{X}^n$, the Fulton-Macpherson compactification space (\cite{BD,BG,G}). 

The main theorems of the present paper are Theorem \ref{differential operators and forms on Sigma} and Theorem \ref{pullback of BD is BG}. Theorem \ref{differential operators and forms on Sigma}, and its analogue over $\hat{X}^n$, was stated in \cite{BD,BG,G} for characteristic 0 without proof. Our proof of this theorem, characteristic-independent, is motivated by \cite[Prop. 10.5.9]{BZF}, where the authors prove the analogues statement for the moduli space of line bundles on $X$. In fact, this proof may be adapted to other familiar moduli problems, e.g  moduli space of curves, or moduli space of $G$-bundles with flat connection, to show that divided-power infinitesimal jet spaces are identified with the corresponding sheaf of logarithmic differential forms. Next, Theorem \ref{pullback of BD is BG} shows that pulling back global sections of the \cite{BD} sheaf along the Fulton-Macpherson compactification $\hat{X}^n$ gives global sections of the \cite{BG} sheaf. This implies a relation between the divided-power infinitesimal jet spaces of $\text{Bun}_G$ and the $\mathcal{L}ie$-operad - Corollary \ref{jets and Lie operad}.

\noindent
\subsection{} The paper is organized as follows. In Section \ref{Residues}, we recall some general properties on differential forms with logarithmic singularities along a normal-crossing divisor. In Section \ref{Divided powers}, we recall the definition of a divided-power algebra and show divided-power infinitesimal jet spaces pair perfectly with the ``space of coinvariants'' of $\text{Bun}_G$. In Section \ref{universal sheaf}, we prove the main result that divided-power infinitesimal $n^{th}$-order jet space is isomorphic to a space of logarithmic differential forms on $X^n$ with special residue along the diagonals. This is summarized in diagram \ref{summary of jets}. Finally, in Section \ref{operads}, we prove the pull-back of global sections of \cite{BD} sheaf give global sections of \cite{BG} sheaf, and discuss how the latter is related to the $\mathcal{L}ie$ operad.

\subsection{Acknowledgements} The author expresses his sincerest gratitude to Victor Ginzburg for his constant guidance, support, and feedback throughout each step of this project. The author also thanks Sasha Beilinson for many fruitful conversations, especially for explaining \cite{BD} and sharing his proof of a key Lemma \ref{extending section from tubes to plane}. Finally, the author thanks Aaron Slipper for relentlessly answering the author's questions about $\text{Bun}_G$. The author was supported by NSF grant DGE 1746045.

\section{Residues on $X^n$}\label{Residues}
In this section, we recall some results on residues which will be used throughout this paper. As before, $X$ is a smooth projective curve over an arbitrary algebraically closed field $k$.
\subsection{Local Residue}\label{Local Residue}
Fix a closed point $x\in X$ on the curve and let $t_x$ be a uniformizer. There is an identification $ k[[t_x]]\cong O_x:=\hat{\mathcal{O}}_{X,x}$, the ring of regular functions on the completion of the stalk at $x$. And, on fraction fields we have $k((t_x))\cong K_x:= \text{frac}(\hat{\mathcal{O}}_{X,x})$. Let $D_x=\text{Spec}(\hat{\mathcal{O}}_{X,x}), D_x^\times = \text{Spec}(K(\hat{\mathcal{O}}_{X,x}))$ be the formal disk, formal punctured disk, respectively. 

The notion of a residue along a formal disk inside a smooth projective curve $X$ was first developed by \cite{Tate}. Given $\omega\in\Omega^1(X)$, express $\omega = f(t_x)dt_x$, for $f\in k((t_x))$, and write 
\begin{equation}\label{Tate residue}
\text{Res}_x\omega :=\text{Res}_{t_x=0}f(t_x)dt_x.
\end{equation}
Tate showed the residue is independent of choice of uniformizer. Furthermore, there is a ``strong residue theorem'':

\begin{thm}\label{SRT}\cite[Sect. 9.2.10]{BZF}(Strong Residue Theorem)
Let $\mathcal{E}$ be a locally free sheaf over $X$. Then a section $s\in\mathcal{E}(D_x^\times)$ extends to $\tilde{s}\in\mathcal{E}(X\setminus x)$ if and only if
$$\text{Res}_x(s,\omega)=0\;\;\text{ for all } \omega\in(\mathcal{E}^*\otimes\Omega_X)(X\setminus x).$$
As a corollary, there is a perfect pairing, given by the reside at $x$, between
$$\mathcal{E}(D_x^\times)/\mathcal{E}(X\setminus x)\times (\mathcal{E}^*\otimes\Omega_X)(X\setminus x)\xrightarrow{\text{Res}_x} k$$
\end{thm}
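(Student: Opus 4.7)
My plan is to prove the extension criterion in both directions; the perfect pairing then follows formally from non-degeneracy on each side.

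For the \emph{only if} direction, suppose $s$ extends to $\tilde{s}\in\mathcal{E}(X\setminus x)$. Then for any $\omega\in(\mathcal{E}^*\otimes\Omega_X)(X\setminus x)$ the product $(\tilde{s},\omega)$ is a meromorphic $1$-form on $X$ regular on $X\setminus x$, so the classical residue theorem $\sum_{y\in X}\text{Res}_y(\tilde{s},\omega)=0$ collapses to $\text{Res}_x(s,\omega)=0$.

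For the \emph{if} direction, the strategy is Serre duality together with a pole-order exhaustion. Filter $(\mathcal{E}^*\otimes\Omega_X)(X\setminus x)=\bigcup_n H^0(X,\mathcal{E}^*(nx)\otimes\Omega_X)$. Serre duality provides
\[
H^0(X,\mathcal{E}^*(nx)\otimes\Omega_X)^{*}\;\cong\;H^1(X,\mathcal{E}(-nx)),
\]
and the Beauville--Laszlo cover $\{D_x,\,X\setminus x\}$ identifies
\[
H^1(X,\mathcal{E}(-nx))\;=\;\mathcal{E}(D_x^\times)\,\big/\,\bigl(t_x^n\mathcal{E}(D_x)+\mathcal{E}(X\setminus x)\bigr),
\]
with the duality pairing realized by the local residue at $x$. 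The hypothesis forces the image of $s$ in $H^1(X,\mathcal{E}(-nx))$ to vanish for every $n$; choose a decomposition $s=a_n+b_n$ with $a_n\in t_x^n\mathcal{E}(D_x)$ and $b_n\in\mathcal{E}(X\setminus x)$. Then
\[
b_n-b_m=a_m-a_n\;\in\;\mathcal{E}(X\setminus x)\cap t_x^{\min(n,m)}\mathcal{E}(D_x)=H^0(X,\mathcal{E}(-\min(n,m)\cdot x)),
\]
which vanishes once $\min(n,m)$ exceeds the maximal slope of a line sub-bundle of $\mathcal{E}$ (Riemann--Roch plus the standard bound). Hence $\{b_n\}$ stabilizes to some $b\in\mathcal{E}(X\setminus x)$, and $s-b\in\bigcap_n t_x^n\mathcal{E}(D_x)=0$ by $t_x$-adic separation of $\mathcal{E}(D_x)$, so $s=b$ extends as required.

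The perfect pairing then reduces to left non-degeneracy (the \emph{if} direction just proved) plus right non-degeneracy: if $\omega\neq 0$, its restriction to $D_x^\times$ is nonzero by injectivity of restriction for the locally free sheaf $\mathcal{E}^*\otimes\Omega_X$, so expanding $\omega=\sum \omega_k t_x^k dt_x$ in a local trivialization and taking $s$ to be the appropriate Laurent monomial produces a nonzero residue. The main obstacle is the precise identification of abstract Serre duality with the explicit residue pairing through the Beauville--Laszlo/\v{C}ech description of $H^1$: this identification is standard for locally free sheaves on a smooth projective curve but must be set up carefully, including the verification that the flat-descent complex for the cover $\{D_x,X\setminus x\}$ genuinely computes the derived-functor cohomology used by Serre duality.
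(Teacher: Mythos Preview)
The paper does not prove this theorem at all: it is simply quoted from \cite[Sect.~9.2.10]{BZF} as background, with no argument given. So there is no ``paper's own proof'' to compare against, and your task was really to supply the proof the paper omits.

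Your argument is the standard one and is essentially correct. The \emph{only if} direction via the global residue theorem is fine. For the \emph{if} direction, the Serre-duality plus pole-order-exhaustion strategy is exactly how this is usually done; the stabilization argument for the $b_n$ is correct (a nonzero section of $\mathcal{E}(-Nx)$ yields a line subbundle of $\mathcal{E}$ of degree $\geq N$, so $H^0(X,\mathcal{E}(-Nx))=0$ for $N$ large), and the $t_x$-adic separation of $\mathcal{E}(D_x)$ finishes it. Your own caveat is the right one: the honest work is in checking that the abstract Serre duality isomorphism $H^0(X,\mathcal{E}^*(nx)\otimes\Omega_X)^*\cong H^1(X,\mathcal{E}(-nx))$ is realized by the local residue under the \v{C}ech/Beauville--Laszlo presentation of $H^1$; this is classical but not a one-liner, and your proof is only complete modulo that identification.

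One small point to tighten: you claim a \emph{perfect} pairing from two-sided non-degeneracy, but both sides are infinite-dimensional, so non-degeneracy alone does not give that the induced maps to the full linear duals are isomorphisms. In this context ``perfect'' should be read either as topological duality with respect to the natural pole-order filtrations, or as the statement that $(\mathcal{E}^*\otimes\Omega_X)(X\setminus x)\to\bigl(\mathcal{E}(D_x^\times)/\mathcal{E}(X\setminus x)\bigr)^*$ is an isomorphism onto the ``smooth'' dual (functionals vanishing on $t_x^n\mathcal{E}(D_x)/\mathcal{E}(X\setminus x)$ for some $n$). Your Serre-duality argument actually gives exactly this filtered statement level by level, so you should phrase the conclusion that way rather than appealing only to non-degeneracy.
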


Next, we generalize Tate's residue to include formal neighborhoods inside $X^n$. This is used implicitly in \cite{BD}, but we make this computation explicit and algebraic. Let 
$$O_n=k[[z_1,\dots, z_n]],\;\;\;\; A_n=k[[z_1,\dots, z_n]][z_j^{-1}, (z_i-z_i)^{-1}]_{1\leq i<j\leq n}.$$

\noindent There is a natural inclusion $i_n:O_n\rightarrow A_{n-1}((z_{n-1}-z_n))$ given by $z_n\mapsto z_{n-1}-(z_{n-1}-z_n)$. By Taylor expanding, we see this inclusion extends uniquely to produce an algebra embedding $\Phi_n:A_n\rightarrow A_{n-1}((z_{n-1}-z_n))$. Similarly, there is a unique algebra embedding $\Phi_n:A_n\rightarrow A_{n-1}((z_n))$ extending the natural inclusion $O_n\rightarrow A_{n-1}((z_n))$

\begin{definition}\label{formal residue}
Suppose $f\in A_n$. We say the expansion of $f$ in the $z_{n-1}-z_n$ direction is 
$$\Phi_{n-1,n}(f)=\sum_{i\geq d}P_i\cdot(z_{n-1}-z_n)^i\in A_{n-1}((z_{n-1}-z_n)).$$
We define the residue map 
\begin{align*}
\text{Res}_{z_{n-1}=z_n}(f) &:=\text{ The }(z_{n-1}-z_n)^{-1} \text{ coefficient of } \Phi_{n-1,n}(f).\\
\text{Res}_{z_n=0}(f)&:=\text{ The }z_n^{-1}\text{ coefficient of } \Phi_n(f).
\end{align*}
In both cases, the residue map is an $O_n$-linear map $A_n\rightarrow A_{n-1}$.
\end{definition}
Note, we could have inverted any irreducible polynomials in $O_n$, and defined the ``local residue'' of $A_n$ along these polynomials appropriately. Finally, we recall a fundamental result on residues:
\begin{lemma}(Parshin)\label{Parshin}
Suppose $f\in A_3$. Then 
$$\text{Res}_{z_1=0}\text{Res}_{z_2=0}(f)-\text{Res}_{z_2=0}\text{Res}_{z_1=0}(f) = -\text{Res}_{z_1=0}\text{Res}_{z_1=z_2}(f)$$
\end{lemma}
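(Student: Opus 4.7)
The plan is to verify the identity by direct formal computation, after reducing to the case of Laurent monomials in $z_1, z_2, z_1-z_2$.

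First, I would observe that all three residue operations in the statement act only on the variables $z_1$ and $z_2$; the remaining variable $z_3$ and the associated polar divisors $z_3, z_1-z_3, z_2-z_3$ of $A_3$ behave as multiplicative parameters that pass through each residue and cancel from both sides. Hence it suffices to prove the identity inside the subring $R := k[[z_1,z_2]][z_1^{-1}, z_2^{-1}, (z_1-z_2)^{-1}]$.

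Next, I would write any $f \in R$ as $f = g(z_1,z_2)/(z_1^i z_2^j (z_1-z_2)^k)$ for a power series $g$ and integers $i,j,k \geq 0$. Expanding $g$ into its Taylor monomials and using the continuity and $k$-linearity of all three residue maps (in the adic topology on $R$), the problem reduces to verifying the identity on monomials $f = z_1^a z_2^b (z_1-z_2)^c$, $a,b,c \in \mathbb{Z}$.

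For such a monomial, I would compute each iterated residue using the binomial expansions prescribed by Definition \ref{formal residue}: for $\text{Res}_{z_2=0}$, expand $(z_1-z_2)^c = z_1^c \sum_{k \geq 0} \binom{c}{k} (-z_2/z_1)^k$; for $\text{Res}_{z_1=0}$, the symmetric expansion in $z_1/z_2$; and for $\text{Res}_{z_1=z_2}$, substitute $z_2 = z_1 - u$ and expand $(z_1-u)^b$ in $u$. Each iterated residue vanishes unless the degree condition $a+b+c = -2$ holds, and under this condition reduces to a single signed binomial coefficient. The identity in question then becomes an algebraic relation among three such binomial coefficients, which I would derive from the negative-index identity $\binom{c}{m} = (-1)^m \binom{m-c-1}{m}$ together with the symmetry $\binom{c}{m} = \binom{c}{c-m}$ for non-negative $c$.

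The main obstacle will be the bookkeeping of support conditions: each of the three residues is non-zero only when the relevant binomial coefficient lies in its support, and these supports depend on the signs of $a$, $b$, $c$. A case analysis on the sign patterns of $(a,b,c)$ is needed, but once $a+b+c = -2$ is imposed the supports align consistently in every case and the identity drops out. Since the argument uses only formal binomial manipulations over $\mathbb{Z}$, the proof is characteristic-independent, consistent with the paper's broader theme.
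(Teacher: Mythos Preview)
Your proposal is correct and follows exactly the same approach as the paper. The paper's proof is a single sentence---``This can be very easily checked by brute force on $f=\frac{1}{z_1^a z_2^b (z_1-z_2)^c}$ for $a,b,c\in\mathbf{Z}_{\geq 0}$''---and you have simply fleshed out that brute-force verification: the reduction from $A_3$ to the two-variable subring, the further reduction to Laurent monomials by linearity and continuity, and the binomial-coefficient bookkeeping that remains.
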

\begin{proof}
This can be very easily checked by brute force on $f=\frac{1}{z_1^az_2^b(z_1-z_2)^c}$ for $a,b,c\in\mathbf{Z}_{\geq0}$. \end{proof}

\subsection{Global Residue}
We recall the definition of logarithmic differential forms along a reduced, normal crossing divisor $D$ of a smooth algebraic variety $S$ over $k$. We also construct the residue map and recall several of its basic properties. As $k$ is allowed to be an arbitrary field, we work over the algebraic (etale) topology. A nice survey of these classical facts and constructions may be found in the original source, Deligne Hodge II, or e.g \cite{EV92, S79}.

Since $D$ is normal crossing in $S$, we may cover $S$ via open affine $U$ such that 
\begin{itemize}
\item $U$ is etale over $\mathbf{A}_k^n$, via coordinates $x_1,\dots, x_n$.
\item $D\vert U$ is defined by an equation $x_1\dots x_s=0$ (i.e $D$ is the inverse image of the union of the first $s\leq n$ coordinate hyperplanes of $\mathbf{A}_k^n$).
\end{itemize}

\noindent Define $\Omega_S^\bullet(\text{log}D)$, the locally-free $\mathcal{O}_S$-module of logarithmic differential forms on $S$ along $D$, as the subsheaf of $j_*(\Omega_{S\setminus D}^\bullet)$, where $j:S\setminus D\hookrightarrow S$ is the inclusion, consisting (locally over $D\vert U=(h=0)$) of forms $\omega$ such that $h\omega,dh\wedge\omega\in\Omega_S^\bullet(U)$. Equivalently, we may specify that over each open affine $U$ as above, $\Omega_S^1(\text{log}D)(U)$ has basis $\frac{dx_1}{x_1},\dots,\frac{dx_s}{x_s},dx_{s+1},\dots, dx_n$, and define $\Omega^i_{S}(\text{log}D)=\wedge^i_{\mathcal{O}_S}(\Omega_S^1(\text{log}D))$.

Next we explain how to compute the residue of a logarithmic form. Suppose $p\in D$, and choose local coordinates $x_1,\dots, x_n$ of $D\vert U=(h_p=0)$ around $p$. Since $D$ is normal crossing, we may assume $h_p=x_1\cdots x_{s'}$ for $s'\leq s\leq n$, and that $(D,p) = (D_1,p)\cup\dots\cup (D_{s'},p)$ where $(D_i,p)=(x_i=0)$ are the irreducible components of the divisor $D$ locally around $p$. Using the above notation, suppose $\omega_p\in\Omega^1_{S,p}(\text{log}D)$, and consider a local expansion 
$$\omega_p = \sum_{i=1}^{s'}f_i\frac{dx_i}{x_i}+\sum_{i=s'+1}^n g_idx_i$$
where $f_i,g_i\in\mathcal{O}_{S,p}$. Define $\text{Res}_{D,p}$ to be the $\mathcal{O}_{S,p}$-linear map
\begin{equation}\label{Residue along NCD}
\text{Res}_{D,p}:\omega_p\mapsto (f_1\vert_{D_1,p},\dots, f_{s'}\vert_{D_{s'},p})\in\bigoplus_{i=1}^{s'}\mathcal{O}_{D_i,p}.
\end{equation}

That the residue is independent of choice of presentation follows from the ``generalized de Rham lemma'', which, as formulated in \cite{S76}, is characteristic independent. Further details may be found in e,g \cite{S79}. There is an exact sequence of coherent $\mathcal{O}_S$-modules:
\begin{equation}\label{residue short exact sequence}
0\rightarrow \Omega_S^1\rightarrow\Omega_S^1(\text{log}D)\xrightarrow{\text{Res}_D}\bigoplus_{j=1}^s(i_j)_*\mathcal{O}_{D_j}\rightarrow0
\end{equation}
where $i_j:D_j\hookrightarrow S$ is the inclusion. For differential forms of higher order, there is again a residue:
$$\Omega_S^p(\text{log}D)\xrightarrow{\text{Res}_D}\bigoplus_{j=1}^s(i_j)_*\Omega_{D_j}^{p-1}(\text{log}(D- D_j))$$
whose construction we now explain. Again, $\text{Res}_D$ is defined as the sum of residues $\text{Res}_{D_i}$ along each irreducible component $D_i$ of $D$. Working locally over the component $D_1=(x_1=0)$, we may write $\omega\in\Omega_S^p(\text{log}D)$ as $\omega=\frac{dx_1}{x_1}\wedge\xi+\eta$ where $\xi\in\Omega_S^{p-1}(\text{log}(D-D_1))$ and $\eta\in\Omega_S^p(\text{log}(D-D_1))$. Then we define 
$$\text{Res}_{D_1}(\frac{dx_1}{x_1}\wedge\xi+\eta):=\xi\vert_{D_1}\in\Omega_{D_1}^{p-1}(\text{log}(D-D_1)\vert_{D_1}).$$
This fits into an exact sequence
\begin{equation}\label{General residue exact sequence}
0\rightarrow\Omega_S^p(\text{log}(D-D_1))\rightarrow\Omega_S^p(\text{log}D)\xrightarrow{\text{Res}_{D_1}}(i_1)_*\Omega_{D_1}^{p-1}(\text{log}(D-D_1)\vert_{D_1})\rightarrow0.
\end{equation}
\noindent Finally, we cite a useful lemma whose proof may be found in \cite[Lemma 1.6]{V82}.

\begin{lemma}\label{pullback of NCD}
Let $f:X\rightarrow Y$ be a smooth map of algebraic varieties and suppose $D$ is a normal crossing divisor in $Y$. Suppose $\hat{D}:=(f^{-1}(D))_{\text{red}}$ is a normal-crossing divisor of $X$. Then $f^*(\Omega_Y(\text{log}(D)))\subset\Omega_X(\text{log}(\hat{D}))$
\end{lemma}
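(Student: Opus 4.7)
The statement is local on $X$, so I would establish the inclusion in étale local coordinates adapted to both divisors. Fix a point $x \in X$, set $y = f(x)$, and choose étale coordinates $y_1,\ldots,y_m$ on a neighborhood of $y$ such that $D = (y_1\cdots y_s = 0)$, with $D_i = (y_i = 0)$ the irreducible components through $y$. Since $f$ is smooth, the functions $f^*y_1,\ldots,f^*y_m$ can be completed to a system of étale coordinates $x_1,\ldots,x_n$ on a neighborhood of $x$, with $x_i = f^*y_i$ for $i \leq m$. Because $\hat D$ is a NCD, I also fix local equations $z_1,\ldots,z_r$ for the irreducible components $\hat D_1,\ldots,\hat D_r$ of $\hat D$ through $x$, chosen as part of an étale regular system of parameters at $x$.

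The crux of the proof is the factorization
\[
f^*y_i \;=\; u_i \prod_{k=1}^{r} z_k^{a_{i,k}}
\]
in the étale local ring at $x$, with $u_i$ a unit and $a_{i,k} \in \mathbf{Z}_{\geq 0}$. This holds because $f^*y_i$ scheme-theoretically defines $f^{-1}(D_i) = X \times_Y D_i$, which is smooth of codimension one since both $f$ and $D_i$ are smooth; its irreducible components through $x$ are therefore among the $\hat D_k$ (being components of $(f^{-1}D)_{\text{red}} = \hat D$). Unique factorization in the regular local ring at $x$ then yields the above monomial decomposition, with $u_i$ a unit because it vanishes on none of the $\hat D_k$.

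Given the factorization, compute on generators. Locally $\Omega_Y^1(\text{log}\,D)$ is free on $\tfrac{dy_1}{y_1},\ldots,\tfrac{dy_s}{y_s},dy_{s+1},\ldots,dy_m$. For $i \leq s$,
\[
f^*\!\left(\frac{dy_i}{y_i}\right) \;=\; \frac{d(f^*y_i)}{f^*y_i} \;=\; \frac{du_i}{u_i} \;+\; \sum_{k=1}^{r} a_{i,k}\,\frac{dz_k}{z_k} \;\in\; \Omega_X^1(\text{log}\,\hat D),
\]
since $\tfrac{du_i}{u_i}$ is regular and each $\tfrac{dz_k}{z_k}$ is a standard logarithmic generator. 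For $i > s$, $f^*(dy_i) = dx_i$ is regular. The case of higher-degree forms follows from $\Omega^p(\text{log}\,\cdot) = \wedge^p \Omega^1(\text{log}\,\cdot)$ and the compatibility of $f^*$ with $\wedge$. The main obstacle is the factorization step: the smoothness of $f$ together with the NCD hypothesis on $\hat D$ is precisely what is needed to write $f^*y_i$ as a unit times a monomial in the $z_k$; without either hypothesis, $f^*(\tfrac{dy_i}{y_i})$ could acquire non-logarithmic poles and the $d\log$ identity driving the computation would fail.
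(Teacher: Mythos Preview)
Your argument is correct. The paper does not supply its own proof of this lemma; it simply cites \cite[Lemma 1.6]{V82}, and your local computation is precisely the standard argument one finds there. One small simplification available to you: because $f^{-1}(D_i)$ is smooth (hence reduced and locally irreducible) of codimension one, at most one exponent $a_{i,k}$ in your factorization is nonzero and it equals $1$, so the general monomial form is harmless but not needed under the smoothness hypothesis on $f$.
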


\section{Jets on $\text{Bun}_G$}\label{Divided powers}

\subsection{Divided power algebras}
We recall the notion of divided power structure, following the exposition in \cite[Section 3]{BO}. Let $A$ be a commutative ring and $I$ an ideal. Then a \textit{divided power structure on $I$} means a collection of maps $\gamma_i:I\rightarrow A$, for all integers $i\geq 0$, satisfying: \begin{itemize}
\item For all $x\in I$, $\gamma_0(x)=1,\gamma_1(x)=x, \gamma_i(x)\in I$ if $i\geq 1$.
\item For $x,y\in I$, $\gamma_k(x+y)=\sum_{i+j=k}\gamma_i(x)\gamma_j(y)$.
\item For $\lambda\in A,x\in I, \gamma_k(\lambda z)=\lambda^k\gamma_k(x)$.
\item For $x\in I,\gamma_i(x)\gamma_j(x)=\binom{i+j}{i}\gamma_{i+j}(x)$.
\item $\gamma_p(\gamma_q(x))=\frac{(pq)!}{p!(q!)^p}\gamma_{pq}(x)$.
\end{itemize}
We will also say \textit{$(A,I,\gamma)$ is a P.D ring}. The motivation for divided power structure is to replace elements $\frac{x^n}{n!}$, which do not make sense in characteristic $p$, with elements $\gamma_n(x)$. The 5 axioms then capture all the properties we would like for $\gamma_n(x)$ to satisfy, were it to be $\frac{x^n}{n!}$. It is a theorem (\cite[Theorem 3.9]{BO}) that given any commutative $k$-algebra $A$ and an $A$-module $I$, there exists a P.D algebra, denoted $(\Gamma_A(I),\Gamma_A^+(I),\gamma)$, satisfying a natural universal property. We call $\Gamma_A(I)$ the divided power envelope of $I$ in $A$. There is a grading on $\Gamma_A(I)$ with $\Gamma_0(I)=A,\Gamma_1(I)=I,$ and $\Gamma_A^+(I)=\oplus_{i\geq 1}\Gamma_i(I)$. In particular, there is an $A$-linear map $I\rightarrow\Gamma_A(I)^+$.

Let us describe the \textit{P.D polynomial algebra.} Suppose $\{x_1,\cdots, x_n\}$ is an $A$-module basis for $I$. Denote $x^{[n]}=\gamma_n(x)\in\Gamma_n(I)$. Then
$$\{x^{[q]}:=x_1^{[q_1]}\dots x_n^{[q_n]}:\sum q_i=n, x_i\in I\}\;\;\text{ forms an A-module basis for }\Gamma_n(I)$$
and we denote $\Gamma_A(I):=A\langle x_1,\dots x_n\rangle$ and call it the P.D polynomial $A$-algebra. For example, let $V=k^n$ be a vector space and consider $A:=\text{Sym}(V):=TV/\langle a\otimes b-b\otimes a\rangle$, and $I=\text{Sym}(V)^+$. Then 
$$\Gamma_A(I)=k\langle x_1,\dots x_n\rangle \simeq \text{Sym}^{PD}(V):= \bigoplus_{i\geq 0} \big( V^{\otimes n}\big)^{S_n},$$
where the isomorphism is given by $x^{[q]}\mapsto \sum_{\omega\in O_q}\omega$ and $O_q$ is the orbit $S_{q_1+\dots+q_n}.x_1^{\otimes q_1}\otimes\dots\otimes x_n^{\otimes q_n}$.
One readily checks the natural pairing of $V,V^*$ induces the perfect pairing
$$\langle,\rangle:\; \text{Sym}^{PD}(V)\times \text{Sym}(V^*)\rightarrow k.$$

\subsection{Divided power jet spaces}

In this subsection, we recall the definition of divided power jet spaces, as done in \cite{BO}. Suppose $\mathcal{M}$ is a smooth scheme over $k$, and denote $\mathcal{O}:=\mathcal{O}_{\mathcal{M}},\; \mathcal{O}^e:=\mathcal{O}_{\mathcal{M}}\otimes\mathcal{O}_{\mathcal{M}}.$ Let $\Delta:\mathcal{M}\hookrightarrow \mathcal{M}\times \mathcal{M}$ be the diagonal embedding and $I$ the ideal sheaf of $\Delta$ in $\mathcal{M}\times\mathcal{M}$. Then $I=\mathcal{O}\langle a\otimes 1-1\otimes a\rangle$ is the kernel of the multiplication map $\mathcal{O}^e\rightarrow\mathcal{O}$. The usual \textit{sheaf of $n$-jets of functions on $\mathcal{M}$} (without divided-powers) is defined as  
\begin{equation}\label{usual jets}
J^n(\mathcal{M}):=\mathcal{O}^e/I^{n+1}.
\end{equation}
This is naturally a $\mathcal{O}^e$-module. The fiber over $P\in\mathcal{M}$ will be denoted $J^n_P(\mathcal{M})$ and is called the vector space of \textit{$n^{th}$-order infinitesimal jet-spaces of $P\in\mathcal{M}$}.
\noindent For example, $J^1(\mathcal{M})=\mathcal{O}_{\mathcal{M}}\oplus T_{\mathcal{M}}^*$ and $J^1_P(\mathcal{M})\simeq k\oplus T_P^*(\mathcal{M})$ is (scalars plus) the Zariski cotangent space. Considering $J^n(\mathcal{M})$ as a left $\mathcal{O}$-module, we define the  sheaf of \textit{Grothendieck differential operators of order at most $n$ on $\mathcal{M}$ } as
\begin{equation}\label{Grothendieck diffops}
\mathcal{D}^{Gr}(\mathcal{M})_n:=\mathcal{H}om_{\mathcal{O}}(J^n(\mathcal{M}),\mathcal{O}).
\end{equation}

Next, we explain the divided-power analogues. The divided power envelope of $I$ inside $\mathcal{O}^e$ is (etale) locally isomorphic to the P.D polynomial algebra:
\begin{equation} \label{smooth immersion divided powers}
\Gamma_{\mathcal{O}^e}(I)\simeq\mathcal{O}\langle \xi_1,\dots,\xi_n\rangle,
\end{equation}
 where $\xi_i=1\otimes x_i-x_i\otimes 1$ are the $\mathcal{O}$-basis of $I$, and $x_i$ are the local coordinates of $\mathcal{O}$. Let $(\Gamma_{\mathcal{O}^e}(I),\bar{I},\gamma)$ be the associated P.D. ring. Define the \textit{ $n^{th}$-order divided power neighborhood}, also called the sheaf of $n$-PD jets, by 
\begin{equation}\label{PD jets}
J^{n,PD}(\mathcal{M}):=\Gamma_{\mathcal{O}^e}(I)/\bar{I}^{n+1}.
\end{equation}
The fiber over $P\in\mathcal{M}$ is denoted $J^{n,PD}_P(\mathcal{M})$ and called the \textit{$n^{th}$-order infinitesimal divided-power jet-spaces of $P\in\mathcal{M}$}. Viewing $J^{n,PD}_P(\mathcal{M})$ as a left $\mathcal{O}$-module, we define the sheaf of P.D differential operators of order at most $n$, also called the sheaf of \textit{crystalline differential operators}, by
$$\mathcal{D}^{crys}(\mathcal{M})_n:=\mathcal{H}om_{\mathcal{O}}(J^{n,PD}(\mathcal{M}),\mathcal{O}).$$
\noindent By definition,
$$\langle,\rangle: \mathcal{D}^{crys}_P(\mathcal{M})_n\times J^{n,PD}_P(\mathcal{M})\rightarrow k\;\;\text{ is a perfect pairing.}$$
We caution that the pairing is \textit{not} given by evaluating a crystalline differential operator on a function with divided powers!

Recall in Equation \ref{smooth immersion divided powers} the $\mathcal{O}$- basis $\{\xi^{[q]}:=\xi_1^{[q_1]}\dots \xi_n^{[q_k]}:|q|\leq m\}$ of $J^{m,PD}(\mathcal{M})$. Let $D_q$ denote the corresponding dual basis of $\mathcal{D}^{crys}(\mathcal{M})$. The composition in $\mathcal{D}^{crys}(\mathcal{M})$ is defined as follows. Let
$$\delta:\mathcal{O}^e\rightarrow\mathcal{O}^e\otimes_{\mathcal{O}}\mathcal{O}^e,\;\; \xi\mapsto \xi\otimes\mathbf{1}+\mathbf{1}\otimes\xi\;\text{ if }\xi=1\otimes x-x\otimes 1, \mathbf{1}=1\otimes 1.$$
By universal property, this induces a map $\delta:\Gamma_{\mathcal{O}^e}(I)\rightarrow \Gamma_{\mathcal{O}^e}(I)\otimes_{\mathcal{O}} \Gamma_{\mathcal{O}^e}(I)$ which satisfies
$$\delta^{n,m}:J^{n+m,PD}(\mathcal{M})\rightarrow J^{n,PD}(\mathcal{M})\otimes_{\mathcal{O}} J^{m,PD}(\mathcal{M}),\;\; \xi^{[q]}\mapsto \sum_{i+j=q}\xi^{[i]}\otimes\xi^{[j]}.$$
\noindent Then define composition of $f\in\mathcal{D}^{crys}(\mathcal{M})_{n},g\in\mathcal{D}^{crys}(\mathcal{M})_{m}$ is defined by the formula
$$f\circ g: J^{n+m,PD}(\mathcal{M})\xrightarrow{\delta^{n,m}}J^{n,PD}(\mathcal{M})\otimes_{\mathcal{O}} J^{m,PD}(\mathcal{M})\xrightarrow{1\otimes f}J^{n,PD}(\mathcal{M})\xrightarrow{g} \mathcal{O}$$
It is immediate from this formula that we have the following relations inside $\mathcal{D}^{crys}(\mathcal{M})$:
\begin{equation}\label{crystalline relations}
D_{q}\circ D_{q'}=D_{q+q'}\;\;\;\text{ and } fD_q = \sum_{|i|+|j|=|q|}\binom{|i|+|j|}{|i|}D_i(f)D_j \text{ for }f\in J^{PD}(\mathcal{M})
\end{equation}

The \textit{universal enveloping algebra of the tangent Lie algebroid $\mathcal{T}_X$}, denoted $U_{\mathcal{O}_X}(\mathcal{T}_X)$, is defined as the $\mathcal{O}_X$ algebra generated by $\mathcal{O}_X$ and $\mathcal{T}_X$, subject to the relations 
$f\cdot \del= f\del, \del\cdot f-f\cdot \del = \del(f), \del\cdot\del'-\del'\cdot\del = [\del,\del'],\;\;\text{ for } f\in \mathcal{O}_X,\del,\del'\in\mathcal{T}_X.$ By Equation \ref{crystalline relations}, it is clear
\begin{equation}\label{crystalline = enveloping}
\mathcal{D}^{crys}(\mathcal{M})\xrightarrow{\sim} U_{\mathcal{O}_{\mathcal{M}}}(\mathcal{T}_{\mathcal{M}}),\;\; D_q\mapsto D_1^{q_1}\cdot D_2^{q_2}\cdots D_n^{q_n}
\end{equation}
is an isomorphism of $\mathcal{O}_{\mathcal{M}}$-algebras (see also \cite{BMR}).

\subsection{Pairing fibers of jets with the coinvariants}
As a starting point, we cite the uniformization theorem for $\text{Bun}_G$. Recall $O_x\simeq k[[t_x]], K_x\simeq k((t_x))$, and denote $\mathcal{O}_{out}:=\mathcal{O}_X\vert_{X\setminus x}$.
\begin{thm}(Uniformization Theorem)
Let $G$ be a semisimple reductive group over $k$, $X$ a smooth projective curve over $k$, and $x\in X$ a closed point. Then there is an isomorphism of stacks 
$$\text{Bun}_{G}(X)=G(\mathcal{O}_{out})\backslash G(K_x)/G(O_x).$$
\end{thm}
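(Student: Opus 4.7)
The plan is to present the double coset space on the right as a concrete model for the stack $\text{Bun}_G(X)$ via explicit gluing. To each $g\in G(K_x)$ I would associate the $G$-bundle $P_g$ on $X$ obtained by gluing the trivial bundle on $X\setminus x$ to the trivial bundle on $D_x$ using $g$ as the transition function over the punctured formal disk $D_x^{\times}$. That this gluing datum descends to a genuine $G$-bundle on $X$ is precisely the Beauville--Laszlo theorem in its $G$-bundle formulation: the category of $G$-bundles on $X$ is equivalent to the category of triples consisting of a $G$-bundle on $X\setminus x$, a $G$-bundle on $D_x$, and an isomorphism between their restrictions to $D_x^{\times}$. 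The left action of $G(\mathcal{O}_{out})$ and the right action of $G(O_x)$ on $G(K_x)$ correspond respectively to changes of trivialization on $X\setminus x$ and on $D_x$, so the assignment $g\mapsto P_g$ factors through the double coset and defines a map from the quotient groupoid into $\text{Bun}_G(X)$.

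To show this map is an equivalence, I would establish essential surjectivity by producing, for every $G$-bundle $P$ on $X$, trivializations over $X\setminus x$ and over $D_x$; comparing these over $D_x^{\times}$ then yields an element of $G(K_x)$ mapping to $P$. Triviality over $D_x$ follows from Hensel's lemma together with smoothness of $G$: since $O_x$ is a complete local Noetherian ring with algebraically closed residue field, one trivializes $P$ at the closed point and lifts inductively through the infinitesimal neighborhoods $\text{Spec}(O_x/\mathfrak{m}_x^n)$. Triviality over $X\setminus x$ is the deeper input: this is the theorem of Drinfeld--Simpson, which generalizes Harder's result for simply connected $G$, asserting that every $G$-bundle on the affine curve $X\setminus x$ is trivial when $G$ is semisimple and $k$ is algebraically closed, in arbitrary characteristic.

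For fullness and faithfulness I would verify that two elements $g,g'\in G(K_x)$ give isomorphic bundles precisely when $g'=hgk^{-1}$ for some pair $(h,k)\in G(\mathcal{O}_{out})\times G(O_x)$: any isomorphism $P_g\xrightarrow{\sim} P_{g'}$ restricts to automorphisms of the trivial bundles on $X\setminus x$ and $D_x$, i.e., to elements $h\in G(\mathcal{O}_{out})$ and $k\in G(O_x)$, whose compatibility on $D_x^{\times}$ is exactly the required relation; conversely any such pair visibly produces an isomorphism. The principal obstacle in this program is the Drinfeld--Simpson theorem itself, whose proof in positive characteristic relies on a delicate induction using the structure theory of parabolic subgroups and the Bruhat decomposition to reduce to the case of a Borel and then to a torus; by contrast, Beauville--Laszlo gluing and the Hensel lifting on the formal disk are essentially formal.
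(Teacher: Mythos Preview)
The paper does not prove this statement at all; it merely cites \cite[Theorem 5.1.1]{Sor99} and moves on. Your outline is essentially the standard argument that one finds there (and that goes back to Beauville--Laszlo for the gluing and to Drinfeld--Simpson for triviality on the complement), so there is nothing to compare against and your approach is the expected one.

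One remark worth making: as written, your argument establishes the equivalence on $k$-points only. The assertion in the theorem is an isomorphism of \emph{stacks}, so one must run the same argument in families over an arbitrary test scheme $S$. Beauville--Laszlo gluing works verbatim in families, and the formal-disk trivialization is again formal smoothness plus completeness. The genuine content is that Drinfeld--Simpson, in its family form, only guarantees that a $G$-bundle on $(X\setminus x)\times S$ becomes trivial \'etale-locally on $S$; correspondingly the double quotient $G(\mathcal{O}_{out})\backslash G(K_x)/G(O_x)$ has to be read as the \'etale (or fppf) quotient stack, not as a naive set-theoretic quotient. With that interpretation your sketch goes through.
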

\noindent A proof of the uniformization theorem, as formulated, may be found in \cite[Theorem 5.1.1]{Sor99}. We will use the uniformization theorem to produce an explicit description of the infinitesimal jet spaces on the underlying smooth locus $\mathcal{M}$ of $\text{Bun}_G$. In particular, we only consider $P\in\text{Bun}_G$ whose associated adjoint bundle $\mathfrak{g}_P$ has no global sections: $H^0(X,\mathfrak{g}_P)=0$.

Let 
$$\mathfrak{g}_K:=\mathfrak{g}\otimes K,\mathfrak{g}_O:=\mathfrak{g}\otimes O.$$
Under the uniformization map, we may associate to $P$ the triple $(\tau_{X\setminus x},\tau_{D_x},\phi)$, where 
$$\tau_{D_x}: P\vert_{D_x}\xrightarrow{\sim}G\times D_x,\;\;\; \tau_{X\setminus x}:P\vert_{X\setminus x}\xrightarrow{\sim}G\times(X\setminus x)$$
are trivializations and $\phi\in G(K)$ is the transition function on the overlap $D_{x}^\times$. This means the composition of maps
\begin{equation}\label{trivialization}
\mathfrak{g}_{\text{out}}   \xrightarrow{\tau_{X\setminus x}^{-1}} \Gamma(X\setminus x,\mathfrak{g}_P)\xrightarrow{\text{Restrict}}\Gamma(D_x^\times,\mathfrak{g}_P)\xrightarrow{\tau_{D_x}}\mathfrak{g}_K
\end{equation}
has image landing in $\text{Ad}_\phi\mathfrak{g}_O\subset\mathfrak{g}_K$. 

\begin{definition}\label{vacuum module} Let $\phi\in G(K)$. The ``vacuum module with central charge 0'' is
$$\mathbf{M}_{\phi}:= U(\mathfrak{g}_K)/{\text{Ad}_{\phi}\mathfrak{g}_O\cdot U(\mathfrak{g}_K)}.$$
The ``space of coinvariants'' is
$$\mathbf{M}^{out}_{\phi}:=\mathbf{M}/\mathbf{M}\mathfrak{g}_{out},$$
 where $\mathbf{M}_\phi$ is viewed as a $\mathfrak{g}_{out}$-module under the embedding $\mathfrak{g}_{out}\hookrightarrow\mathfrak{g}_K$ in Equation \ref{trivialization}.
 \end{definition}
The PBW filtration on $U(\mathfrak{g}_K)$ induces a PBW filtration on $\mathbf{M}_{\phi}^{out}$, where the nth order filtration is denoted $\mathbf{M}_{\phi,n}^{out}$.  
 Explicitly, we find
\begin{align*}
\mathbf{M}_{\phi,n}&:=\text{span}\{\xi_1\dots\xi_r:r\leq n, \xi_i\in\mathfrak{g}_K\}/\text{span}\{\xi_1\dots\xi_r:r\leq n,\xi_i\in\mathfrak{g}_K,\xi_1\in\phi\mathfrak{g}_O\phi^{-1}\}\\
\mathbf{M}_{\phi,n}^{out}&:=\text{span}\{\xi_1\dots\xi_r:r\leq n, \xi_i\in\mathfrak{g}\}/\text{span}\{\xi_1\dots\xi_r:r\leq n,\xi_i\in\mathfrak{g}_K,\xi_1\in\phi\mathfrak{g}_O\phi^{-1}\;\text{ or }\; \xi_r\in\mathfrak{g}_{out}\}
\end{align*}

\begin{lemma}
There is an isomorphism of vector spaces between the fiber of crystalline differential operators on $\mathcal{M}$ and the space of coinvariants:
$$\mathcal{D}^{crys}_P(\mathcal{M})\simeq \mathbf{M}^{out}$$
\end{lemma}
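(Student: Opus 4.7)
The plan is to use the uniformization theorem to transfer the question about $\mathcal{D}^{crys}_P(\mathcal{M})$ into one about $U(\mathfrak{g}_K)$: construct a natural map $\Psi\colon U(\mathfrak{g}_K) \to \mathcal{D}^{crys}_P(\mathcal{M})$ via infinitesimal actions, show it descends to $\mathbf{M}^{out}$, and check the descended map is an isomorphism by comparing associated gradeds. First, by (3.8), $\mathcal{D}^{crys}(\mathcal{M}) \simeq U_{\mathcal{O}_\mathcal{M}}(\mathcal{T}_\mathcal{M})$, so it suffices to realize $\mathfrak{g}_K$ as vector fields on a neighborhood $U$ of $P$ in $\mathcal{M}$. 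Via the uniformization $\mathcal{M} \simeq G(\mathcal{O}_{out})\backslash G(K_x)/G(O_x)$ near $P = [\phi]$, the left-multiplication action of $\mathfrak{g}_K$ on $G(K_x)$ descends through both group quotients to a Lie algebra map $\mathfrak{g}_K \to \Gamma(U,\mathcal{T}_\mathcal{M})$. Extending by the universal property of $U(\mathfrak{g}_K)$ to $U(\mathfrak{g}_K) \to \Gamma(U, U_{\mathcal{O}}(\mathcal{T}_\mathcal{M})) = \Gamma(U, \mathcal{D}^{crys}_\mathcal{M})$ and taking the fiber at $P$ gives $\Psi$.

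Next I would check that $\Psi$ kills the relations defining $\mathbf{M}^{out}$. For the left ideal $\text{Ad}_\phi\mathfrak{g}_O \cdot U(\mathfrak{g}_K)$: each $h \in \text{Ad}_\phi\mathfrak{g}_O$ generates the tangent direction of the right $G(O_x)$-orbit through $\phi$, which is annihilated by $G(K_x) \twoheadrightarrow G(K_x)/G(O_x)$, so $\Psi(h)$ is a vector field on $\mathcal{M}$ with $\Psi(h)(P) = 0$. Composing on the left with a vector field vanishing at $P$ annihilates any subsequent differential operator upon evaluation at $P$, so $\Psi(h\cdot u)$ has zero fiber in $\mathcal{D}^{crys}_P(\mathcal{M})$. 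For the right ideal $U(\mathfrak{g}_K)\cdot \mathfrak{g}_{out}$: each $Y \in \mathfrak{g}_{out}$ generates the infinitesimal left $G(\mathcal{O}_{out})$-action on $\mathcal{M}$, which is identically trivial by the double-coset quotient, so $\Psi(Y)$ is the zero vector field and any composition $\Psi(u)\Psi(Y)$ is the zero operator. Hence $\Psi$ descends to $\overline{\Psi}\colon \mathbf{M}^{out} \to \mathcal{D}^{crys}_P(\mathcal{M})$.

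Finally I would show $\overline{\Psi}$ is an isomorphism by a filtered PBW comparison. Choose an ordered $k$-basis of $\mathfrak{g}_K$ in which basis elements of $\text{Ad}_\phi\mathfrak{g}_O$ come first, a complement $W$ to $\text{Ad}_\phi\mathfrak{g}_O + \mathfrak{g}_{out}$ comes next, and the image of $\mathfrak{g}_{out}$ comes last; this ordering exists because $H^0(X,\mathfrak{g}_P)=0$ forces $\text{Ad}_\phi\mathfrak{g}_O \cap \mathfrak{g}_{out} = 0$, with $W \simeq T_P\mathcal{M}$ via the uniformization. In this basis, the ordered PBW monomials in the left ideal begin with an $\text{Ad}_\phi\mathfrak{g}_O$-factor and those in the right ideal end with a $\mathfrak{g}_{out}$-factor, so $\text{gr}\,\mathbf{M}^{out} \simeq \text{Sym}(W) = \text{Sym}(T_P\mathcal{M})$. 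On the other side, the perfect pairing with $J^{n,PD}_P(\mathcal{M}) \simeq \text{Sym}^{PD}(T_P^*\mathcal{M})$ gives $\text{gr}\,\mathcal{D}^{crys}_P(\mathcal{M}) \simeq \text{Sym}(T_P\mathcal{M})$. The induced graded map is the identity on generators $W \simeq T_P\mathcal{M}$ by construction, hence the identity on all of $\text{Sym}(T_P\mathcal{M})$, and so $\overline{\Psi}$ itself is an isomorphism.

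The main obstacle is making the descent of the $\mathfrak{g}_K$-action from $G(K_x)$ through the ind-scheme quotient $G(K_x)/G(O_x)$ and the stack quotient by $G(\mathcal{O}_{out})$ fully rigorous, and in particular verifying that the $\mathfrak{g}_{out}$-action is \emph{identically} zero on $\mathcal{M}$ rather than merely vanishing at $P$—only this stronger statement allows us to kill right compositions by $\mathfrak{g}_{out}$. The hypothesis $H^0(X,\mathfrak{g}_P)=0$ is essential: it places $P$ in the smooth locus and ensures $T_P\mathcal{M} \simeq \mathfrak{g}_K/(\text{Ad}_\phi\mathfrak{g}_O + \mathfrak{g}_{out})$ with no $H^0$-obstruction, matching the dimensions needed for the graded comparison.
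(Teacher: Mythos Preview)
Your proposal is correct and follows essentially the same approach as the paper: use the identification $\mathcal{D}^{crys}(\mathcal{M})\simeq U_{\mathcal{O}_\mathcal{M}}(\mathcal{T}_\mathcal{M})$, invoke uniformization to get $T_P\mathcal{M}\simeq \mathfrak{g}_K/(\mathrm{Ad}_\phi\mathfrak{g}_O+\mathfrak{g}_{\mathrm{out}})$, and conclude by comparing associated gradeds. The paper's proof is terser---it simply asserts that the fiber of $U_{\mathcal{O}_\mathcal{M}}(\mathcal{T}_\mathcal{M})$ at $P$ is $U(T_P\mathcal{M})$ and jumps straight to the PBW comparison---whereas you take the extra care to build an explicit map $U(\mathfrak{g}_K)\to\mathcal{D}^{crys}_P(\mathcal{M})$ and verify it kills both defining ideals; this is more work than strictly necessary for a vector-space isomorphism but makes the filtered structure transparent.
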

\begin{proof}
By \ref{crystalline = enveloping}, $\mathcal{D}^{crys}(\mathcal{M})\simeq U_{\mathcal{O}_M}(\mathcal{T}_\mathcal{M})$. Thus the fiber over $P\in\mathcal{M}$ is just $U(T_P(\mathcal{M}))$, the enveloping algebra of the tangent space $T_P(\mathcal{M})$. It is well known, e.g by using the uniformization theorem for $\text{Bun}_G$, that $T_P\mathcal{M}$ is isomorphic to $\mathfrak{g}_K\big/(\text{Ad}_\phi\mathfrak{g}_O+\mathfrak{g}_{\text{out}}\big)$. The claim $U(T_P(\mathcal{M}))\simeq\mathbf{M}^{out}$ follows by comparing the associated graded of both sides.
\end{proof}
  \begin{cor}\label{jets and differential operators}
The natural pairing of crystalline differential operators with divided-power jets induces a perfect pairing
 $$\phi:\mathbf{M}_{\phi,n}^{out}\times J^{n,PD}_P(\mathcal{M})\rightarrow k.$$
 \end{cor}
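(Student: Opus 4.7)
The plan is to assemble the corollary from two filtered identifications that are essentially already in hand. By the very definition in \eqref{Grothendieck diffops} (or rather its divided-power analogue), $\mathcal{D}^{crys}(\mathcal{M})_n=\mathcal{H}om_{\mathcal{O}}(J^{n,PD}(\mathcal{M}),\mathcal{O})$, so taking fibers at the smooth point $P\in\mathcal{M}$ yields a tautological perfect pairing
$$\mathcal{D}^{crys}_P(\mathcal{M})_n\times J^{n,PD}_P(\mathcal{M})\rightarrow k.$$
The task is therefore to identify $\mathcal{D}^{crys}_P(\mathcal{M})_n$ with $\mathbf{M}_{\phi,n}^{out}$ as filtered vector spaces, which turns the above into the pairing claimed in the corollary.

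First I would record that the isomorphism \eqref{crystalline = enveloping}, $\mathcal{D}^{crys}(\mathcal{M})\simeq U_{\mathcal{O}_{\mathcal{M}}}(\mathcal{T}_{\mathcal{M}})$, sends $D_q\mapsto D_1^{q_1}\cdots D_n^{q_n}$, so the order filtration $\mathcal{D}^{crys}(\mathcal{M})_n$ (elements killing $\bar I^{n+1}$) matches the PBW filtration $U_{\mathcal{O}_{\mathcal{M}}}(\mathcal{T}_{\mathcal{M}})_{\leq n}$. Restricting to the fiber at $P$ gives $\mathcal{D}^{crys}_P(\mathcal{M})_n\simeq U(T_P\mathcal{M})_{\leq n}$.

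Next I would upgrade the previous lemma's unfiltered isomorphism $U(T_P\mathcal{M})\simeq \mathbf{M}^{out}$ to a filtered one. This amounts to refining the argument already sketched there: the hypothesis $H^0(X,\mathfrak{g}_P)=0$ ensures that the composition in \eqref{trivialization} embeds $\mathfrak{g}_{out}\hookrightarrow\mathfrak{g}_K$ injectively with image transverse to $\operatorname{Ad}_\phi\mathfrak{g}_O$, so the natural map $\mathfrak{g}_K\twoheadrightarrow T_P\mathcal{M}$ identifies $T_P\mathcal{M}$ with $\mathfrak{g}_K/(\operatorname{Ad}_\phi\mathfrak{g}_O+\mathfrak{g}_{out})$. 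Tensoring, one obtains a surjection $U(\mathfrak{g}_K)\twoheadrightarrow U(T_P\mathcal{M})$ whose kernel is precisely the two-sided ideal generated by $\operatorname{Ad}_\phi\mathfrak{g}_O$ and $\mathfrak{g}_{out}$ — that is, exactly the relations defining $\mathbf{M}^{out}$. Passing to the PBW filtration on both sides, the associated graded is $\operatorname{Sym}(\mathfrak{g}_K)/(\operatorname{Sym}(\mathfrak{g}_K)\cdot\operatorname{Ad}_\phi\mathfrak{g}_O+\operatorname{Sym}(\mathfrak{g}_K)\cdot\mathfrak{g}_{out})\simeq\operatorname{Sym}(T_P\mathcal{M})$, confirming that the PBW piece $\mathbf{M}^{out}_{\phi,n}$ matches $U(T_P\mathcal{M})_{\leq n}$.

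Chaining the two filtered isomorphisms produces $\mathcal{D}^{crys}_P(\mathcal{M})_n\simeq \mathbf{M}^{out}_{\phi,n}$, and substituting into the tautological pairing yields the perfect pairing claimed. The main obstacle, and the only real content beyond bookkeeping, is verifying the filtration match in the second step; once one checks that the gradeds agree (a PBW-style computation using $H^0(X,\mathfrak{g}_P)=0$), everything else is formal.
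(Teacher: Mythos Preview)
Your proposal is correct and follows the paper's approach exactly: the corollary is stated without proof in the paper, as an immediate consequence of the preceding lemma ($\mathcal{D}^{crys}_P(\mathcal{M})\simeq\mathbf{M}^{out}$, proved there by comparing associated gradeds) combined with the definitional perfect pairing $\mathcal{D}^{crys}_P(\mathcal{M})_n\times J^{n,PD}_P(\mathcal{M})\to k$. Your write-up simply makes the filtration bookkeeping explicit, which the paper leaves to the reader.
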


\section{The ``universal sheaf'' of log-differential forms}\label{universal sheaf}
Fix a closed point $x\in X$. Following the notation of subsection \ref{Local Residue}, we consider the $n$-fold formal, resp. punctured disk on $X^n$:
$$(D_x)^n:=\text{Spec}((\widehat{\mathcal{O}}_{X,x})^{\hat{\otimes} n}), \;\;\;\;\; (D_x^\times)^n:=\text{Spec}(K(\widehat{\mathcal{O}}_{X,x}^{\hat{\otimes} n})).$$
So, after choosing uniformizers, we may identify
$$\Gamma((D_x)^n,\mathcal{O}_{X^n})=k[[z_1,\dots, z_n]],\;\;\; \Gamma((D_x^\times)^n,\mathcal{O}_{X^n})=\text{Frac}(k[[z_1,\dots,z_n]]).$$
In higher variables, the trivialization (\ref{trivialization}) reads
$$\tau_{D_x}:\Gamma(D_x^n,\mathfrak{g}_P^{\boxtimes n})\xrightarrow{\sim} \mathfrak{g}^{\otimes n}\otimes k[[z_1,\dots, z_n]] = (\mathfrak{g}_O)^{\hat{\otimes} n}.$$\;\;\;  
Let $S_n$ denote the symmetric group of $n$ letters. For $\sigma\in S_n$, let
$$D_\sigma=\bigcup_{i=1}^{n-1}\{x_{\sigma(i)}=x_{\sigma(i+1)}\}\subset X^n.$$
 Observe each $D_\sigma$ is a normal crossing divisor, but the full diagonal divisor $D:=\bigcup_{\sigma\in S_n}D_\sigma$ is not as soon as $n>2$. Let $\Omega_{X^n}(\text{log}(D_\sigma))$ denote the sheaf of top degree log-differential forms on $X^n$ with simple poles along $D_\sigma$, as defined in Section 2. From here on, we always mean top-degree forms on a smooth scheme $S$ when we write $\Omega_S$. 

\begin{definition}\label{log forms on almost NCD} Let $j:X^n\setminus D\hookrightarrow X^n$ be the inclusion. Define \footnote{To avoid overcount, we can index the sum over $S_n/\tau$, where $\tau:i\mapsto n-i-1$ is the Type $A_n$ Dynkin automorphism.}
$$\tilde{\Omega}_{X^n}(\text{log}D):=\sum_{\sigma\in S_n}\Omega_{X^n}(\text{log}D_\sigma)\subset j_*(\Omega_{X^n\setminus D})$$
\end{definition}

\noindent Thus, restriction to $(D_x^{\times})^n$ and then trivializing about $\tau_{D_x}$ produces a map
$$\Gamma(X^n,(\mathfrak{g}_P^*)^{\boxtimes n}\otimes\tilde{\Omega}_{X^n}(\text{log}D))\hookrightarrow (\mathfrak{g}^*)^{\otimes n}\otimes k[[z_1,\dots,z_n]][(z_j-z_k)^{-1}]dz_1\cdots dz_n,\;\;\; \omega_n\mapsto\omega_n\vert_{(D_x^{\times})^n}.$$
\noindent By Equation \ref{trivialization}, the image lands in
\begin{equation}\label{image of trivialization}\omega_n\vert_{(D_x^\times)^n}\in(\text{Ad}_\phi\mathfrak{g}^*_O)^{\hat{\otimes} n}[(z_i-z_j)^{-1}, i,j\leq n]dz_1\cdots dz_n.\end{equation}
Similarly, for $\omega_n\in\Gamma(D_x^n,(\mathfrak{g}_P^*)^{\boxtimes n}\otimes\tilde{\Omega}_{X^n}(\text{log}D))$, we denote by $\omega_n\vert_{(D_x^\times)^n}$ to be the restriction to the formal punctured disk, followed by the $\tau_{D_x}$-trivialization. This still has image as in Equation \ref{image of trivialization}.

So in what follows, we study properties of log-differential forms on $X^n$ by studying their corresponding power-series expansion. 
We are now ready to define the main object:
\subsection{The universal space controlling jets}
\begin{definition}\label{The universal space} Define the ``universal space'' $\Omega_n$ over $X$ as follows. Over an open $U\subset X$, define the sections of $\Omega_n(U)$ to consist of $(n+1)-$ tuples
$$(\omega_0,\dots,\omega_n)$$
where $\omega_0\in k$ and $\omega_i\in\Gamma(U^i,(\mathfrak{g}_P^*)^{\boxtimes i}\otimes \tilde{\Omega}_{X^i}(\text{log}D))^{-S_i}, i>0$ are such that, after
restricting to $(D_x)^i$, 
$$\omega_i\vert_{(D_x)^i}\in \bigg((\mathfrak{g}^*)^{\otimes i}\otimes k[[z_1,\dots, z_i]][(z_j-z_k)^{-1}]\bigg)^{-S_i}dz_1\dots dz_i$$
and $\omega_i\vert_{(D_x)^i}$ has the expansion in the $z_{i-1}-z_i$ direction (as defined in Definition \ref{formal residue}):
\begin{equation}\label{residue constraint}
\omega_i\vert_{(D_x)^i}=\frac{\Phi_i^*(\omega_{i-1}\vert_{(D_x)^{i-1}})}{z_{i-1}-z_i}dz_i+\text{reg}.
\end{equation}
Here, $(-)^{-S_i}$ denotes the $S_i$-anti-invariants and $\Phi_i^*$ is the map induced by the Lie-bracket:
$$\Phi_i: \mathfrak{g}^{\otimes i} \rightarrow \mathfrak{g}^{\otimes i-1},\;\;\; \xi_1\otimes\cdots\otimes \xi_i\mapsto\xi_1\otimes\cdots\otimes [\xi_{i-1},\xi_{i}].$$
And, ``reg'' stands for an expression whose expansion in the $z_{i-1}-z_i$ direction has no $(z_{i-1}-z_i)^{d}$ terms when $d<0$.
\end{definition}

\begin{remark}\label{justify NCD log forms}
Let us explain the reason for using $\tilde{\Omega}_{X^n}(\text{log}D)$ in defining the space of sections $\Omega_n(U)$ instead of the more natural $\Omega_{X^n}(\text{log}D)$.

Following \cite{S79}, it is possible to generalize the notion of logarithmic differential forms and their residue along divisors which are not normal crossing. The main difference is the residue in Equation \ref{residue short exact sequence} lands in rational sections of the (normalization of the) divisor instead a regular sections. In our case, the residue constraint \ref{residue constraint} ensure this does not happen:

Indeed, suppose $\omega=(\omega_i)_{0\leq i\leq n}\in\Omega_n(U)$ satisfies all the conditions of Definition \ref{The universal space} except with $\Omega_{X^n}(\text{log}D)$ replacing $\tilde{\Omega}_{X^n}(\text{log}D)$. Suppose there was a term of $\omega_n\vert_{((D_x)^{\times})^n}$ with a ``loop'' in the denominator: $z_{i_1i_2}z_{i_2i_3}\dots z_{i_{m}i_1}$. Then the iterated residue
$$\tilde{\omega}_{n-m+2}:=\text{Res}_{z_{i_1i_{2}}}\circ\text{Res}_{z_{i_{2}i_{3}}}\circ\dots\circ\text{Res}_{z_{i_{m-2},i_{m-1}}}(\omega_n)$$
creates an order 2-pole for $\tilde{\omega}_{n-m+2}$ at $z_{i_1i_{m}}$.  For example, if $m=3$, $\text{Res}_{z_1=z_2}\frac{1}{z_{12}z_{23}z_{31}} = \frac{-1}{z_{13}^2}.$ 

But the composition of Lie bracket maps, $\Phi_T^*$ (see \ref{Phi_T^* plain}), is $k((z_1,\dots,z_n))$-linear. Hence\linebreak $(\Phi_{T}^*)^{-1}(\tilde{\omega}_{n-m+2})=\omega_{n-m+2}$ should still have log poles on the diagonals. Contradiction. Thus, there exist no ``loops'' in $\omega_n$, and we conclude if $z_{i_1i_2}\dots z_{i_{m-1}i_m}$ is a denominator of $\omega_n\vert_{(D_x^{\times})^n}$, then $m\leq n$ and $i_j$ are all distinct. So the two options produce the same definition for $\Omega_n(U)$.
\end{remark}

We will only consider  the ``universal space'' $\Omega_n$ on $U=D_x^\times, D_x, X$. In these cases, we prove $\Omega_n(U)$ is naturally isomorphic to the infinitesimal jet spaces of $G(K), \text{Gr}_G, \text{Bun}_G$, respectively. As defined, the sheaf $\Omega_n(X)$ depends on $x\in X$ since we check the residue condition (Equation \ref{residue constraint}) locally around the formal disk about $x$. As a consequence of Theorem \ref{differential operators and forms on Sigma} or Subsection \ref{2 point uniformization}, this definition is in fact independent of the choice of point on the curve.

Let us now formulate the main results of \cite{BD} as Proposition \ref{differential operators and forms on K} and Proposition \ref{differential operators and forms on K/O}.

\begin{prop}\label{differential operators and forms on K}\cite{BD}
Let $k$ be an arbitrary field. Then there is a perfect pairing 
$$\Phi_K: U^{\leq n}(\mathfrak{g}_K)\times \Omega_n(D_x^{\times})\rightarrow k$$
induced by the pairing: given $\xi_1,\dots,\xi_k\in \mathfrak{g}_K$ and $\omega:=(\omega_0,\dots,\omega_n)$, 
\begin{equation}\label{residue pairing on K}\phi_K(\xi_1\dots\xi_k,\omega):= \text{Res}_{z_1=0}\dots\text{Res}_{z_k=0}\langle \xi_1(z_1)\otimes\dots\otimes\xi_k(z_k),\omega_k\rangle
\end{equation}
where $ \xi_1(z_1)\otimes\dots\otimes\xi_k(z_k)\in(\mathfrak{g}_K)^{\hat{\otimes} i}$ and $\omega_k$ is a differential form with values in $(\mathfrak{g}^*)^{\otimes k}.$ So, $\langle,\rangle$ is a scalar-valued differential form. And, $\text{Res}_{z_1=0}\dots\text{Res}_{z_k=0}$ means we first compute the residue $z_k=0$ and treat $z_1,\dots, z_{k-1}$ as scalars, etc, as explained in Definition  \ref{formal residue}.
\end{prop}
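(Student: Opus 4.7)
The plan is to define the pairing $\phi_K$ first on the tensor algebra $T^{\le n}(\mathfrak{g}_K)$ by the same iterated-residue formula, verify that it descends to $U^{\le n}(\mathfrak{g}_K)$, and finally establish non-degeneracy. For a fixed $\omega=(\omega_0,\dots,\omega_n)\in\Omega_n(D_x^\times)$, the formula produces a well-defined $k$-linear functional on the tensor algebra because, after trivializing $\omega_k$ via $\tau_{D_x}$ on $(D_x^\times)^k$, one obtains a top form with at worst log poles along the diagonals and finite-order poles at the origin, whose formal Laurent expansion in $z_1,\dots,z_k$ has only finitely many terms contributing to the iterated residue.

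The heart of the argument is the descent to $U^{\le n}(\mathfrak{g}_K)$, which amounts to the relation
$$\phi_K(\cdots\xi_i\xi_{i+1}\cdots,\omega)-\phi_K(\cdots\xi_{i+1}\xi_i\cdots,\omega)=\phi_K(\cdots[\xi_i,\xi_{i+1}]\cdots,\omega)$$
for each adjacent pair. Using the $S_k$-anti-invariance of $\omega_k$ --- which permutes the tensor slots jointly with the variables $z_i$ up to sign --- the relation in position $(i,i+1)$ can be transported to position $(k-1,k)$. There, the two terms on the left differ precisely by interchanging the innermost residues $\text{Res}_{z_{k-1}=0}$ and $\text{Res}_{z_k=0}$. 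Parshin's Lemma~\ref{Parshin} converts this interchange into an extra iterated residue $-\text{Res}_{z_{k-1}=0}\,\text{Res}_{z_{k-1}=z_k}$ applied to the integrand; the inner residue along the diagonal extracts, by the residue constraint~\ref{residue constraint}, the form $\Phi_k^*(\omega_{k-1})$. Contracting with $\xi_{k-1}(z_{k-1})\otimes\xi_k(z_k)$ in the last two $\mathfrak{g}^*$-slots produces exactly $[\xi_{k-1},\xi_k](z_{k-1})$ paired with $\omega_{k-1}$, matching the right-hand side.

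For non-degeneracy, I would induct on $n$ using the PBW filtration on $U(\mathfrak{g}_K)$ and the filtration on $\Omega_n(D_x^\times)$ obtained by truncating tuples $(\omega_0,\dots,\omega_n)$. The associated graded $U^{\le n}/U^{\le n-1}\simeq \Sym^n(\mathfrak{g}_K)$ pairs with the quotient $\Omega_n/\Omega_{n-1}$, which consists of tuples with $\omega_i=0$ for $i<n$; on these the residue constraint forces $\omega_n$ to be regular along the diagonals, so $\omega_n$ is simply an $S_n$-anti-invariant $(\mathfrak{g}^*)^{\otimes n}$-valued top form on $(D_x^\times)^n$. The induced pairing on the graded piece is the iterated residue pairing between symmetric tensors and their symmetric dual, which is perfect because the local residue pairing $K_x\times K_x\,dz\to k$ is perfect on each slot. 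The five-lemma then upgrades perfectness of the graded pieces to perfectness of $\phi_K$.

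The main obstacle I anticipate is the sign bookkeeping in the descent step: properly reconciling the anti-invariance of $\omega_k$, the orientation of $dz_1\wedge\cdots\wedge dz_k$, the prescribed order of iterated residues, and the sign on the right-hand side of Parshin's identity. Once these conventions are fixed, the proof collapses to the two clean inputs --- Parshin's Lemma and the residue constraint --- delivering the Lie bracket in precisely the right slot of $\omega_{k-1}$.
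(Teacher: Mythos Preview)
Your descent step---using Parshin's Lemma together with the residue constraint to pass from the tensor algebra to $U^{\le n}(\mathfrak g_K)$---is exactly what the paper does, and your account of how $S_n$-anti-invariance reduces the check to the last two slots is correct.

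For non-degeneracy you take a genuinely different route. The paper does \emph{not} argue by filtration; instead it writes down an explicit candidate inverse
\[
\psi_K:\ (U^{\le n}\mathfrak g_K)^*\longrightarrow \Omega_n(D_x^\times),\qquad \lambda\mapsto(\omega_r),\quad \omega_r=\sum_{i_\bullet}\lambda\bigl(A_{i_1}(z_1)\cdots A_{i_r}(z_r)\bigr)e^{i_1}\!\otimes\cdots\otimes e^{i_r}\,dz_1\cdots dz_r,
\]
and then uses the locality and OPE of the affine Kac--Moody vertex algebra to show that this a priori iterated Laurent series is in fact the expansion of an element of $\Omega_n(D_x^\times)$ (i.e.\ has only the expected log poles on the diagonals). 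That vertex-algebra input is the real content of the paper's argument; your approach trades it for an inductive/five-lemma argument on the PBW filtration, which is more elementary and makes the divided-power nature of $\Omega_n$ visible on the associated graded.

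Two points need attention if you pursue your route. First, the filtration on $\Omega_n$ runs the opposite way to what you wrote: there is no inclusion $\Omega_{n-1}\hookrightarrow\Omega_n$ (appending $\omega_n=0$ violates the residue constraint unless $\omega_{n-1}=0$). What you want is the truncation surjection $\Omega_n\twoheadrightarrow\Omega_{n-1}$, whose \emph{kernel} $K_n$ consists of $(0,\dots,0,\omega_n)$ with $\omega_n$ regular on the diagonals; it is $K_n$ that pairs with $\Sym^n(\mathfrak g_K)$. Second, the five-lemma requires this truncation to be surjective---i.e.\ every $(\omega_0,\dots,\omega_{n-1})\in\Omega_{n-1}(D_x^\times)$ admits an $S_n$-anti-invariant extension $\omega_n$ with the prescribed residues along all diagonals---and you do not address this. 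Over $D_x^\times$ it is true, but it needs an argument (one must produce an anti-invariant log form with residue $\Phi_n^*(\omega_{n-1})$ along $\{z_{n-1}=z_n\}$ and check compatibility on deeper diagonals). The paper's explicit $\psi_K$ bypasses this lifting problem entirely, at the cost of invoking the OPE; conversely, your approach avoids the vertex-algebra machinery but must supply the surjectivity by hand.
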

\begin{remark} A heuristic explanation for why to expect \cite{BD} to hold in characteristic $p$ is that under the perfect pairing, $\Omega(D_x^\times)$ plays the role of the algebra with divided powers and $U(\mathfrak{g}_K)$ plays the role of the algebra without divided powers. A more convincing, general construction of a ``divided power module'' will be considered in Subsection \ref{Jet spaces and the Lie cooperad}.
\end{remark}
\begin{proof}
For the reader's convenience, we highlight the key points in the argument. First, the residue pairing on $\mathfrak{g}_K^{\hat\otimes n}\times\Omega_n(D_x^\times)\rightarrow k$ descends to $U^{\leq n}(\mathfrak{g}_K)$ because of Parshin's residue formula \ref{Parshin}.

We will show the residue pairing is perfect by constructing an isomorphism between $(U^{\leq n}\mathfrak{g}_K)^*$ and  $\Omega_n(D_x^\times)$. Let $e_i\in\mathfrak{g}$ be a basis and $e^i$ the dual basis of $\mathfrak{g}^*$. Let $e_i^{(l)}=e_iz^l\in\mathfrak{g}_K\subset U(\mathfrak{g}_K)$.
Define the map 
$$\psi_K:(U^{\leq n}\mathfrak{g}_K)^*\rightarrow \Omega_n(D_x^\times),\;\;\text{ by }$$ 
\begin{equation}\label{expansion of poles}\lambda\mapsto (w_r),\;\;\text{ where } w_r=\sum_{l_1,\dots, l_n\in\mathbf{Z}}\sum_{i_1,\dots, i_r}\lambda(e_{i_1}^{(l_1)}\dots e_{i_r}^{(l_n)})e^{i_1}\otimes\dots\otimes e^{i_r} z_1^{-l_1-1}\dots z_r^{-l_r-1}dz_1\dots dz_r
\end{equation}
As written, we have 
$$\omega_r\in (\mathfrak{g}^*)^{\otimes r}\otimes k((z_1))\cdots ((z_r))dz_1\dots dz_r.$$
To see that $(\omega_r)$ lands in $\Omega_n(D_x^\times)$, we first show $\omega_r$ is in fact the expansion of an element 
$\hat{\omega}_r\in \big((\mathfrak{g}^*)^{\otimes r}\otimes k[[z_1,\dots, z_r]][(z_i-z_j)^{-1}z_k^{-1}]dz_1\dots dz_r\big)^{S_r}$, where by expansion we mean as introduced  in Definition \ref{formal residue}. We make the subtle remark that $S_r$ does not act on the ``expansion'' because $k((z_1))((z_2))\neq k((z_2))((z_1))$. So we actually must define the map to be $\psi_K:\lambda\mapsto (\hat{\omega}_r)$, once we show $\omega_r$ is the expansion of some (unique) $\hat{\omega}_r$.

The existence of such an expansion follows by the locality property of the affine Kac-Moody vertex algebra (see \cite[Theorem 4.5.2]{BZF} for the general proof for vertex algebras). Indeed, introduce the ``fields'' $A_i(z):=\sum_{l\in\mathbf{Z}}e_i^{(l)}z^{-l-1}$. These fields satisfy locality property $(z-w)^2[A_i(z),A_j(w)]=0$ and the operator product expansion (OPE)
$$A_i(z)A_j(w) = \frac{c_{ij}}{(z-w)^2}+\sum_qf_{ij}^q\frac{A_q(w)}{z-w}+:A_i(z)A_j(w):$$
Here, $c_{ij}$ are scalars corresponding to central charge $c$ (so $c_{ij}=0$ for us), $f_{ij}^q$ are structure constants for $\mathfrak{g}$, and $:A_i(z)A_j(w):$ is the normally ordered product, which lives in $\mathfrak{g}^{\otimes 2}\otimes k[[z,w]][z^{-1},w^{-1}]$.

 Now, rewrite Equation \ref{expansion of poles} as 
$$\omega_r=\sum_{i_1,\dots,i_r}\lambda(A_{i_1}(z_1)\cdots A_{i_r}(z_r))e^{i_1}\otimes\dots\otimes e^{i_r}dz_1\dots dz_r.$$
Consequently, locality and OPE imply $\omega_r\prod_{i<j}(z_i-z_j)^2$ is the expansion of a symmetric element of $(\mathfrak{g}^*)^{\otimes r}\otimes k[[z_1,\dots, z_r]][z_k^{-1}]dz_1\dots dz_r$. And, the residue constraint \ref{residue constraint} follows from the OPE. So, $(\omega_r)\in\Omega_n(D_x^\times)$. Finally, it is clear $\psi_K$ is injective: if $\omega_r$ vanish for all $r$ and $z_i$, then $\phi=0$.

Next, it is also clear the map $\psi_K':\Omega_n(D_x^{\times})\rightarrow (U^{\leq n}(\mathfrak{g}_K))^*$ induced by the residue pairing $\Phi_K$ is the inverse to $\psi_K$. Indeed, this boils down to formula \ref{expansion of poles}. Thus $\psi_K$ being injective and $\psi_K'\circ\psi_K=1$ implies $\psi_K$ is isomorphism.\end{proof}

\begin{prop}\label{differential operators and forms on K/O}\cite{BD}
Let $k$ be an arbitrary field. Let $\phi\in G(K)$ and let $\mathbf{M}_{\phi,n}$ denote the $n^{th}$-filtered component of the vacuum module (see Definition \ref{vacuum module}). There is a perfect pairing induced by restricting Equation \ref{residue pairing on K} to $\Omega_{n}(D_x)\subset\Omega_n(D_x^\times)$
$$\Phi_{K/O}: \mathbf{M}_{\phi,n}\times \Omega_n(D_x)\rightarrow k.$$
\end{prop}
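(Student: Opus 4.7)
The plan is to realize $\Phi_{K/O}$ as the restriction of the already-perfect pairing $\Phi_K$ of Proposition \ref{differential operators and forms on K} and verify (i) that this restriction descends to $\mathbf{M}_{\phi,n}$ and (ii) that the descended pairing is non-degenerate on both sides. For the descent (i), I must check that for $a \in \text{Ad}_\phi\mathfrak{g}_O$, $\eta_2,\dots,\eta_n \in \mathfrak{g}_K$ and $\omega \in \Omega_n(D_x)$,
$$\text{Res}_{z_1=0}\cdots\text{Res}_{z_n=0}\langle a(z_1)\otimes\eta_2(z_2)\otimes\cdots\otimes\eta_n(z_n),\,\omega_n\rangle = 0.$$
Since $\omega_n$, by definition of $\Omega_n(D_x)$, has no poles at any coordinate hyperplane $z_i = 0$ (only along the diagonals), performing the innermost residues and using Parshin's Lemma \ref{Parshin} to commute past any diagonal contributions reduces the calculation to the residue at $z_1 = 0$ of $a(z_1)$ contracted against a form regular at $z_1 = 0$. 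Under the $\tau_{D_x}$-trivialization, $a$ corresponds to a disk-regular section of $\mathfrak{g}_P$, so this contraction is itself regular at $z_1 = 0$ and its residue vanishes.

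For non-degeneracy (ii), one direction is immediate: if $\omega \in \Omega_n(D_x)$ pairs trivially with $\mathbf{M}_{\phi,n}$, then $\Phi_K(-,\omega) \equiv 0$ on $U^{\leq n}(\mathfrak{g}_K)$, so $\omega = 0$ by perfectness of $\Phi_K$. For the other direction, I would invoke the explicit isomorphism $\psi_K \colon (U^{\leq n}(\mathfrak{g}_K))^* \xrightarrow{\sim} \Omega_n(D_x^\times)$ from the proof of Proposition \ref{differential operators and forms on K}: a functional $\lambda \in (\mathbf{M}_{\phi,n})^*$ vanishes on $\text{Ad}_\phi\mathfrak{g}_O\cdot U^{\leq n-1}(\mathfrak{g}_K)$, and reading off the coefficients of $\psi_K(\lambda) = (\omega_r)$ from Equation \ref{expansion of poles}, this vanishing forces the $z_1^{-l_1-1}$ coefficients with $l_1 \geq 0$ to be zero, i.e., $\omega_r$ is regular at $z_1 = 0$. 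The $S_r$-antisymmetry built into the definition of $\Omega_n$ then propagates regularity to every coordinate hyperplane, so $(\omega_r) \in \Omega_n(D_x)$.

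The main obstacle is the descent calculation in (i): carefully reordering the iterated residues via Parshin's formula and then interpreting $\text{Ad}_\phi \mathfrak{g}_O$ bundle-theoretically---as disk-regular sections of $\mathfrak{g}_P$ via the chosen trivialization---so as to see that the final single-variable residue is a residue of a regular form and therefore zero.
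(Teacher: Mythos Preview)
Your overall strategy matches the paper's: restrict the perfect pairing $\Phi_K$ of Proposition \ref{differential operators and forms on K} to $\Omega_n(D_x)$, check it descends to $\mathbf{M}_{\phi,n}$, and then check that the inverse $\psi_K$ restricts to a map $(\mathbf{M}_{\phi,n})^*\to\Omega_n(D_x)$. Your descent argument (i) is essentially the paper's, though the paper does not invoke Parshin's Lemma here; it simply notes, via $\text{Ad}_\phi$-invariance of the pairing and Equation \ref{image of trivialization}, that after the inner residues the remaining one-variable form $\tilde\omega_1$ lies in $\text{Ad}_\phi(\mathfrak{g}_O^*)\,dz_1$, so its pairing with $\xi_1\in\text{Ad}_\phi\mathfrak{g}_O$ is regular at $z_1=0$.

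There is, however, a genuine gap in your step (ii). You claim that $\lambda$ vanishing on $\text{Ad}_\phi\mathfrak{g}_O\cdot U^{\leq n-1}(\mathfrak{g}_K)$ forces the $z_1^{-l_1-1}$ coefficients of $\psi_K(\lambda)$ to vanish for $l_1\geq0$. But that coefficient is $\lambda(e_{i_1}^{(l_1)}\cdots)$, and for $l_1\geq0$ the element $e_{i_1}^{(l_1)}=e_{i_1}z^{l_1}$ lies in $\mathfrak{g}_O$, which is \emph{not} $\text{Ad}_\phi\mathfrak{g}_O$ unless $\phi=1$; so the vanishing does not follow. The paper repairs this by building the twist into the map: it defines
\[
\psi_{K/O}:=\text{Ad}_\phi\circ\psi_K\big|_{(\mathbf{M}_{\phi,n})^*},
\]
so that the relevant coefficients become $(\text{Ad}_\phi.\lambda)(e_{i_1}^{(l_1)}\cdots)=\lambda(\text{Ad}_\phi(e_{i_1}^{(l_1)})\cdots)$, which do vanish for $l_1\geq0$ since $\text{Ad}_\phi(e_{i_1}^{(l_1)})\in\text{Ad}_\phi\mathfrak{g}_O$. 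The same twist is what places the output in $(\text{Ad}_\phi\mathfrak{g}_O^*)^{\hat\otimes r}[(z_i-z_j)^{-1}]$, as required by Equation \ref{image of trivialization} for membership in $\Omega_n(D_x)$. Once you insert this $\text{Ad}_\phi$ twist, your argument goes through exactly as the paper's.
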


\begin{proof}
First, observe the restriction of $\Phi_K$ to $\Omega_n(D_x)\subset\Omega_n(D_x^\times)$ factors through $\mathbf{M}_{\phi,n}$. Indeed, we must show:
$$\text{If } \omega=(\omega_k\vert_{(D_x^\times)^k})\in\Omega_n(D_x) \text{ and }\xi_1\in\text{Ad}_\phi(\mathfrak{g}_O),\xi_2,\dots,\xi_n\in\mathfrak{g}_K,\text{ then } \Phi_K(\xi_1\cdots\xi_k,\omega)=0.$$
Recall $\phi\in\mathfrak{g}_K$ is the transition function associated to the $G$-bundle $P$, and that Equation \ref{image of trivialization} says $\omega_k\vert_{(D_x^\times)^k}\in(\text{Ad}_\phi\mathfrak{g}^*_O)^{\hat{\otimes} n}[(z_i-z_j)^{-1}, i,j\leq n]dz_1\cdots dz_n$. The standard pairing $\langle,\rangle:\mathfrak{g}_K\times\mathfrak{g}^*\otimes Kdz\rightarrow 0$ is clearly $\text{Ad}_\phi$-invariant, in the sense that 
$$\text{Res}_{z=0}\langle\text{Ad}_{\phi}(\xi),\text{Ad}_\phi\omega\rangle = \text{Res}_{z=0}\langle \xi,\omega\rangle.$$
Let $\tilde{\omega}_1:=\text{Res}_{z_2=0}\dots\text{Res}_{z_k=0}\langle \xi_1(z_1)\otimes\dots\otimes\xi_k(z_k),\omega_k\vert_{(D_x^\times)^k}\rangle \in \text{Ad}_\phi(\mathfrak{g}^*_O)dz_1$. Then $\langle\xi_1,\tilde{\omega_1}\rangle\in k[[z_1]]dz_1$, which implies its residue at $z_1$ is zero. This proves existence of the pairing $\Phi_{K/O}$ as claimed in the Proposition. Thus restricting $\psi_K'$, from the proof of Proposition \ref{differential operators and forms on K}, to $\Omega_n(D_x)\subset\Omega_n(D_x^\times)$ produces a map $\psi_{K/O}':\Omega_n(D_x)\rightarrow \mathbf{M}_{\phi,n}$.

To show $\Phi_{K/O}$ is perfect, it just remains to show restricting $\psi_K:(U^{\leq n}\mathfrak{g}_K)^*\rightarrow\Omega_n(D_x^\times)$ to $\mathbf{M}_{\phi,n}$ has image landing in $\Omega_n(D_x)$. More precisely, define
$$\psi_{K/O}:=\text{Ad}_\phi\circ\psi_K\vert_{\mathbf{M}_{\phi,n}^*}:\mathbf{M}_{\phi,n}^*\rightarrow \Omega_n(D_x^\times).$$
where
$$\text{Ad}_\phi(\omega_r(z_1,\dots,z_r)):=(\text{Ad}_{\phi(z_1)}\otimes\dots\otimes\text{Ad}_{\phi(z_r)})(\omega_r(z_1,\dots,z_r)).$$
Then as we saw before, Equation \ref{expansion of poles} implies $\Omega_n(D_x)\xrightarrow{\psi_{K/O}'}\mathbf{M}_{\phi,n}\xrightarrow{\psi_{K/O}}\Omega_n(D_x^\times)$ is the identity map.

Suppose $\lambda\in\mathbf{M}_{\phi,n}^*$. Then 
$$\text{Ad}_\phi.\lambda(e_{i_1}^{(l_1)}\dots e_{i_r}^{(l_r)})=\lambda(\text{Ad}_\phi(e_{i_1}^{(l_1)})\dots\text{Ad}_\phi(e_{i_r}^{(l_r)}))=0$$
 for all $l_1\geq 0$ because $e_{i_1}^{(l_1)}\in\mathfrak{g}_O$ in that case. Thus, no negative powers of $z_{1}$ appear in $\omega_r$. Then $S_r$-invariance of $\omega_r$ implies no negative powers of $z_i$ appear for any $i$. Hence $\psi_{K/O}(\lambda)\in\Omega_n(D_x)$. \end{proof}

Finally, we prove the global analogue:

\begin{thm}\label{differential operators and forms on Sigma}
Let $k$ be an arbitrary field. Let $\mathbf{M}_{\phi,n}^{out}$ denote $n^{th}$ filtered component of the space of coinvariants (see Definition \ref{vacuum module}). Then there is a perfect pairing 
$$\Phi_{X}: \mathbf{M}_{\phi,n}^{out}\times \Omega_n(X)\rightarrow k$$
induced by the pairing: given $\xi_1,\dots,\xi_k\in \mathfrak{g}_K$ and $\omega:=(\omega_0,\dots,\omega_n)\in\Omega_n(X)$, 
$$\Phi_{X}(\xi_1\dots\xi_k,\omega):= \text{Res}_{z_1=0}\dots\text{Res}_{z_k=0}\langle \xi_1(z_1)\otimes\dots\otimes\xi_k(z_k),\omega_k|_{(D_x^{\times})^k}\rangle$$
As a corollary of Corollary \ref{jets and differential operators},
$$J_P^{n,PD}(\mathcal{M})\simeq \Omega_n(X).$$
\end{thm}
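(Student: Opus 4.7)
The plan is to follow the strategy of Propositions \ref{differential operators and forms on K} and \ref{differential operators and forms on K/O} one step further, viewing $\Omega_n(X) \hookrightarrow \Omega_n(D_x)$ as a further restriction that, under the duality of Proposition \ref{differential operators and forms on K/O}, is dual to the quotient $\mathbf{M}_{\phi,n} \twoheadrightarrow \mathbf{M}_{\phi,n}^{out}$. I would define $\Phi_X$ as the composite
\[
\mathbf{M}_{\phi,n} \times \Omega_n(X) \xrightarrow{\mathrm{id}\times\mathrm{restr}} \mathbf{M}_{\phi,n} \times \Omega_n(D_x) \xrightarrow{\Phi_{K/O}} k,
\]
after observing that restriction to the formal disk $\Omega_n(X) \hookrightarrow \Omega_n(D_x)$ is injective (the $\omega_i$ are global sections on the irreducible variety $X^i$) and well-defined (the residue/expansion constraint of Definition \ref{The universal space} is imposed identically on both sides). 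The theorem then reduces, via the isomorphism $\psi_{K/O}: \mathbf{M}_{\phi,n}^* \xrightarrow{\sim} \Omega_n(D_x)$ of Proposition \ref{differential operators and forms on K/O}, to identifying the annihilator $(\mathbf{M}_{\phi,n}^{out})^* = \{\lambda \in \mathbf{M}_{\phi,n}^* : \lambda|_{\mathbf{M}_{\phi,n}\mathfrak{g}_{out}} = 0\}$ with the image of $\Omega_n(X)$ inside $\Omega_n(D_x)$.

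The inclusion $\supseteq$ asserts that $\Phi_X$ descends to $\mathbf{M}_{\phi,n}^{out}$: for $\omega \in \Omega_n(X)$ and $\xi_k \in \mathfrak{g}_{out}$, I need $\Phi_X(\xi_1\cdots\xi_k,\omega)=0$. The strategy here is to exploit that $\omega_k$ is global on $X^k$ and $\xi_k$ globalizes to $X\setminus x$: viewing $\langle \xi_k,\omega_k\rangle$ as a $1$-form in $z_k$, the Strong Residue Theorem \ref{SRT} relates its residue at $z_k = x$ to the sum of residues along the diagonals $z_k=z_i$. The residue constraint of Definition \ref{The universal space} together with the $S_k$-antisymmetry of $\omega_k$ then rewrite each diagonal residue as a pairing against $\omega_{k-1}$ with a Lie-bracket insertion $[\xi_i,\xi_k]$; Parshin's Lemma \ref{Parshin} justifies the commutation of iterated residues used here. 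An induction on $k$, with base case $k = 1$ being Tate's residue theorem applied to the regular global $1$-form $\omega_1$, closes the argument.

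The inclusion $\subseteq$ is the main obstacle: given $\lambda \in \mathbf{M}_{\phi,n}^*$ annihilating $\mathbf{M}_{\phi,n}\mathfrak{g}_{out}$, I must show the formal-disk form $\psi_{K/O}(\lambda) \in \Omega_n(D_x)$ extends globally to $\Omega_n(X)$. I plan to induct on $k \leq n$: at level $k$, the vanishing of $\lambda$ on $\mathbf{M}_{\phi,n}\mathfrak{g}_{out}$ supplies, through the converse direction of SRT \ref{SRT}, precisely the residue-vanishing conditions needed to extend $\omega_k$ from $(D_x)^k$ to a global section of $(\mathfrak{g}_P^*)^{\boxtimes k} \otimes \tilde{\Omega}_{X^k}(\log D)$ on $X^k$. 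The key extension Lemma \ref{extending section from tubes to plane} (due to Beilinson) implements this upgrade from formal tubular neighborhoods to a global section, while the residue constraint (Equation \ref{residue constraint}) guarantees compatibility between the extensions at distinct diagonal strata. The delicacy for $n \geq 3$ is that $D = \bigcup_\sigma D_\sigma$ is not normal crossing, requiring the enlargement $\tilde{\Omega}_{X^n}(\log D)$ and careful handling of the interactions between strata (cf.\ Remark \ref{justify NCD log forms}). The corollary $J_P^{n,PD}(\mathcal{M}) \simeq \Omega_n(X)$ is then immediate by combining this perfect pairing with that of Corollary \ref{jets and differential operators}, since both identify the target with the linear dual of $\mathbf{M}_{\phi,n}^{out}$.
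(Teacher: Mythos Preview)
Your overall strategy matches the paper's exactly: reduce the perfect pairing to showing that, under the isomorphism $\psi_{K/O}:\mathbf{M}_{\phi,n}^*\xrightarrow{\sim}\Omega_n(D_x)$ of Proposition \ref{differential operators and forms on K/O}, the subspace $(\mathbf{M}_{\phi,n}^{out})^*$ corresponds precisely to $\Omega_n(X)$. Your treatment of the inclusion $\subseteq$ (extending $\psi_{K/O}(\lambda)$ from the formal polydisk to $X^k$) is also the paper's: use the Strong Residue Theorem \ref{SRT} to extend in one coordinate, invoke $S_k$-symmetry to extend in each coordinate separately, and then apply Lemma \ref{extending section from tubes to plane} to glue. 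That part is fine.

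Where you diverge is the inclusion $\supseteq$, and your proposed induction does not close. After applying the residue theorem in $z_k$ and the residue constraint, you obtain terms of the form $\Phi_X(\xi_1\cdots[\xi_i,\xi_k]\cdots\xi_{k-1},\omega)$ with the bracket in position $i<k$. But $[\xi_i,\xi_k]\in\mathfrak{g}_K$ has no reason to lie in $\mathfrak{g}_{out}$ (since $\xi_i\in\mathfrak{g}_K$ is arbitrary), so the induction hypothesis ``last factor in $\mathfrak{g}_{out}$'' does not apply. The paper avoids this entirely with a one-step argument: since $\omega_k$ is global on $X^k$ and $\xi_k\in\mathfrak{g}_{out}$ is regular on $X\setminus x$, the pairing $\langle\xi_1(z_1)\otimes\cdots\otimes\xi_k(z_k),\omega_k\rangle$, viewed as a $1$-form in $z_k$ with the first $k-1$ variables restricted to $(D_x^\times)^{k-1}$, is regular on $(D_x^\times)^{k-1}\times(X\setminus x)$. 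The diagonal poles $z_k=z_i$ do not appear because the $z_i$ lie in the formal disk at $x$ while $z_k$ ranges over $X\setminus x$; equivalently, after the formal expansion $(z_i-z_k)^{-1}=\sum_m z_k^m z_i^{-m-1}$ the diagonal singularities are absorbed into the pole at $x$. Thus the only possible residue in $z_k$ is at $x$, and projectivity of $X$ forces $\text{Res}_{z_k=0}\langle\cdots\rangle=0$ directly. No induction, no Parshin, no diagonal bookkeeping is needed for this direction. Replace your inductive argument with this observation and the proof goes through.
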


\begin{proof}
Using Proposition \ref{differential operators and forms on K/O}, we just need to show restricting $\psi_{K/O}'$ to $\Omega_{n}(X)$ has image landing in $(\mathbf{M}_{\phi,n}^{out})^*$, and restricting $\psi_{K/O}:\mathbf{M}_{\phi,n}^*\rightarrow\Omega_n(D_x)$ to $(\mathbf{M}_{\phi,n}^{out})^*$ has image landing in $\Omega_n(X)$. 

Let us first discuss $\psi_{K/O}'$. Suppose $\omega=(\omega_k)\in\Omega_n(X)$ and suppose $\xi_1,\dots, \xi_{k-1}\in\mathfrak{g}_K,\xi_k\in\mathfrak{g}_{\text{out}}$. Then $\langle\xi_1\dots\xi_k,\omega\rangle\;\;\text{ is regular on } (D_x^\times)^{\times (k-1)}\times (X\setminus x)$. So on the final component, it may only possibly have a residue at $x$. As $X$ is projective, this forces
$$\text{Res}_{z_k=0}\langle\xi_1\dots\xi_k,\omega\vert_{(D_x^\times)^k}\rangle =0.$$
Next, if we assume $\xi_1\in\text{Ad}_\phi(\mathfrak{g}_O),\xi_2,\dots,\xi_k\in\mathfrak{g}_K$, then we already saw in Proposition \ref{differential operators and forms on K/O} that $\Phi_{X}(\xi_1\dots\xi_k,\omega)=0$. Thus, $\psi_{X}':=\psi_{K/O}'\vert_{\Omega_n(X)}:\Omega_n(X)\rightarrow(\mathbf{M}_{\phi,n}^{out})^*$.

Now, suppose $\lambda\in(\mathbf{M}_{\phi,n}^{out})^*$, and define $\psi_{K/O}(\lambda):=(\omega_k)$ as in Proposition \ref{differential operators and forms on K/O}. There, we showed $\omega_k$ is regular on $(D_x)^{\times k}$. Now, using $\lambda$ kills $\phi\mathfrak{g}_O\phi^{-1}$ on the left-most factor and $\mathfrak{g}_{out}$ on the right-most factor, we conclude $\omega_k$ may be extended to $(D_x)^{\times (k-1)}\times X$ by the Strong Residue Theorem \ref{SRT}. Since $\omega_k$ is $S_k$-invariant, it is thus regular on $(D_x)^{\times i}\times X\times (D_x)^{\times(k-i-2)}$ for all $i$. We will conclude it is then automatically regular on $X^k$ by the following remarkable fact from algebraic geometry, which is a generalization of the strong residue theorem:

\begin{lemma}\label{extending section from tubes to plane} \footnote{This lemma for $m=2$ appears as Theorem 10.3.3 of \cite{BZF}, but there are a few typos present in their proof, and we could not figure out how to correct them. That is why we produced a new independent proof.}
Suppose $X$ is a smooth projective curve over a field $k$, and fix a point $x_0\in X$. Suppose $\mathcal{E}$ is a locally free sheaf on $X^m$, $m\geq 2$, and suppose $s$ is a section of $\mathcal{E}$ defined locally on 
$$s\in\Gamma(\bigcup_{1\leq i\leq m} (D_{x_0})^{\times i}\times X\times (D_{x_0})^{\times(m-i-2)},\mathcal{E}).$$ 
Then $s$ extends uniquely to a global section of $\mathcal{E}$.
\end{lemma}
\begin{proof}
(The following proof was communicated by Sasha Beilinson). The proof will proceed by induction. Let us prove the $m=2$ case first.

Since $\mathcal{E}$ is locally free over $X^2$, we may find a very ample line bundle $\mathcal{L}$ over $X$ such that $\mathcal{E}^*\otimes(\mathcal{L}\boxtimes\mathcal{L})$ is generated by global sections. This means the map
$$V\otimes\mathcal{O}_{X\times X}\rightarrow \mathcal{E}^*\otimes(\mathcal{L}\boxtimes\mathcal{L}),\;\; V:=\Gamma(X^2,\mathcal{E})\otimes(\mathcal{L}\boxtimes\mathcal{L})$$
is surjective. Taking the dual, we find $\mathcal{E}\hookrightarrow V^*\otimes(\mathcal{L}\boxtimes\mathcal{L})$ is injective. Thus, we see the statement for $\mathcal{L}\boxtimes\mathcal{L}$ implies the statement for $\mathcal{E}$. Furthermore, we may assume $\mathcal{L}$ has large enough degree so that $H^1(X,\mathcal{L})=0$. 

Pick $n$ sufficiently large so that $H^0(X,\mathcal{L}(-nx_0))=0$. Let $D_n:=\text{Spec}(\mathcal{O}_X/m_{x_0}^{n+1})$ be the $n^{th}$ infinitesimal neighborhood of $x_0$. Consider the short exact sequence of sheaves on $X$:
\begin{equation}\label{ample exact sequence}
0\rightarrow\mathcal{L}(-nx_0)\rightarrow\mathcal{L}\rightarrow\mathcal{L}\vert_{D_n}\rightarrow 0.
\end{equation}

Now, consider the external tensor product on the left with $(X,\mathcal{L})$ and then with $(D_n,\mathcal{L}\vert_{D_n})$. The restriction map $D_n\times X\rightarrow X\times X$ induces a map on cohomologies:
$$\begin{tikzcd}[column sep=small]
0 \arrow[r] & {H^0(X^2,\mathcal{L}\boxtimes\mathcal{L})} \arrow[r] \arrow[d]    & {H^0(X\times D_n,\mathcal{L}\boxtimes\mathcal{L}\vert_{D_n})} \arrow[r, "\delta"] \arrow[d]    & {H^1(X\times X,\mathcal{L}\boxtimes\mathcal{L}(-nx_0))} \arrow[r] \arrow[d] & 0 \\
0 \arrow[r] & {H^0(D_n\times X,\mathcal{L}\vert_{D_n}\boxtimes\mathcal{L})} \arrow[r] & {H^0(D_n^2,\mathcal{L}\vert_{D_n}\boxtimes\mathcal{L}\vert_{D_n})} \arrow[r, "\delta"] & {H^1(D_n\times X,\mathcal{L}\vert_{D_n}\boxtimes\mathcal{L}(-nx_0))}     \arrow[r]   & 0
\end{tikzcd}$$
Thus, the obstruction of lifting $s\in H^0(X\times D_n,\mathcal{L}\boxtimes \mathcal{L})$ to $H^0(X^2,\mathcal{L}^{\boxtimes 2})$ is  $\delta(s)\in H^1(X\times X,\mathcal{L}\boxtimes\mathcal{L}(-nx_0))$. By Kunneth, we compute this $H^1$ equals $H^0(X,\mathcal{L})\otimes H^1(X,\mathcal{L}(-nx_0))$. Next, we know $s$ restricted to $D_n\times D_n$ extends to $D_n\times X$, so the restriction of $\delta(s)$ to $H^1(D_n\times X,\mathcal{L}\vert_{D_n}\boxtimes\mathcal{L}(-nx_0))$ vanishes. By Kunneth, this $H^1$ equals $H^0(D_n,\mathcal{L}\vert_{D_n})\otimes H^1(X,\mathcal{L}\vert_{D_n})$. Finally, the restriction map 
$$H^0(X,\mathcal{L})\otimes H^1(X,\mathcal{L}(-nx_0))\rightarrow H^0(D_n,\mathcal{L}\vert_{D_n})\otimes H^1(X,\mathcal{L}(-nx_0))$$
is injective because $H^0(X,\mathcal{L}(-nx_0))=0$. Thus, $\delta(s)=0$ and $s$ extends to a global section of $\mathcal{E}$. This concludes the $m=2$ case.

Now suppose the lemma holds for $X^{m-1}$. We may again reduce to the case $\mathcal{E}=\mathcal{L}^{\boxtimes m}$ for a very ample line bundle $\mathcal{L}$. Suppose a section $s$ of $\mathcal{L}^{\boxtimes m}$ is defined on $X\times D_n^{\times (m-1)}$ and all $m$ permutations. By the induction hypothesis, it may be extended to $X^{m-1}\times D_n$. Then consider the exact diagram
$$\adjustbox{scale=.85,center}{
\begin{tikzcd}[column sep=small]
0 \arrow[r] & {H^0(X^m,\mathcal{L}^{\boxtimes m})} \arrow[r] \arrow[d]                                             & {H^0(X^{m-1}\times D_n,\mathcal{L}^{\boxtimes m-1}\boxtimes\mathcal{L}\vert_{D_n})} \arrow[r, "\delta"] \arrow[d] & {H^1(X^m,\mathcal{L}^{\boxtimes m-1}\boxtimes\mathcal{L}(-nx_0))} \arrow[d] \arrow[r]                & 0 \\
0 \arrow[r] & {H^0(D_n^{ m-1}\times X,\mathcal{L}\vert_{D_n}^{\boxtimes m-1}\boxtimes\mathcal{L})} \arrow[r] & {H^0(D_n^m,\mathcal{L}\vert_{D_n}^{\boxtimes m})} \arrow[r, "\delta"]                                             & {H^1(D_n^{m-1}\times X,\mathcal{L}\vert_{D_n}^{\boxtimes m-1}\boxtimes\mathcal{L}(-nx_0))} \arrow[r] & 0
\end{tikzcd}
}$$
where the top row is induced by the cohomology of the external tensor product of the exact sequence \ref{ample exact sequence} with $(X^{m-1},\mathcal{L}^{\boxtimes m-1})$. The vertical arrows are restricting the first $m-1$ factors to $D_n^{m-1}$. Repeating the $m=2$ argument, we find the restriction of 
$$\delta(s)\in H^1(X^m,\mathcal{L}^{\boxtimes m-1}\boxtimes \mathcal{L}(-nx_n))=H^0(X,\mathcal{L})^{\otimes m-1}\otimes H^1(X,\mathcal{L}(-nx_0))$$
to $H^1(D_n^{m-1}\times X,\mathcal{L}\vert_{D_n}^{\boxtimes m-1}\boxtimes\mathcal{L}(-nx_0))=H^0(D_n,\mathcal{L}\vert_{D_n})^{\otimes m-1}\otimes H^1(X,\mathcal{L}(-nx_0))$
 is $0$ because of the assumption that the restriction of $s$ to $D_n^{\times m}$ extends to $D_n^{m-1}\times X$. Moreover, we know the restriction map on $H^1$ is injective because $\mathcal{L}$ is very ample. Thus, the section $s$ lifts to a global section $s\in\Gamma(X^m,\mathcal{L}^{\boxtimes m})$.
\end{proof}
To complete Theorem \ref{differential operators and forms on Sigma}, we may apply Lemma \ref{extending section from tubes to plane} to the locally free sheaf $\mathcal{E}:=(\mathfrak{g}_P^*)^{\boxtimes k}\otimes \tilde{\Omega}_{X^k}(\text{log}D)$  on $X^k$, and the section $s=\omega_k$. This shows 
$$\psi_X:=\psi_{K/O}\vert_{(\mathbf{M}_n^{out})^*}:(\mathbf{M}_{\phi,n}^{\text{out}})^*\rightarrow\Omega_n(X)$$
as desired, and we are finished.
\end{proof}

We conclude with a concise summary of the main results proven in this section. There is a commutative diagram where each horizontal line is a vector space isomorphism. Moreover, the top row is compatible with the algebra structures. 
\begin{equation}\label{summary of jets}
\begin{tikzcd}
{J_{\phi}^{n,PD}(G(K))} \arrow[r, "\sim"]           & ((U\mathfrak{g}_K)_{\leq n})^* \arrow[r, "\sim"]          & \Omega_n(D_x^\times)    \\
{J_{\phi.G(O)}^{n,PD}(\text{Gr}_G)} \arrow[u] \arrow[r, "\sim"] & (\mathbf{M}_{\phi,n})^* \arrow[u] \arrow[r, "\sim"]              & \Omega_n(D_x) \arrow[u] \\
{J_P^{n,PD}(\mathcal{M})} \arrow[u] \arrow[r, "\sim"] & (\mathbf{M}_{\phi,n}^{\text{out}})^* \arrow[u] \arrow[r, "\sim"] & \Omega_n(X) \arrow[u]  
\end{tikzcd}
\end{equation}

\subsection{Changing the point $x\in X$}\label{2 point uniformization}
There is classical fact that ``1-point coinvariants are isomorphic to the 2-point coinvariants.'' In the case of the Heisenberg Lie algebra, and in characteristic $0$, this is proved in Appendix 9.6 of \cite{BZF}. For $\text{Bun}_G$, this follows from the fact that 1-point uniformization  is isomorphic to the 2-point uniformization. Let us formulate this precisely. Let $x,y\in X$ and consider the vacuum module 
$$\mathbf{M}_{x,y}:= \big(U(\mathfrak{g}_{K_x})\otimes_{U(\mathfrak{g}_{O_x})}1_x\big)\otimes_k \big(U(\mathfrak{g}_{K_y})\otimes_{U(\mathfrak{g}_{O_y})}1_y\big).$$
Let $\mathcal{O}_{out}:=\mathcal{O}(X\setminus \{x,y\})$. Then we have injective (Lie algebra) morphism
$$\mathfrak{g}_{out}:=\mathfrak{g}\otimes\mathcal{O}_{out}\hookrightarrow \mathfrak{g}_{K_x}\oplus\mathfrak{g}_{K_y}.$$
Then define 
$$\mathbf{M}_{x,y}^{out}:=\mathbf{M}_{x,y}\big/ \mathfrak{g}_{out}\cdot\mathbf{M}_{x,y}.$$
The PBW filtration on $\mathbf{M}_{x},\mathbf{M}_y$ naturally induces a tensor-product filtration on $\mathbf{M}_{x,y}$, which in turn induces a filtration on $\mathbf{M}_{x,y}^{out}$.
\begin{thm}
There is an isomorphism of graded vector spaces 
$$\mathbf{M}_x^{out(x)}\xrightarrow{\sim}\mathbf{M}_{x,y}^{out(x,y)}$$
\end{thm}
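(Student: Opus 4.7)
The plan is to construct an explicit filtration-preserving comparison map and then establish bijectivity by adapting Theorem~\ref{differential operators and forms on Sigma} to the two-point setting.

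First, I would define the candidate map $\iota: \mathbf{M}_x^{out(x)} \to \mathbf{M}_{x,y}^{out(x,y)}$ on representatives by $v\cdot 1_x \mapsto v\cdot 1_x \otimes 1_y$. This is manifestly filtration-preserving, since the tensor-product filtration on $\mathbf{M}_{x,y}$ restricted to the slice $(-)\otimes 1_y$ coincides with the PBW filtration on $\mathbf{M}_x$. To see that $\iota$ descends to coinvariants, note the inclusion $\mathfrak{g}_{out(x)} \subset \mathfrak{g}_{out(x,y)}$ (regularity outside $x$ forces regularity outside $\{x,y\}$), together with the fact that the $K_y$-expansion of any $\xi \in \mathfrak{g}_{out(x)}$ lies in $\mathfrak{g}_{O_y}$ and hence annihilates $1_y$. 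Therefore
\[ \iota(\xi v\cdot 1_x) \;=\; \xi \cdot (v\cdot 1_x \otimes 1_y) \;\in\; \mathfrak{g}_{out(x,y)}\cdot \mathbf{M}_{x,y}. \]

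For bijectivity, I would prove a two-point analogue of Theorem~\ref{differential operators and forms on Sigma}: the graded component $\mathbf{M}_{x,y,n}^{out(x,y)}$ is in perfect pairing with the very same space $\Omega_n(X)$ (hence with $J_P^{n,PD}(\mathcal{M})$). The argument runs in parallel to Propositions~\ref{differential operators and forms on K}, \ref{differential operators and forms on K/O} and Theorem~\ref{differential operators and forms on Sigma}: pair a PBW monomial $\xi_1\cdots\xi_k$, where each $\xi_i$ now lies in either $\mathfrak{g}_{K_x}$ or $\mathfrak{g}_{K_y}$, against $\omega_k$ by taking iterated local residues at the appropriate point, using Parshin reciprocity (Lemma~\ref{Parshin}) to ensure the pairing descends to $U(\mathfrak{g}_{K_x}\oplus\mathfrak{g}_{K_y})$ and factors through the relevant coinvariants. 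The inverse maps $\psi_{K/O}$ and $\psi_X$ are built exactly as before, now expanding at either $x$ or $y$ for each index, with Lemma~\ref{extending section from tubes to plane} providing the global extension of the resulting log-forms across $X^n$.

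Granted this 2-point analogue, both $\mathbf{M}_{x,n}^{out(x)}$ and $\mathbf{M}_{x,y,n}^{out(x,y)}$ are identified with $\Omega_n(X)^*$, and it remains only to check that $\iota$ corresponds to the identity under these identifications. This is a short residue computation: since $1_y$ is the vacuum vector, every residue at the $y$-point of a form contracted against $v\cdot 1_x \otimes 1_y$ is forced to vanish by the constraint~\eqref{residue constraint} paired with regularity at $y$, so $\Phi_{X,\,2\text{-pt}}(v\cdot 1_x\otimes 1_y,\omega)$ reduces to $\Phi_X(v\cdot 1_x,\omega)$ of Theorem~\ref{differential operators and forms on Sigma}.

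The principal obstacle will be the bookkeeping in the 2-point version of Theorem~\ref{differential operators and forms on Sigma}: one must track which subset of the coordinates $z_1,\ldots,z_n$ is expanded near $x$ versus near $y$, and verify that the residue condition~\eqref{residue constraint} along each diagonal is consistent across these choices. A purely algebraic alternative would be to prove surjectivity of $\iota$ directly by strong approximation: since $X\setminus x$ is affine, $H^1(X\setminus x,\mathcal{O})=0$ and the expansion map $\mathfrak{g}_{out(x,y)} \to \mathfrak{g}_{K_y}/\mathfrak{g}_{O_y}$ is surjective; this lets one inductively move PBW-generators from the $y$-slot to the $x$-slot modulo $\mathfrak{g}_{out(x,y)}\cdot\mathbf{M}_{x,y}$, at the cost of producing lower-length terms on the $y$-side via commutators with elements of $\mathfrak{g}_{O_y}$ acting on $1_y$. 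Injectivity of $\iota$ would then still be secured by duality against $J_P^{n,PD}(\mathcal{M})$.
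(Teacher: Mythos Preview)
The paper does not actually supply a proof of this statement: it is recorded as a ``classical fact,'' with a reference to \cite[Appendix~9.6]{BZF} for the Heisenberg case in characteristic zero, and the one-line assertion that for $\text{Bun}_G$ it ``follows from the fact that 1-point uniformization is isomorphic to the 2-point uniformization.'' Implicitly, the intended argument is that both coinvariant spaces compute $\mathcal{D}^{crys}_P(\mathcal{M})\simeq U(T_P\mathcal{M})$ via the Lemma preceding Corollary~\ref{jets and differential operators}, once one knows that the 2-point adelic presentation of $\text{Bun}_G$ yields the same description of $T_P\mathcal{M}$ as the 1-point one.

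Your proposal therefore goes well beyond what the paper provides, and both of your strategies are sound. The route through a 2-point analogue of Theorem~\ref{differential operators and forms on Sigma} is closest in spirit to the paper: it amounts to redoing the identification $\mathbf{M}^{out}\simeq(J_P^{n,PD}\mathcal{M})^*$ starting from the 2-point uniformization and observing that the target $\Omega_n(X)$ is independent of that choice. Your ``purely algebraic alternative'' via strong approximation on the affine curve $X\setminus x$ is precisely the argument in \cite{BZF} that the paper cites. One small caveat on the mixed version (surjectivity by approximation, injectivity by duality): the injectivity step already presupposes that the 2-point residue pairing with $\Omega_n(X)$ descends to $\mathbf{M}_{x,y}^{out(x,y)}$; this is the easy direction of your 2-point analogue (the analogue of $\psi_X'$ landing in $(\mathbf{M}^{out})^*$), so it is not circular, but the two alternatives are not as independent as your phrasing suggests. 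The cleanest self-contained version of the strong-approximation argument avoids duality altogether and builds an explicit inverse to $\iota$ by inductively moving $y$-generators to the $x$-side.
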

\noindent As a consequence, $\mathbf{M}_x^{out(x)}\simeq\mathbf{M}_y^{out(y)}$ for all $x,y\in X$. This justifies our notation $\mathbf{M}_n^{out}:=(\mathbf{M}_x^{out(x)})_n$ used in section $4$, and moreover shows the independence of choosing $x\in X$ in defining $\Omega_n$.

\section{Pullback to the Fulton-Macpherson Compactification}\label{operads}
In this section we recall the geometry of the Fulton-Macpherson compactification of the configuration space of $n$-points on a curve, $\hat{X}^n$. The main result is Theorem \ref{pullback of BD is BG}, which relates the global sections of the two sheaves considered by \cite{BD,BG}. Finally, in subsection \ref{Jet spaces and the Lie cooperad}, we provide a relationship between the \cite{BG} sheaves and the Lie operad.
\subsection{Resolution of the diagonal}

Let us briefly recall the geometry of the Fulton-Macpherson compactification of the configuration space which we will use. Let $X$ be a smooth projective curve and $n>1$ a positive integer. Let $\mathring{X}^n\subset X^n$ be the open set of all $n$-tuples of pairwise distinct points of $X$, and let $D:=X^n\setminus\mathring{X}^n$ denote the diagonal divisor. Then there exists a smooth projective variety $\hat{X}^n$ and projective morphism $p:\hat{X}^n\rightarrow X^n$ which is an isomorphism over $\mathring{X}^n$ and such that $\hat{D}:=(p^{-1}(D))_{red}$ is a normal crossing divisor. We refer to \cite{FM94} or \cite{BG} for the construction using a sequence of blowups along diagonals. 

The irreducible components of $\hat{D}$ are $\{\hat{D}_I: |I|\geq 2\}$, and each of them is smooth. By a \textit{ tree}, we mean a graph without loops such that there is exactly one ingoing edge and at least 2 outgoing edges at each vertex of the graph. A connected component of a tree has a unique ingoing external edge. Such a component may consist of a single line, in which case it has no vertices and that line is viewed as the both the outgoing and ingoing external edge. A connected tree with a single vertex is called a \textit{star}. An $[n]$-tree consists of a tree together with a bijection between the set $[n]$ and the set of outgoing external edges of the tree. The symmetric group $S_n$ acts naturally on the set of outgoing edges and we consider two labelings of an $[n]$-tree the same if they are in the same coset under the isotropy group $S_n(T)\subset S_n$ action.

There is a natural ordering among $[n]$-trees, where we say $T\leq T'$ if $T'$ is obtained from $T$ via a sequence of operations consisting of either contraction of an internal edge, or deletion of a star containing an ingoing external edge. There is a natural stratification of $\hat{X}^n$ -- we summarize its main properties in the following proposition.
\begin{prop}\label{strata}
There is a stratification $\hat{X}^n=\sqcup_{T}S_T$ by smooth locally-closed algebraic subvarieties $S_T$ such that 
\begin{enumerate}
\item the strata $\{S_T\}$ are indexed by all $[n]$-trees,
\item The codimension of $S_T$ equals the number of vertices of $T$,
\item $S_T\subset \bar{S}_{T'}\Leftrightarrow T\leq T'$
\item If $T$ consists of $n$ connected components, then $S_T=\mathring{X}^n$, the unique open stratum,
\item If $T$ consists of $[n]\setminus I$ connected components, where one consists of an $I$-star and the rest are just lines, then $\bar{S}_T=\hat{D}_I$, an irreducible divisor.
\item Given a vertex $v\in T$, let $E_v\subset [n]$ denote the subset of all the labels attached to all outgoing external edges of $T$ that come out of vertex $v$. Then
$$\bar{S}_T=\bigcap_{\text{ vertices v in T}}\hat{D}_{E_v}.$$
\end{enumerate}
\end{prop}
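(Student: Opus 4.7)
The plan is to reduce the proposition to the iterated-blow-up description of $\hat{X}^n$ given in \cite{FM94}. Start with $X^n$ and successively blow up proper transforms of the small diagonals $\Delta_I = \{(x_1,\dots,x_n) : x_i=x_j \text{ for all } i,j\in I\}$ as $I$ ranges over subsets of $[n]$ with $|I|\geq 2$, in any order refining reverse inclusion (deepest diagonal first). At each stage the blow-up center is smooth, and Fulton--MacPherson's theorem produces a simple normal-crossing divisor $\hat{D}=\bigcup_I \hat{D}_I$ such that components $\hat{D}_{I_1},\dots,\hat{D}_{I_r}$ have nonempty common intersection if and only if $\{I_1,\dots,I_r\}$ is a \emph{nested family}, meaning any two of the $I_j$ are either disjoint or one contains the other.

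The second step is a purely combinatorial dictionary between nested families on $[n]$ and $[n]$-trees. A nested family $\{I_1,\dots,I_r\}$ determines a unique $[n]$-tree $T$ whose internal vertices $v$ are in bijection with the $I_j$ via $E_v = I_j$ (reading $E_v$ as the set of labels of external edges in the subtree rooted at $v$), with the parent-child relation recording strict containment, and with the labels in $[n]$ not contained in any $I_j$ appearing as single-line components. Under this bijection, assertion (6) becomes the \emph{definition} $\bar{S}_T := \bigcap_{v}\hat{D}_{E_v}$, and one sets $S_T := \bar{S}_T \setminus \bigcup_{T'>T}\bar{S}_{T'}$.

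From this setup (1)--(5) follow mechanically. Assertion (1) is the dictionary itself. Assertion (2) is immediate from the simple normal crossing property: each $\hat{D}_{E_v}$ contributes one codimension transversally, so $\dim \bar{S}_T = \dim X^n - \#\{\text{vertices of }T\}$. Assertions (4) and (5) are the two extremal cases, corresponding respectively to the empty nested family (so $\bar{S}_T = \hat{X}^n$ and $S_T = \mathring{X}^n$) and the singleton family $\{I\}$ (giving $\bar{S}_T = \hat{D}_I$). For (3), observe that enlarging the nested family $\mathcal{F}_T$ by inserting one additional subset corresponds to subdividing an edge of $T$ by a new vertex, which is exactly the inverse of the two operations that generate $\leq$ (contracting an internal edge, or deleting a star whose ingoing edge is external). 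Since $\mathcal{F}_{T'}\subseteq \mathcal{F}_T$ if and only if $\bar{S}_{T'}\supseteq \bar{S}_T$, this gives $S_T\subset \bar{S}_{T'}\iff T\leq T'$.

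The main obstacle is the rigorous verification of (3): one must carry out the case analysis matching the two tree moves with the combinatorial moves on nested families (removing an ``internal level'' versus removing a topmost subset in a component), and check that no extra components appear in the intersection $\bigcap_v \hat{D}_{E_v}$ beyond those of the form $\bar{S}_{T''}$ for $T''\geq T$. All remaining geometric content is absorbed into the Fulton--MacPherson intersection-combinatorics of $\hat{D}$, so this reduces to combinatorial bookkeeping carried out explicitly in \cite{FM94} and \cite{BG}.
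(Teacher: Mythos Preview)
Your proposal is correct and takes the same route as the paper: the paper does not supply an independent proof of this proposition but simply cites \cite{FM94} and \cite{BG} for the construction and its properties, and your sketch is precisely a summary of how those references establish the stratification via the nested-set combinatorics of the iterated blow-up. The dictionary you set up between nested families $\{I_1,\dots,I_r\}$ and $[n]$-trees is the standard one, and your identification of (3) as the only point requiring a genuine case analysis is accurate; that analysis is carried out in the cited sources, so deferring to them is exactly what the paper does.
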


We may apply the construction of $\hat{X}^n$ to $X=\mathbf{A}_k$, the affine line over $k$. The group of affine transformations $\text{Aff}$ of the line induces an action on $\hat{X}^n$, which acts freely on $\hat{X}^n\setminus\hat{D}_{[n]}\simeq X^n\setminus D_{[n]}$. We find $\mathbf{P}^{n-2}_k\simeq (\mathbf{A}_k^n\setminus D_{[n]})/\text{Aff}$ and define
$$\hat{\mathbf{P}}_k^{n-2}:=(\hat{\mathbf{A}}_k^n\setminus\hat{D}_{[n]})/\text{Aff}.$$
More generally, given a finite set $I$ and curve $X$, define $X^I$ to be the set of $X$-valued functions on $I$. Then $X^I\simeq X^{\#I}$ and $\hat{X}^I=\hat{X}^{\# I}$. Define $\mathbf{P}_k^I:=(\mathbf{A}_k^I\setminus D_I)/\text{Aff}$ and $\hat{\mathbf{P}}_k^I:=(\hat{\mathbf{A}}_k^I\setminus \hat{D}_I)/\text{Aff}$. We caution that $\mathbf{P}_k^I\simeq \mathbf{P}_k^{\# I-2}$. In what follows, we will omit the index $k$ unless we wish to specify a phenomenon specific to characteristic $p$.

The variety $\hat{\mathbf{P}}^I$ also has a stratification $\hat{\mathbf{P}}^I=\sqcup_T \mathbf{P}_T^{\circ}$, parameterized by connected $I$-trees with at least one vertex, where each $\mathbf{P}_T^{\circ}$ is locally closed with smooth closure $\mathbf{P}_T$. 
\begin{prop}\label{strata for P}
For each connected tree $T$, there are canonical isomorphisms
$$\mathring{\mathbf{P}}_T=\prod_{\text{vertices v}\in T}\mathring{\mathbf{P}}^{I(v)},\;\; \mathbf{P}_T=\prod_{\text{vertices v}\in T}\hat{\mathbf{P}}^{I(v)}$$
where $I(v)$ denotes the set of all outgoing edges at the vertex $v$.
\end{prop}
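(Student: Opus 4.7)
My plan is to prove both isomorphisms simultaneously by induction on the number of vertices $|V(T)|$ of the connected $I$-tree $T$, using the realization of $\hat{\mathbf{P}}^I$ as an iterated blowup of $\mathbf{A}^I$ along diagonals modulo the action of $\text{Aff}$. The base case occurs when $T$ is a single star: then $I(v)=I$, and both identities reduce to the tautologies $\mathring{\mathbf{P}}_T=\mathring{\mathbf{P}}^I$ and $\mathbf{P}_T=\hat{\mathbf{P}}^I$, which hold by the definition of the stratification.

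For the inductive step, I would select a leaf vertex $v$ of $T$ (one whose outgoing edges are all external), let $i$ denote its unique ingoing edge, and form a simpler tree $T'$ by collapsing $v$ together with its outgoing external edges down to a single external edge labeled $i$. Then $T'$ is a connected $I'$-tree with $I'=(I\setminus I(v))\cup\{i\}$ and $|V(T')|=|V(T)|-1$. The central step is to construct a canonical isomorphism
$$\mathbf{P}_T \xrightarrow{\sim} \mathbf{P}_{T'}\times \hat{\mathbf{P}}^{I(v)}$$
whose first coordinate is the forgetful map sending the points in $I(v)$ to the position of edge $i$, and whose second coordinate records the relative configuration of the points in $I(v)$ in an infinitesimal neighborhood of $v$ modulo local affine symmetries. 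By Proposition \ref{strata}(6), $\mathbf{P}_T=\bigcap_{u\in V(T)}\hat{D}_{E_u}$ inside the $\text{Aff}$-quotient; since for the chosen leaf $v$ we have $E_v=I(v)$, the second projection is extracted from the transverse structure to $\hat{D}_{I(v)}$, which is the exceptional divisor of the blowup along the small diagonal $D_{I(v)}$ in the iterated-blowup construction of $\hat{\mathbf{P}}^I$, and hence is $\hat{\mathbf{P}}^{I(v)}$ by construction. The inverse map glues a configuration of type $T'$ with a relative configuration in $\hat{\mathbf{P}}^{I(v)}$ using an affine chart around the image of edge $i$ in $T'$. Combining this with the inductive hypothesis for $T'$ yields the product decomposition of $\mathbf{P}_T$, and restricting to the complement of all deeper strata gives the open-stratum version $\mathring{\mathbf{P}}_T \simeq \prod_v \mathring{\mathbf{P}}^{I(v)}$. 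A dimension check confirms the claim: $\dim\mathbf{P}_T = |I|-|V(T)|-1 = \sum_{u\in V(T)}(|I(u)|-2)$ using $\sum_u |I(u)| = |I|+|V(T)|-1$.

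The main obstacle will be verifying canonicity of the second projection: one must show that the transverse slice to $\hat{D}_{I(v)}$ along $\mathbf{P}_T$ is intrinsically (not merely up to a coordinate choice) the FM compactification $\hat{\mathbf{P}}^{I(v)}$. This ultimately reduces to the exceptional-divisor calculation of \cite{FM94}, but care is needed to check that the description is preserved both under the further blowups along smaller diagonals that appear in the construction of $\hat{\mathbf{P}}^I$ and under the final quotient by $\text{Aff}$. An alternative route that avoids this local analysis is to invoke the moduli-theoretic description of $\hat{\mathbf{P}}^I$ as a space of stable rooted trees of projective lines (the operadic picture of \cite{BG}), where the vertexwise product decomposition is built into the moduli problem; the proposition then reduces to matching the two moduli interpretations.
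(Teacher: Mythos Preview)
The paper does not actually prove this proposition: it is stated without proof as part of the recalled background on the Fulton--Macpherson compactification, with the reader implicitly referred to \cite{FM94} and \cite{BG,G} for details. So there is no ``paper's own proof'' to compare against.

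Your inductive scheme is the standard way to establish the result and is essentially what underlies the treatment in \cite{FM94}: peel off a leaf vertex $v$, use that the exceptional divisor over the small diagonal $D_{I(v)}$ is a $\hat{\mathbf P}^{I(v)}$-bundle which trivializes canonically after passing to the $\text{Aff}$-quotient, and iterate. Your dimension count is correct, and your identification of the delicate point---canonicity of the second projection through the subsequent blowups and through the $\text{Aff}$-quotient---is exactly where the work lies. The alternative route you mention, via the operadic/moduli description of $\hat{\mathbf P}^I$ as stable pointed rooted trees of projective lines, is in fact the perspective taken in \cite{BG,G}, where the product decomposition is essentially definitional; if you want to minimize local blowup computations, that is the cleaner path.
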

If $T$ consists of just a line, let $\mathbf{P}_T=\mathring{\mathbf{P}}_T=\{pt\}$. For a tree $T$ with connected components $T_1,\dots, T_r$, let $\mathring{\mathbf{P}}_T=\mathring{\mathbf{P}}_{T_1}\times\dots\times\mathring{\mathbf{P}}_{T_r}$. For any stratum $S_T\subset\hat{X}^n$, there are canonical isomorphisms
\begin{equation}\label{strata decomposition}
\bar{S}_T\simeq \mathbf{P}_T\times\hat{X}^J,\;\; S_T\simeq\mathring{\mathbf{P}}_T\times \mathring{X}^J
\end{equation}
where $J$ denotes the set of connected components of $T$. As a corollary, we find 
$$\hat{D}_I\simeq \hat{\mathbf{P}}^I\times \hat{X}^{[n]/I}.$$

\subsection{Construction of the [BG] Sheaf}
Let $I\subset [n]$ be any subset of size at least 2. Let $[n]/I:=[n]\setminus I \cup \{I\}$ be a set of cardinality $n-|I|+1$. Let $\text{Lie}(I)$ denote the Lie operad on $I$. Let $\Omega_{\hat{X}^n,\mathring{X}^n}$,resp. $\Omega(\hat{X}^n,\mathring{X}^n)$, denote sheaf, resp. global sections of the sheaf, of top degree logarithmic differential forms on $\hat{X}^n$ with log poles on $\hat{D}^n$.\footnote{We slightly change notation here to match notation with \cite{BG}}

\begin{prop}\cite[Prop 4.3]{BG}\label{Lie and log} Let $k=\mathbf{Z}$. There is a perfect pairing given by the residue 
 $$\text{Res}:\text{Lie}_k(I)\times \Omega({\hat{\mathbf{P}}_k^I,{\mathring{\mathbf{P}}_k^I}})\rightarrow k.$$
Furthermore, both above $k$-modules are free.
 \end{prop}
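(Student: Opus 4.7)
The plan is to proceed by induction on $n=|I|$, using the boundary stratification of $\hat{\mathbf{P}}^I$ in Proposition~\ref{strata for P} together with the operadic structure of $\text{Lie}$. First I would make the pairing explicit: a fully-bracketed monomial in $\text{Lie}(I)$ corresponds to a binary $I$-tree $T$, and by Proposition~\ref{strata for P} the corresponding closed stratum $\mathbf{P}_T = \prod_v \hat{\mathbf{P}}^{I(v)}$ collapses to a point, realized inside $\hat{\mathbf{P}}^I$ as the transverse intersection of the $n-2$ boundary divisors $\{\hat{D}_{E_v}\}_{v\in T}$. Iterating the residue exact sequence~\eqref{General residue exact sequence} along this chain yields an integer $\langle T,\omega\rangle \in k$; Parshin's identity (Lemma~\ref{Parshin}) ensures that this integer does not depend on the order in which the residues are taken, and the Jacobi relation among the three bracketings of a triple $\{a,b,c\}$ corresponds to the residue theorem on the one-dimensional stratum $\hat{\mathbf{P}}^{\{a,b,c\}}\cong\mathbf{P}^1$.

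The base case $n=2$ is immediate: both sides are free of rank $1 = (n-1)!$. For the inductive step, fix a proper subset $J\subsetneq I$ with $|J|\geq 2$ and write $I/J := (I\setminus J)\sqcup\{*\}$. By Proposition~\ref{strata for P}, the smooth boundary divisor $\hat{D}_J\subset\hat{\mathbf{P}}^I$ factors as $\hat{\mathbf{P}}^J\times\hat{\mathbf{P}}^{I/J}$, and the residue sequence~\eqref{General residue exact sequence} combined with a K\"unneth decomposition produces
\begin{equation*}
0\to \Omega(\hat{\mathbf{P}}^I,\mathring{\mathbf{P}}^I\cup \hat{D}_J)\to \Omega(\hat{\mathbf{P}}^I,\mathring{\mathbf{P}}^I)\xrightarrow{\text{Res}_{\hat{D}_J}} \Omega(\hat{\mathbf{P}}^J,\mathring{\mathbf{P}}^J)\otimes \Omega(\hat{\mathbf{P}}^{I/J},\mathring{\mathbf{P}}^{I/J}).
\end{equation*}
By the inductive hypothesis the right-hand term is free and dual to $\text{Lie}(J)\otimes\text{Lie}(I/J)$. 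Summing over all admissible $J$ and comparing with the operadic decomposition $\text{Lie}(I)\to\bigoplus_J \text{Lie}(J)\otimes\text{Lie}(I/J)$ identifies the residue map with the dual of the Lie cocomposition; this identification is a direct combinatorial check on binary trees.

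To conclude, the intersection of all such residue kernels consists of holomorphic top-degree forms on $\hat{\mathbf{P}}^I$, which vanishes since $\hat{\mathbf{P}}^I$ is a smooth projective rational variety of positive dimension when $n\geq 3$---this matches the Jacobi-type relations cutting out $\text{Lie}(I)$ from the space of formal bracketings. Unimodularity of the pairing matrix over $\mathbb{Z}$, which yields the simultaneous freeness of both sides, is obtained by choosing an explicit $\mathbb{Z}$-basis of $\text{Lie}(I)$ indexed by binary trees (for instance, a Lyndon--Shirshov basis) and verifying that the pairing matrix is triangular with respect to a depth-first ordering of the trees. The main obstacle will be the coherent bookkeeping of the recursion: one must fix a consistent decomposition strategy---say, always splitting off the minimal $J$ containing a distinguished element of $I$---on both sides so that the inductive residue step matches a single operadic comultiplication, and verify $S_I$-equivariance of the pairing throughout.
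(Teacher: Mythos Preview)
The paper does not give a full proof of this proposition: it cites \cite[Prop.~4.3]{BG} and only illustrates the mechanism in the case $|I|=3$, where $\hat{\mathbf{P}}^{[3]}\cong\mathbf{P}^1$ with three marked points, the residue theorem forces $\lambda_{12}+\lambda_{13}+\lambda_{23}=0$, and this single linear relation is identified with the Jacobi identity. Your inductive plan via the boundary stratification and the residue exact sequence is exactly the argument of \cite{BG}, so you are on the same track; a few points deserve tightening.

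First, the independence of the iterated residue on the order of the divisors is not Parshin's Lemma~\ref{Parshin}. On $\hat{\mathbf{P}}^I$ the boundary is already normal crossing, so commutativity (up to sign) is built into the construction of $\Omega^\bullet(\log\hat D)$ and the exact sequence~\eqref{General residue exact sequence}; Parshin's identity in this paper concerns the \emph{non}-normal-crossing diagonals on $X^n$ and is used elsewhere. Second, the step ``summing over all admissible $J$ and comparing with the operadic decomposition'' is where the argument is loosest: the total cocomposition $\text{Lie}(I)\to\bigoplus_J\text{Lie}(J)\otimes\text{Lie}(I/J)$ is highly non-injective (many $J$'s see the same bracket), so matching it with $\bigoplus_J\text{Res}_{\hat D_J}$ does not by itself give perfectness. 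The clean route---and the one you sketch at the end---is to bypass this and work directly with a single chain of nested subsets $J_1\subset J_2\subset\cdots\subset I$ corresponding to the right-nested basis $[x_{\sigma(1)},[\ldots,[x_{\sigma(n-1)},x_n]\ldots]]$, $\sigma\in S_{n-1}$; the dual forms $\prod_i d\log(z_{\sigma(i)}-z_n)$ give a basis of $\Omega(\hat{\mathbf{P}}^I,\mathring{\mathbf{P}}^I)$ (this is Arnol'd's relations, and is what the vanishing $H^0(\hat{\mathbf{P}}^I,\Omega^{\text{top}})=0$ buys you), and the pairing matrix in these bases is the identity, which gives freeness and unimodularity simultaneously without a separate triangularity check.
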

The proof of this proposition is modeled by the $I=[3]$ case, for which it then follows from the Jacobi identity together with the property that since there are no global sections of $\Omega_{\mathbf{P}^1}$, log forms $\Omega_{\hat{\mathbf{P}}^{[3]},\mathring{\mathbf{P}}^{[3]}} = \Omega_{\mathbf{P}^1}(\text{log}(D))$ for $D=\{[0:1],[1:0],[1:1]\}$, are determined completely by their residue. Thus,
 $$\Omega(\hat{\mathbf{P}}^{[3]},\mathring{\mathbf{P}}^{[3]}) = \{\lambda_{23}\frac{dz_{23}}{z_{23}}+\lambda_{13}\frac{dz_{13}}{z_{13}}+\lambda_{12}\frac{dz_{12}}{z_{12}}: \lambda_{ij}\in k,\lambda_{23}+\lambda_{13}+\lambda_{12}=0\}$$
 where $z_{ij}=z_i-z_j$. Then the pairing is given by $\text{Res}_{z_{ij}=0}(\omega)=\lambda_{ij}$. And the Jacobi identity on $\text{Lie}(I)$ corresponds to sum of coefficients of $\omega\in \Omega(\hat{\mathbf{P}}^{[3]},\mathring{\mathbf{P}}^{[3]})$ equals $0$.

This pairing of Proposition \ref{Lie and log} provides a canonical distinguished linear map 
$$\phi_I:\mathfrak{g}^{\otimes I}\rightarrow \mathfrak{g}\otimes\Omega({\hat{\mathbf{P}}^I,{\mathring{\mathbf{P}}^I}})$$
characterized by the property that, for all $I$-binary trees $T$, $\text{Res}_T(\Phi_I(\xi_1\otimes\dots\otimes\xi_I))$ consists of a Lie-bracket expression involving insertion $[\xi_i,\xi_j]$ whenever edges $i,j$ share a vertex in $T$. Here, 
\begin{equation}\label{Res_T}
\text{Res}_T:=\text{Res}_{\hat{D}_1\cap\cdots\cap\hat{D}_r}\circ\cdots\circ\text{Res}_{\hat{D}_{r-1}\cap\hat{D}_r}\circ\text{Res}_{\hat{D}_r}
\end{equation}
where $\hat{D}_1,\dots,\hat{D}_r$ are the irreducible diagonal divisors corresponding to each of the $r$ vertices of the binary tree $T$. The order of the $\hat{D}_i$ taken does not affect $\text{Res}_T$ because of normal crossing. Denote the linear adjoint of $\phi_I$ by 
\begin{equation}\label{The fibers phi_I^*}
\phi_I^*:\mathfrak{g}^*\rightarrow(\mathfrak{g}^*)^{\otimes I}\otimes \Omega({\hat{\mathbf{P}}^I,{\mathring{\mathbf{P}}^I}}).
\end{equation}

 We now explain $\phi_I^*$ is $\text{Ad}_g$-invariant for any $g\in G$. Using Proposition \ref{Lie and log}, we may replace $\Omega(\hat{\mathbf{P}}^I,\mathring{\mathbf{P}}^I)$ with $\text{Lie}(I)^*$ since the Residue is $\text{Ad}_g$-equivariant. For simplicity, write $I=[d]$. It is well known $\text{Lie}(I)$ has dimension $(d-1)!$ and a basis is given by $\mathbf{x}_\sigma:=[x_{\sigma(1)},[\dots,[x_{\sigma(d-1)},x_d]\dots]]$ for ${\sigma\in S_{d-1}}$. Then explicitly, 
 $$\phi_I^*:A\mapsto (x_1\otimes\dots\otimes x_d\mapsto A(\mathbf{x}_\sigma))_{\sigma\in S_{d-1}}.$$
This expression is clearly $\text{Ad}_g$-invariant, as desired. 
 
Thus, we can construct a relative version of $\phi_I^*$, denoted by $\Phi_I^*$. By relative, we mean $\Phi_I^*$ is a $\mathcal{O}_{\hat{D}_I}$-module morphism whose fibers are $\phi_I^*$. Let $P$ be a principal $G$-bundle over $X$. Let $\mathfrak{g}_P:=\mathfrak{g}\otimes\mathcal{O}_P/G$ be the associated adjoint bundle on $X$. Let $p:\hat{X}^n\rightarrow X^n$ be the Fulton-Macpherson compactification, and denote $\hat{\mathfrak{g}}_P^{\boxtimes n}:=p^*(\mathfrak{g}_P^{\boxtimes n})$ the associated bundle on $\hat{X}^n$. Similarly, denote $(\hat{\mathfrak{g}}_P)^{\boxtimes [n]/I}:=p^*(\mathfrak{g}_P^{\boxtimes [n]/I})$ for $p:\hat{X}^{[n]/I}\rightarrow X^{[n]/I}$.

 First, we tensor $\phi_I^*$ on the left by $\mathcal{O}_P$ and (external) on the right by $\mathcal{O}_{\hat{\mathbf{P}}^I}$ to obtain the following morphism of $\mathcal{O}_{X\times\hat{\mathbf{P}}^I}$-modules
$$\Phi_I^*:\mathcal{O}_P\otimes\mathfrak{g}^*\boxtimes\mathcal{O}_{\hat{\mathbf{P}}^I}\rightarrow \mathcal{O}_P\otimes(\mathfrak{g}^*)^{\otimes I}\boxtimes\Omega_{\hat{\mathbf{P}}^I,\mathring{\mathbf{P}}^I}.$$
The $\text{Ad}_G$ equivariance allows us to descend to a map of $\mathcal{O}_{X\times\hat{\mathbf{P}}^I}$-modules
\begin{equation}\label{Phi_I^* step 1}
\Phi_I^*:\mathfrak{g}_P^*\boxtimes\mathcal{O}_{\hat{\mathbf{P}}^I}\rightarrow(\mathfrak{g}_P^*)^{\otimes I}\boxtimes\Omega_{\hat{\mathbf{P}}^I,\mathring{\mathbf{P}}^I}.
\end{equation}

Now, we have $\hat{D}_I \simeq \hat{X}^{[n]/I}\times\hat{\mathbf{P}}^I$ and $D_I\simeq X^{[n]\setminus I}\times X$. Consider the following commutative diagram 

\begin{equation}\label{diagonal diagram}
\begin{tikzcd}
  & \hat{D}_I \arrow[d, "p_2"] \arrow[r, "p_1"] \arrow[ldd, "f_1"'] \arrow[rdd, "f_2"] & \hat{\mathbf{P}}^I   \\
  & {\hat{X}^{[n]/I}} \arrow[ld, "\tilde{f}_1"] \arrow[rd, "\tilde{f}_2"']                                            &                      \\
X &                                                                                    & {X^{[n]\setminus I}}
\end{tikzcd}
\end{equation}
Then we have 
\begin{align}\label{isomorphisms along diagonal}
\Omega_{\hat{D}_I,\mathring{D}_I}&\simeq p_1^*(\Omega_{\hat{\mathbf{P}}^I,\mathring{\mathbf{P}}^I})\otimes p_2^*(\Omega_{\hat{X}^{[n]/I},\mathring{X}^{[n]/I}})\\
(\hat{\mathfrak{g}}_P^*)^{\boxtimes n}\vert_{\hat{D}_I} &\simeq f_1^*(\mathfrak{g}_P^*)^{\otimes I}\otimes f_2^*((\mathfrak{g}_P^*)^{\boxtimes [n]\setminus I})\\
p_2^*((\hat{\mathfrak{g}}_P^*)^{\boxtimes [n]/I}) &\simeq f_1^*(\mathfrak{g}_P^*)\otimes f_2^*((\mathfrak{g}_P^*)^{\boxtimes [n]\setminus I})
\end{align}

Next, if we take Equation \ref{Phi_I^* step 1} and pullback along the map $(f_1,p_1):\hat{D}_I\rightarrow X\times\hat{\mathbf{P}}^I$, and using pullback commutes with tensor product, and that pullback along a projection corresponds to taking external tensor product with structure sheaf, we obtain the following morphism of $\mathcal{O}_{\hat{D}_I}$-modules:

\begin{equation}\label{Phi_I^* step 2}
\Phi_I^*:f_1^*(\mathfrak{g}_P^*)\rightarrow f_1^*(\mathfrak{g}_P^*)^{\otimes I}\otimes p_1^*(\Omega_{\hat{\mathbf{P}}^I,\mathring{\mathbf{P}}^I}).
\end{equation}

Next, we tensor the above map by $-\otimes f_2^*((\mathfrak{g}_P^*)^{\boxtimes [n]\setminus I})\otimes p_2^*(\Omega_{\hat{X}^{[n]/I},\mathring{X}^{[n]/I}})$ and use the three identifications in \ref{isomorphisms along diagonal} to obtain the map of $\mathcal{O}_{\hat{D}_I}$-modules \footnote{This corrects a few typos present in Equation 7.3 [BG]}

 \begin{equation}\label{Psi_I^* final}
 \Psi_I^*:p_2^*\big((\hat{\mathfrak{g}}_P^*)^{\boxtimes [n]/I}\otimes\Omega_{\hat{X}^{[n]/I},\mathring{X}^{[n]/I}}\big)\rightarrow (\hat{\mathfrak{g}}_P^*)^{\boxtimes n}\vert_{\hat{D}_I} \otimes \Omega_{\hat{D}_I,\mathring{D}_I}
 \end{equation}
 
 Next, observe that since $\mathfrak{g}$ is semi-simple, the Lie-bracket map is surjective. From this it directly follows $\phi_I^*$ is injective. Then at each step of the construction leading to $\Psi_I^*$, we preserved left-exactness (since we first pulled-back, then took tensor product with a locally-free module). Thus $\Psi_I^*$ is an embedding and its image $\text{Im}(\Psi_I^*)$ is a locally-free sheaf on $\hat{D}_I$ with fiber $(\mathfrak{g}^*)^{[n]/I}$.
  
We are ready to state the main definition:
\begin{definition} Define the ``BG sheaf'' on $\hat{X}^n$ of logarithmic differential forms with prescribed residue along the diagonal
$$\hat{\mathcal{G}}_n:=\{\omega\in(\hat{\g}_P^*)^{\boxtimes n}\otimes\Omega_{\hat{X}^n,\mathring{X}^n}:\;\text{Res}_{\hat{D}_I}(\omega)\in\text{Im}(\Psi_I^*)\}$$
\end{definition}
Note, $S_n$ acts naturally on $X^n$, and this induces, by functoriality, an action on $\hat{X}^n$ making $(\hat{\mathfrak{g}}^*_P)^{\boxtimes n}, \Omega_{\hat{X}^n,\mathring{X}^n},\hat{\mathcal{G}}_n$ all into $S_n$-equivariant sheaves. Next, $S_n$ acts on all $[n]$-trees by permuting the labelings. Given an $[n]$-tree $T$ and $\sigma\in S_n$, we have $\sigma(S_T)=S_{\sigma(T)}$. Thus, $S_n(T)$, the isotropy group of $T$, acts on $S_T$, and we denote:

\begin{definition}\label{antiinvariants of BG sheaf}
Let
\begin{equation*}H^0(\hat{X}^n,\hat{\mathcal{G}}_n)^{-S_n}:=
									\bigg\{  s\in H^0(\hat{X}^n,\hat{\mathcal{G}}_n) :
									\begin{aligned}
									& \text{For each strata } S_T \text{ and } \sigma\in S_T,\\
									&\text{we have }\sigma\text{Res}_T(s)=\text{sign}(\sigma)\cdot\text{Res}_T(s)\\
									\end{aligned}
									\bigg\}
									\end{equation*}
\end{definition}

Now, we explain how to upgrade the construction of $\Psi_I^*$ to $\Psi_T^*$, where $T$ is a tree. Let $I_1,I_2 \subset [n]$ and suppose $i\in I_1$. Let $I:= I_1\circ_i I_2:=(I_1\setminus i)\sqcup I_2$. Given a connected $I_1$-grove $T_1$ and connected $I_2$-grove $T_2$, each with a single vertex (i.e ``stars''), we may define $T:=T_1\circ_i T_2$ by inserting at the outgoing edge $i$ of $T_1$, the unique ingoing external edge of $T_2$. We caution that $T_2\circ T_1\neq T_1\circ T_2$ in general. Let $\hat{D}_{I_1},\hat{D}_{I_2}$ be the corresponding irreducible divisors. We know the corresponding stratum $\bar{S}_T=\hat{D}_{I_1}\cap\hat{D}_{I}$. 
By Proposition \ref{strata for P} and Equation \ref{strata decomposition}, we know 
$$\Omega_{\mathbf{P}_T,\mathring{\mathbf{P}}_T}\simeq\Omega_{\hat{\mathbf{P}}^{I_1},\mathring{\mathbf{P}}^{I_1}}\boxtimes\Omega_{\hat{\mathbf{P}}^{I_2},\mathring{\mathbf{P}}^{I_2}},\;\;\text{ and } \Omega_{\bar{S}_T,S_T} = \Omega_{\mathbf{P}_T,\mathring{\mathbf{P}}_T}\boxtimes\Omega_{\hat{X}^{[n]/I},\mathring{X}^{[n]/I}}.$$

Consider the following commutative diagrams involving $S_T$:
$$
\begin{tikzcd}
  & \bar{S}_T \arrow[d, "{p_{T,2}}"] \arrow[ldd, "f_1"'] \arrow[rdd, "f_2"]  &                      &                        & \bar{S}_T\arrow[r, "{i_T}"]\ \arrow[d, "{p_{T,1}}"] \arrow[ldd, "p_1"'] \arrow[rdd, "p_2"] &       \hat{D}_I &                 \\
  & {\hat{X}^{[n]/I}} \arrow[ld] \arrow[rd]                           &                      &                        & \mathbf{P}_T \arrow[ld] \arrow[rd]                                &                        \\
X &                                                                   & {X^{[n]\setminus I}} & \hat{\mathbf{P}}^{I_1} &                                                                   & \hat{\mathbf{P}}^{I_2}
\end{tikzcd}
$$

Thus, taking Equation \ref{Phi_I^* step 2} for $I_1$, restricting to $S_T\hookrightarrow\hat{D}_{I_1}$, and then applying $\circ_i\Phi^*_{I_2}$ (i.e apply $\Phi_{I_2}^*$ at the $i$th copy of $f_1^*(\mathfrak{g}_P^*)$ in $f_1^*(\mathfrak{g}_P^*)^{\otimes I_1}$), we obtain a morphism of sheaves on $\bar{S}_T$

\begin{equation}\label{Phi_T^* plain}
\Phi_T^*:f_1(\mathfrak{g}_P^*)\xrightarrow{\Phi_{I_1}^*}f_1^*(\mathfrak{g}_P^*)^{\otimes I_1}\otimes p_1^*(\Omega_{\hat{\mathbf{P}}^{I_1},\mathring{\mathbf{P}}^{I_1}})\xrightarrow{_i\circ\Phi_{I_2}^*}f_1^*{(\mathfrak{g}_P^*)^{\otimes I}}\otimes p_{T,1}^*(\Omega_{\mathbf{P}_T,\mathring{\mathbf{P}}_T})
\end{equation}

\noindent (Recall $I=I_1\circ_i I_2$) Finally, tensor both sides by $f_2^*((\mathfrak{g}_P^*)^{\boxtimes [n]\setminus I})\otimes p_{T,2}^*(\Omega_{\hat{X}^{[n]/I},\mathring{X}^{[n]/I}})$ to obtain the desired morphism of $\bar{S}_T$-modules

\begin{equation}\label{Psi_T^* final}
\Psi_T^*:p_{T,2}^*\big((\hat{\mathfrak{g}}_P^*)^{\boxtimes [n]/I}\otimes\Omega_{\hat{X}^{[n]/I},\mathring{X}^{[n]/I}}\big)\rightarrow (\hat{\mathfrak{g}}_P^*)^{\boxtimes n}\vert_{\bar{S}_T} \otimes \Omega_{\bar{S}_T,{S}_T}
\end{equation}

Again, since $\mathfrak{g}$ is semisimple, this morphism is a locally-split embedding, and its image is a locally free sheaf on $\bar{S}_T$ with fibers $(\mathfrak{g}^*)^{\otimes I}$. We make the remark that $\Psi_T^*$ may only be defined on $\bar{S}_T$, for the composition of $\Phi_{I_1}^*$ and $\Phi_{I_2}^*$  is only well-defined on the intersection of their supports, i.e $\bar{S}_T$. 

Next, observe the key defining property of $\Psi_I^*$ is the commutative diagram of sheaves on $\bar{S}_T$ for all $I$ trees $T$ (still keeping the $I=I_1\circ_i I_2\circ\cdots, T=T_1\circ_i T_2\circ\cdots$ notation, and where $\text{Res}_T$ is as defined in Equation \ref{Res_T}):

$$\begin{tikzcd}
{p_{T,2}^*((\hat{\mathfrak{g}}^*_P)^{\boxtimes [n]/I}\otimes\Omega_{\hat{X}^{[n]/I},\mathring{X}^{[n]/I}}}) \arrow[rd, "\Psi_T^*"'] \arrow[r, "\Psi_I^*"] & {(\hat{\mathfrak{g}}_P^*)^{\boxtimes n}\vert_{\bar{S}_T}\otimes\Omega_{\hat{D}_I,\mathring{D}_I}\vert_{\bar{S}_T}} \arrow[d, "\text{Res}_{T}"] \\
                                                                                                                                                   & {(\hat{\mathfrak{g}}_P^*)^{\boxtimes n}\vert_{\bar{S}_T}\otimes\Omega_{\bar{S}_T,S_T}}                                                                                
\end{tikzcd}$$
In summary, we have
\begin{equation}\label{compatibility}
\text{Res}_{T}\circ(\Psi_{I}^*\vert_{S_T}) = \Psi_T^*\;\;\;\text{ for all I-trees T.} \end{equation}

Next, let $I$ be a $d$ element subset of $[n]$, which for simplicity of notation we suppose $I=[d]$. Let $T_0$ denote that binary $I$-tree $T_0=[1,[2,\dots, [d-1,d]\dots]$, and let $\bar{S}_{T_0}:=\hat{D}_{[d]}\cap\hat{D}_{\{2,3,\dots, d\}}\dots\cap\hat{D}_{\{d-1,d\}}$ denote the corresponding strata in $\hat{D}_{[d]}$, and denote by $p_0=p_{T_0,1}(S_{T_0})$ be the corresponding point in $\hat{\mathbf{P}}^I$ so that $\bar{S}_{T_0} \simeq\hat{X}^{[n]/I}\times\{p_0\}$. Next, for $\sigma\in S_{d-1} = \text{Stab}_{S_d}(d)$, denote the binary $I$-tree $T_\sigma:=\sigma.T_0 = [\sigma(1),[\sigma(2),\dots, [\sigma(d-1),d]\dots]]$. Similarly, let $p_\sigma:=\sigma(p_0)\in\hat{\mathbf{P}}^I$ be the corresponding point. Let $i_\sigma:\{p_\sigma\}\hookrightarrow\hat{\mathbf{P}}^I$ denote the inclusion. Finally, let $\text{adj}_T:\mathcal{F}\rightarrow (i_T)_*i_T^*(\mathcal{F})$ be the unit map for $\mathcal{F}$ a sheaf on $\hat{\mathbf{P}}^I$, and denote the locally free $\hat{X}^{[n]/I}$-modules by
 $$\mathcal{F}_{[n]/I}:=(\hat{\mathfrak{g}}_P^*)^{\boxtimes [n]/I}\otimes\Omega_{\hat{X}^{[n]/I},\mathring{X}^{[n]/I}}.$$

$$\mathcal{E}^{[n]/I}:= \tilde{f}_1(\mathfrak{g}_P^*)^{\otimes I}\otimes \tilde{f}_2((\mathfrak{g}_P^*)^{\boxtimes [n]\setminus I})\otimes\Omega_{\hat{X}^{[n]/I},\mathring{X}^{[n]/I}}$$

\noindent Thus, we may rewrite the compatibility equation \ref{compatibility} as a commutative diagram of $\hat{D}_I=\hat{X}^{[n]/I}\times\hat{\mathbf{P}}^I$-modules:

\begin{equation}\label{compatibility diagram}
\begin{tikzcd}
{\mathcal{F}_{[n]/I}\boxtimes\mathcal{O}_{\hat{\mathbf{P}}^I}} \arrow[r, "\Psi_I^*"] \arrow[d, "(\text{adj}_{T_\sigma})"'] & {\mathcal{E}_{[n]/I}\boxtimes\Omega_{\hat{\mathbf{P}}^I,\mathring{\mathbf{P}}^I}} \arrow[d, "(\text{Res}_{T_\sigma})"] \\
{\bigoplus_{\sigma\in S_{d-1}}\mathcal{F}_{[n]/I}\boxtimes(i_{\sigma})_*(\{p_\sigma\})} \arrow[r, "(\Psi_{T_\sigma}^*)"']         & {\bigoplus_{\sigma\in S_{d-1}}\mathcal{E}_{[n]/I}\boxtimes(i_{\sigma})_*(\{p_\sigma\})}                                                          
\end{tikzcd}
\end{equation}
On global sections, the left vertical map is the diagonal embedding, the right vertical is an isomorphism, by Proposition \ref{Lie and log}, and the horizontal maps are embeddings.

\subsection{The pullback}
\begin{definition}\label{BD sheaf}
Define the ``BD sheaf'' $\mathcal{G}_n$ over $X^n$ whose local sections are:
 \begin{align*}
\mathcal{G}_n:=\{\omega\in(\mathfrak{g}_P^*)^{\boxtimes n}\otimes\tilde{\Omega}_{X^n}(\text{log}(D)) :\text{For all $ d\leq n$ and $\sigma\in S_{d-1}$,\;}\text{Res}_{T_{\sigma}}(\omega)\in\text{Im}(\Psi_{T_{\sigma}}^*)\}
\end{align*}
where for a binary $d$-tree $T_{\sigma}=[\sigma(1),[\dots,[\sigma(d-1),\sigma(d)]\dots]$, we denote 
$$\text{Res}_{T_{\sigma}}:=\sigma(\text{Res}_{D_{[d]}}\circ\text{Res}_{D_{\{2,\dots, d\}}}\circ\dots\circ\text{Res}_{D_{\{d-1,d\}}}),$$ and $\Psi_{T_{\sigma}}^*$ is the corresponding Lie cobracket expression as defined in Equation \ref{Psi_T^* final}.
\end{definition}
\noindent Note, we denote $\Psi_T^*$ for both the map of sheaves on $X^n$ and on $\hat{X}^n$ because the latter is induced by the former under pullback, and they are both induced by the same corresponding cobracket expression on fibers. Also, analogues to Definition \ref{antiinvariants of BG sheaf}, we may define $H^0(X^n,\mathcal{G}_n)^{-S_n}$. Since $p:\hat{X}^n\rightarrow X^n$ is a proper map, the global sections of $\mathcal{G}_n$ are equal to that $p^*\mathcal{G}_n$. The sheaf $\mathcal{G}_n$ over $X^n$ is related to the universal sheaf $\Omega_n$ over $X$ (recall Definition \ref{The universal space}) in the following way:
$$\Gamma(X,\Omega_n)\xrightarrow{\sim}\Gamma(X^n,\mathcal{G}_n)^{-S_n}\xleftarrow{\sim}\Gamma(\hat{X}^n,p^*\mathcal{G}_n)^{-S_n}$$
where first map is $\omega=(\omega_i)_{0\leq i\leq n}\mapsto\omega_n$.

Next, we wish to relate $p^*\mathcal{G}_n$ with $\hat{\mathcal{G}}_n$, where $p:\hat{X}^n\rightarrow X^n$ is the Fulton-Macpherson compactification. A simple, yet important, observation is that on the smooth locus of $D$,
\begin{equation}\label{first compatibility of residue pullback} 
\text{Res}_{\hat{D}}(p^*\omega)\vert_{\hat{D}_{ij}} =p^*( \text{Res}_{D}(\omega)\vert_{D_{ij}})\vert_{\hat{D}_{ij}}
\end{equation}
This follows because the singular locus of $D$ is $D^{\text{sing}} = \bigcup_{ijk} D_{ijk}$ and $p$ is an isomorphism on $p^{-1}(X^n\setminus D^{\text{sing}}) = \hat{X}^n\setminus\bigcup_{I\subset [n]: |I|\geq 3} \hat{D}_I.$

Following the same reasoning \footnote{For instance, the regular locus $D_{123}^{\text{reg}}$ inside $D_{12}$ is $D_{123}\setminus \bigcup_{i\geq 4}D_{1234}$, and this equals the regular locus $(\hat{D}_{12}\cap\hat{D}_{123})^{\text{reg}}=\hat{D}_{123}\setminus \bigcup_{[3]\subset I, |I|\geq 4}\hat{D}_I$ inside $\hat{D}_{12}$.}, we find in general that for any tree of the form $T_\sigma$, 
\begin{equation}\label{next compatibility of residue pullback}\text{Res}_{{T}_\sigma}(p^*\omega) =\lambda_\sigma p^*( \text{Res}_{T_\sigma}(\omega))\vert_{S_{T_\sigma}}.
\end{equation}
where $\text{Res}_{T_{\sigma}}$ is computed using Definition \ref{Res_T} on the left and Definition \ref{BD sheaf} on the right. The scalar $\lambda_\sigma\in\mathbf{Z}_{>0}$ appears because $\hat{D}_I$, for $|I|\geq 3$, appears with some multiplicity inside the non-reduced locus $p^{-1}(D)$, and there is the remarkable property that $\frac{dh^\alpha}{h^\alpha}=\alpha\frac{dh}{h}$. Note that in our case, local sections $\omega\in\mathcal{G}_n$ may be written as $\omega=\sum_{\sigma\in S_n}a_\sigma\omega_\sigma$  for $\omega_\sigma\in(\mathfrak{g}_P^*)^{\boxtimes n}\otimes\Omega_{X^n}(D_\sigma)$. And, $\text{Res}_{T_\sigma}(\omega)=a_\sigma\text{Res}_{T_\sigma}(\omega_\sigma)$. So, Equation \ref{next compatibility of residue pullback} is really a statement about forms with poles along a normal crossing divisor, and consequently there is no ambiguity in the order in which we take $\text{Res}_{T_\sigma}(\omega)$ downstairs. We will suppress this integer $\lambda_\sigma$ from notation as it does not affect the results.

\begin{lemma}\label{pullback lands in BG}
Suppose $\omega\in\Gamma(X^n,\mathcal{G}_n)^{-S_n}$. Then $p^*\omega\in\Gamma(\hat{X}^n,\hat{\mathcal{G}}_n)^{-S_n}$.
\end{lemma}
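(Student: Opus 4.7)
The plan is to verify the three conditions defining a section of $\Gamma(\hat{X}^n,\hat{\mathcal{G}}_n)^{-S_n}$: (i) that $p^*\omega$ is a section of $(\hat{\mathfrak{g}}_P^*)^{\boxtimes n}\otimes\Omega_{\hat{X}^n,\mathring{X}^n}$; (ii) that it is $-S_n$-anti-invariant; and (iii) that for every $I\subset[n]$ with $|I|\geq 2$, one has $\text{Res}_{\hat{D}_I}(p^*\omega)\in\text{Im}(\Psi_I^*)$.

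Claim (i) follows immediately from Lemma \ref{pullback of NCD}: since $p$ is an isomorphism over $\mathring{X}^n$ and, by the Fulton--Macpherson construction, $\hat{D}=(p^{-1}(D))_{\text{red}}$ is a normal crossing divisor, the pullback $p^*\tilde{\Omega}_{X^n}(\log D)$ lies in $\Omega_{\hat{X}^n}(\log\hat{D})=\Omega_{\hat{X}^n,\mathring{X}^n}$. Claim (ii) is immediate from the $S_n$-equivariance of $p$, which guarantees that pullback intertwines the $S_n$-actions on both sides and is compatible with the stratum-wise condition of Definition \ref{antiinvariants of BG sheaf}.

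The substance lies in (iii). Fix $I\subset[n]$ of size $d\geq 2$ and invoke the compatibility diagram \ref{compatibility diagram}. Its right vertical arrow $(\text{Res}_{T_\sigma})_\sigma$ is an isomorphism on global sections; here is where Proposition \ref{Lie and log} enters, trivializing $\Omega_{\hat{\mathbf{P}}^I,\mathring{\mathbf{P}}^I}$ into $\text{Lie}(I)^*$, whose dual basis is indexed by the binary trees $T_\sigma$. Commutativity of the diagram, together with injectivity of the horizontal maps, then identifies $\text{Im}(\Psi_I^*)$ under this isomorphism with the image of the diagonal $\tau\mapsto(\Psi_{T_\sigma}^*(\tau))_\sigma$. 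Thus it suffices to produce a single $\tau\in\Gamma(\hat{X}^{[n]/I},\mathcal{F}_{[n]/I})$ for which $\text{Res}_{T_\sigma}(p^*\omega)=\Psi_{T_\sigma}^*(\tau)$ for every $\sigma\in S_{d-1}$. Applying Equation \ref{next compatibility of residue pullback} together with the downstairs condition $\text{Res}_{T_\sigma}(\omega)=\Psi_{T_\sigma}^*(\tau_\sigma)$ guaranteed by $\omega\in\mathcal{G}_n$ yields $\text{Res}_{T_\sigma}(p^*\omega)=\lambda_\sigma\Psi_{T_\sigma}^*(p^*\tau_\sigma)$, so the task reduces to showing the collection $(\lambda_\sigma\cdot p^*\tau_\sigma)_\sigma$ is $\sigma$-independent.

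Producing this single $\tau$ is the main obstacle. The key tool is the $-S_n$-anti-invariance of $\omega$ combined with the equivariance of the construction $\Psi_T^*$: since $\mathcal{F}_{[n]/I}$ carries no $S_I$-action (the $I$-indices have been collapsed), one has $\sigma\cdot\Psi_{T_0}^*(\tau)=\Psi_{T_\sigma}^*(\tau)$, while anti-invariance of $\omega$ gives $\text{Res}_{T_\sigma}(\omega)=\text{sign}(\sigma)\cdot\sigma\cdot\text{Res}_{T_0}(\omega)$. Comparing these forces $\tau_\sigma=\text{sign}(\sigma)\tau_0$ by injectivity of $\Psi_{T_\sigma}^*$. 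A careful local analysis of $p$ near $\hat{D}_I$ then shows that the blow-up multiplicities $\lambda_\sigma$ arising from the non-reduced structure of $p^{-1}(D)$ exactly absorb this sign, so that $\lambda_\sigma p^*\tau_\sigma$ is indeed $\sigma$-independent. This sign-versus-multiplicity bookkeeping is the delicate part, morally parallel to the compatibility of the anti-invariance conditions upstairs and downstairs, and pinning it down requires explicitly computing how $\hat{D}_I$ (for $|I|\geq 3$) appears in $p^*(D_I)$ on the sequence of blow-ups defining $\hat{X}^n$.
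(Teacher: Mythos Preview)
Your overall architecture matches the paper's almost exactly: use Lemma \ref{pullback of NCD} for step (i), reduce the residue condition on $\hat{D}_I$ to the binary trees $T_\sigma$ via the compatibility diagram \ref{compatibility diagram} and Proposition \ref{Lie and log}, transfer the downstairs constraint through Equation \ref{next compatibility of residue pullback}, and then use anti-invariance to compare the $\sigma$-indexed data. The paper carries out the anti-invariance step entirely upstairs with $p^*\omega$, you do it downstairs with $\omega$; these are equivalent and both yield $\tau_\sigma=\operatorname{sign}(\sigma)\tau_0$ (equivalently $A_\sigma=\operatorname{sign}(\sigma)A_0$).

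The one genuine problem is your final sentence. The claim that ``the blow-up multiplicities $\lambda_\sigma$ \ldots exactly absorb this sign'' is false and cannot be rescued by local analysis: the paper states just before the lemma that $\lambda_\sigma\in\mathbf{Z}_{>0}$, and by $S_n$-equivariance of the Fulton--MacPherson map all the $\lambda_\sigma$ (for $\sigma\in S_{d-1}$ acting on the same tree shape $T_0$) are equal. Positive integers that are all equal cannot produce alternating signs. This is exactly why the paper \emph{suppresses} $\lambda_\sigma$ altogether---it is a harmless overall constant, not the mechanism that reconciles the signs. The actual reconciliation in the paper's argument is that the left vertical arrow of diagram \ref{compatibility diagram} is the diagonal inclusion ``up to signs'': the tuple $(\operatorname{sign}(\sigma)A_0)_\sigma$ is precisely what the image of $A_0$ under that signed diagonal looks like, so commutativity and injectivity of the horizontals force $\operatorname{Res}_{\hat{D}_I}(p^*\omega)=\Psi_I^*(A_0)$. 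Your reduction ``$(\lambda_\sigma p^*\tau_\sigma)_\sigma$ must be $\sigma$-independent'' is therefore slightly mis-stated; what is needed is that this tuple lie in the image of the (signed) diagonal, and that is exactly what $\tau_\sigma=\operatorname{sign}(\sigma)\tau_0$ gives you. Replace the multiplicity remark with this observation and your proof is complete and essentially identical to the paper's.
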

\begin{proof}
First, Lemma \ref{pullback of NCD} (and see Remark \ref{justify NCD log forms}) shows
$$p^*\omega\in\Gamma(\hat{X}^n,(\hat{\mathfrak{g}}_P^*)^{\boxtimes n}\otimes\Omega_{\hat{X}^n}(\text{log}(\hat{D})))^{-S_n}.$$
Thus, it remains to check $\text{Res}_{\hat{D}_I}(p^*\omega)\in\text{Im}(\Psi_I^*)$ for all $I\subset [n]$. By symmetry, it suffices to check for $I=[d]$ for each $2\leq d\leq n$. A summary of the following proof is that there are no global sections of $\Omega_{\hat{\mathbf{P}}^I}$, so logarithmic differential forms $\Omega({\hat{\mathbf{P}}^I,\mathring{\mathbf{P}}^I})$ are completely determined by all possible residues $\text{Res}_T(\omega)$, for $T$ a $I$-binary tree, in the sense that there exists a unique log form with those prescribed residues.

We know by \ref{next compatibility of residue pullback} that for any binary $d$-tree of the form $T_\sigma$,
$$\text{Res}_{T_\sigma}(p^*\omega)=p^*\text{Res}_{T_\sigma}(\omega).$$
 This means for each binary $d$-tree of the form $T_\sigma,\sigma\in S_{d-1}$, we have 
$$\text{Res}_{T_\sigma}(p^*\omega)\in\text{Im}(\Psi_{T_\sigma}^*).$$
Also, note the because of transverse intersection, the order in which we compute the iterated residue in $\text{Res}_{T_\sigma}(p^*\omega)$ does not matter, up to sign. So if we start with $\text{Res}_{\hat{D}_I}(p^*\omega)\in \mathcal{E}_{[n]/I}\boxtimes\Omega_{\hat{\mathbf{P}}^I,\mathring{\mathbf{P}}^I}$, then from diagram \ref{compatibility diagram}, we can conclude its image under right vertical arrow comes from the image of the left horizontal arrow. Namely, there exist $A_\sigma\in\mathcal{F}_{[n]/I}$ such that 
\begin{equation}
\text{Res}_{T_\sigma}(p^*\omega) = \Psi_{T_\sigma}^*(A_\sigma).
\end{equation}

Next, we show $S_n$-anti-invariance of $p^*\omega$ implies $A_\sigma=\text{sign}(\sigma)A_0$ for all $\sigma$. Indeed:
\begin{equation}\label{A_sigma and A_0}
\Psi_{T_\sigma}^*(A_\sigma)=\text{Res}_{T_\sigma}(p^*\omega) = \text{sign}(\sigma).\sigma\text{Res}_{T_0}(p^*\omega)=\text{sign}(\sigma).\sigma\Psi_{T_0}^*(A_0).
\end{equation}

Next, recall $\Psi_{T_\sigma}^*$ was defined so that its fibers are the maps
$$\psi_{T_\sigma}^*:\mathfrak{g}^*\rightarrow (\mathfrak{g}^*)^{\otimes I},\;\; A_\sigma\mapsto (x_1\cdots x_d\mapsto A_\sigma([x_{\sigma(1)},\dots, [x_{\sigma(d-1)},x_d]\dots]))$$
So, $\Psi_{T_\sigma}^*(A_\sigma) = \sigma\Psi_{T_0}^*(A_\sigma)$ and consequently equation \ref{A_sigma and A_0} implies
$$\Psi_{T_0}^*(A_\sigma)=\Psi^*_{T_0}(\text{sign}(\sigma)A_0).$$
But, $\mathfrak{g}$ is semsimple, so $\Psi^*_{T_0}$ is injective, thus $A_\sigma=\text{sign}(\sigma)A_0$ for all $\sigma$. The left vertical arrow in \ref{compatibility diagram} is just the diagonal inclusion (up to signs), thus we conclude $A_0\in\mathcal{F}_{[n]/I}\boxtimes\mathcal{O}_{\hat{\mathbf{P}}^I}$ is such that $A_0\rightarrow (A_\sigma)_{\sigma\in S_{d-1}}$. Finally, commutativity of the diagram and injectivity of both horizontal arrows forces $\text{Res}_{\hat{D}_I}(p^*\omega)=\Psi_I^*(A_0)$, as desired!
\end{proof}

\begin{lemma}\label{Residue as map of jets}
We have a morphism of $\mathcal{O}_{\hat{X}^n}$-modules
$$\hat{R}_n:=(\Psi_{\{n-1,n\}}^*)^{-1}\circ\text{Res}_{\hat{D}_{\{n-1,n\}}}:\hat{\mathcal{G}}_n\rightarrow(\Delta_{n-1,n})_*(\hat{\mathcal{G}}_{n-1})$$
where $\Delta_{n-1,n}:\hat{D}_{n-1,n}\hookrightarrow\hat{X}^n$ is the inclusion.   
\end{lemma}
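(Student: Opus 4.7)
The plan is to handle this in two steps: first verify $\hat{R}_n$ is a well-defined morphism with the asserted target, then verify that the output satisfies the residue constraints defining $\hat{\mathcal{G}}_{n-1}$. For well-definedness, the residue $\text{Res}_{\hat{D}_{n-1,n}}(\omega)$ lands in $\text{Im}(\Psi_{\{n-1,n\}}^*)$ by the very definition of $\hat{\mathcal{G}}_n$, and since $\mathfrak{g}$ is semisimple the map $\Psi_{\{n-1,n\}}^*$ is a locally-split embedding (as noted after Equation \ref{Psi_I^* final}), so its inverse on the image is well-defined. The identification $\hat{D}_{n-1,n}\simeq\hat{X}^{n-1}$ --- coming from $\hat{\mathbf{P}}^{\{n-1,n\}}$ being a point --- shows the target of $(\Psi_{\{n-1,n\}}^*)^{-1}$ is canonically $(\hat{\mathfrak{g}}_P^*)^{\boxtimes(n-1)}\otimes\Omega_{\hat{X}^{n-1},\mathring{X}^{n-1}}$ on $\hat{X}^{n-1}$, pushed forward to $\hat{X}^n$ along $\Delta_{n-1,n}$.

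For the residue constraints, relabel $[n-1]$ as $([n]\setminus\{n-1,n\})\cup\{\star\}$ with $\star$ denoting the merged point, and fix $J\subset[n-1]$ with $|J|\geq 2$; the goal is $\text{Res}_{\hat{D}_J}(\hat{R}_n\omega)\in\text{Im}(\Psi_J^*)$. The case $\star\notin J$ is immediate: $\hat{D}_J$ and $\hat{D}_{n-1,n}$ are distinct irreducible components of the NCD $\hat{D}$, so the iterated residues commute up to sign, and since $\Psi_J^*$ and $\Psi_{\{n-1,n\}}^*$ act on disjoint tensor factors they commute as well. Combining this with $\text{Res}_{\hat{D}_J}(\omega)=\Psi_J^*(B)$ (from $\omega\in\hat{\mathcal{G}}_n$) rewrites $\text{Res}_{\hat{D}_J}(\hat{R}_n\omega)$ as $\Psi_J^*(B')$ with $B'=(\Psi_{\{n-1,n\}}^*)^{-1}\text{Res}_{\hat{D}_{n-1,n}}(B)$.

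The substantive case is $\star\in J$: set $K:=(J\setminus\{\star\})\cup\{n-1,n\}\subset[n]$, so $|K|\geq 3$. Under $\hat{D}_{n-1,n}\simeq\hat{X}^{n-1}$, the divisor $\hat{D}_J\subset\hat{X}^{n-1}$ matches the stratum $\bar{S}_T=\hat{D}_K\cap\hat{D}_{n-1,n}$ for the $K$-tree $T$ obtained by grafting the $\{n-1,n\}$-star onto the merged edge of a $K'$-star, with the bijection $K'\cong J$ sending the merged edge to $\star$. The plan is then to establish $\Psi_{\{n-1,n\}}^*\text{Res}_{\hat{D}_J}(\hat{R}_n\omega)=\text{Res}_T(\omega)$ directly from the definition of $\hat{R}_n$, apply the compatibility relation \ref{compatibility} to $\text{Res}_{\hat{D}_K}(\omega)=\Psi_K^*(C)$ to obtain $\text{Res}_T(\omega)=\Psi_T^*(C')$ for an appropriate restriction, and finally invoke the factorization $\Psi_T^*=\Psi_{\{n-1,n\}}^*\circ\Psi_J^*$ (under the identification $K'\cong J$), which is visible from the construction of $\Psi_T^*$ as a composition in Equation \ref{Phi_T^* plain}. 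Injectivity of $\Psi_{\{n-1,n\}}^*$ then yields $\text{Res}_{\hat{D}_J}(\hat{R}_n\omega)\in\text{Im}(\Psi_J^*)$.

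The main obstacle I anticipate is the combinatorial bookkeeping of Case 2: pinning down precisely which tree $T$ corresponds to the stratum $\hat{D}_K\cap\hat{D}_{n-1,n}$, and verifying the factorization $\Psi_T^*=\Psi_{\{n-1,n\}}^*\circ\Psi_J^*$ under the relabelling $\star\leftrightarrow$ merged edge. Once this is in place, the residue constraint is an immediate consequence of the compatibility relation already established in Equation \ref{compatibility}.
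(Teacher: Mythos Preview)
Your proposal is correct and matches the paper's approach: both arguments pivot on the compatibility relation (Equation \ref{compatibility}) together with the commutativity of iterated residues along transversally intersecting components of $\hat{D}$, after noting that $(\Psi_{\{n-1,n\}}^*)^{-1}$ is well-defined on $\text{Res}_{\hat{D}_{n-1,n}}(\hat{\mathcal{G}}_n)$ by semisimplicity of $\mathfrak{g}$. Your version is in fact a bit more explicit than the paper's---you separate out the case $\star\notin J$ and spell out the final deduction via the factorization $\Psi_T^*=\Psi_{\{n-1,n\}}^*\circ\Psi_J^*$ and injectivity---but the substance is identical.
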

\begin{proof}
The inverse of $\Psi^*_I$ is well-defined on $\text{Res}_{\hat{D}_{I}}\hat{\mathcal{G}}_n$ since it is an isomorphism on it. Next, observe
$$\{I\subset [n]: \hat{D}_I\cap \hat{D}_{n-1,n}\neq\o, |I|\geq 2\} = \{I\circ_i\{n-1,n\} : i\in I\text{ and } I\subset [n-1], |I|\geq 2\}.$$
Thus, we must check $\text{Res}_{\bar{S}_T}\text{Res}_{\hat{D}_I}(\omega)\in\text{Im}(\Psi_T^*)$ for all sections $\omega\in\hat{\mathcal{G}}_n$. Here,  $\tilde{I}=I\circ_i\{n-1,n\}, I\subset[n-1],$ and $T$ is the $\tilde{I}$-tree obtained by inserting $\{n-1,n\}$ at vertex $i$ of the connected $I$-star.

Since $\omega\in\hat{\mathcal{G}}_n$, there exists $\omega_{\tilde{I}}$ such that $\Psi_{\tilde{I}}^*(\omega_{\tilde{I}})=\text{Res}_{\hat{D}_{\tilde{I}}}\omega$ and $\omega_{n-1,n}$ such that $\Psi_{n-1,n}^*(\omega_{n-1,n})=\text{Res}_{\hat{D}_{n-1,n}}(\omega)$. Transverse intersection and Equation \ref{compatibility}, imply
$$\text{Res}_{\bar{S}_T}(\Psi_{n-1,n}^*(\omega_{n-1,n}))=\text{Res}_{\bar{S}_T}\text{Res}_{\hat{D}_{n-1,n}}(\omega) = \text{Res}_{\bar{S}_T}\text{Res}_{\hat{D}_I}(\omega)=\text{Res}_{\bar{S}_T}\Psi_{\tilde{I}}^*(\omega_{\tilde{I}})=\Psi_T^*(\omega_{\tilde{I}})\in\text{Im}(\Psi_T^*),$$
as desired.
\end{proof}

\begin{thm}\label{pullback of BD is BG}
Let $p:\hat{X}^n\rightarrow X^n$ be the Fulton-Macpherson compactification over any algebraically closed field. Then $\Gamma(\hat{X}^n,\hat{\mathcal{G}}_n)^{-S_n}\simeq\Gamma(\hat{X}^n,p^*(\mathcal{G}_n))^{-S_n}$.
\end{thm}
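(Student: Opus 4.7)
The plan is to prove that the natural map
$$p^*:\ \Gamma(X^n,\mathcal{G}_n)^{-S_n}\ \hookrightarrow\ \Gamma(\hat{X}^n,\hat{\mathcal{G}}_n)^{-S_n},$$
supplied by Lemma \ref{pullback lands in BG}, is surjective. Together with the projection-formula identification $\Gamma(\hat{X}^n,p^*\mathcal{G}_n)\cong\Gamma(X^n,\mathcal{G}_n)$ (which uses $p_*\mathcal{O}_{\hat{X}^n}=\mathcal{O}_{X^n}$), this yields the theorem. Given any $\hat\omega\in\Gamma(\hat{X}^n,\hat{\mathcal{G}}_n)^{-S_n}$, the isomorphism $p|_{\mathring{\hat{X}}^n}:\mathring{\hat{X}}^n\xrightarrow{\sim}\mathring{X}^n$ lets me define $\omega$ to be the unique rational section of $(\mathfrak{g}_P^*)^{\boxtimes n}\otimes\Omega_{X^n}$ agreeing with $\hat\omega$ on $\mathring{X}^n$. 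The objective is to verify $\omega\in\Gamma(X^n,\mathcal{G}_n)^{-S_n}$; once this is done, $p^*\omega$ and $\hat\omega$ coincide on a dense open of $\hat{X}^n$ inside the torsion-free sheaf $(\hat{\mathfrak{g}}_P^*)^{\boxtimes n}\otimes\Omega_{\hat{X}^n}(\log\hat{D})$, forcing $p^*\omega=\hat\omega$ globally. $S_n$-antiinvariance of $\omega$ is automatic from the $S_n$-equivariance of $p$.

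First I would show that $\omega$ has at worst a logarithmic pole along each pairwise diagonal $D_{ij}$. The singular locus $D^{\mathrm{sing}}=\bigcup_{|I|\geq 3}D_I$ has codimension $\geq 2$ in $X^n$, and $p$ is an isomorphism over its complement, so the log structure of $\hat\omega$ along $\hat{D}_{ij}$ descends to a log structure for $\omega$ along the generic point of $D_{ij}$, which determines the pole order along all of $D_{ij}$. With this much in hand, the iterated residues $\text{Res}_{T_\sigma}(\omega)$ appearing in Definition \ref{BD sheaf} are well-defined, and they satisfy the required image condition by direct transfer from upstairs: Equation \ref{next compatibility of residue pullback} gives $\text{Res}_{T_\sigma}(\hat\omega)=\lambda_\sigma\cdot p^*(\text{Res}_{T_\sigma}(\omega))|_{S_{T_\sigma}}$ with $\lambda_\sigma\in\mathbf{Z}_{>0}$, while $\hat\omega\in\hat{\mathcal{G}}_n$ forces the left side into $\text{Im}(\Psi_{T_\sigma}^*)$. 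Since $\Psi_{T_\sigma}^*$ on $\hat{X}^n$ is by construction pulled back from its $X^n$-analogue (cf.~Equation \ref{Psi_T^* final}), the image containment descends: $\text{Res}_{T_\sigma}(\omega)\in\text{Im}(\Psi_{T_\sigma}^*)$ on $X^n$.

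The main obstacle is then showing $\omega\in\tilde\Omega_{X^n}(\log D)\otimes(\mathfrak{g}_P^*)^{\boxtimes n}$, equivalently that no "loop-type" denominator $z_{i_1 i_2}z_{i_2 i_3}\cdots z_{i_m i_1}$ appears in $\omega$. I plan to run the argument of Remark \ref{justify NCD log forms}: if such a loop were present, the iterated residue along $z_{i_1 i_2},z_{i_2 i_3},\ldots,z_{i_{m-2}i_{m-1}}$ would acquire a \emph{double} pole along $z_{i_1 i_m}=0$ (the prototype being $\text{Res}_{z_1=z_2}\frac{1}{z_{12}z_{23}z_{31}}=\frac{-1}{z_{13}^2}$). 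But this iterated residue equals $\text{Res}_T(\omega)$ for the corresponding binary tree $T$ of length $m-2$, and by the previous paragraph it lies in $\text{Im}(\Psi_T^*)$; since $\mathfrak{g}$ is semisimple, $\Psi_T^*$ is an $\mathcal{O}$-linear embedding whose image consists of forms with only logarithmic (never double) poles along the residual diagonals. This contradiction forces every loop coefficient of $\omega$ to vanish, so $\omega\in\tilde\Omega_{X^n}(\log D)\otimes(\mathfrak{g}_P^*)^{\boxtimes n}$. Combined with the log-pole and residue results above, this gives $\omega\in\Gamma(X^n,\mathcal{G}_n)^{-S_n}$, completing the proof.
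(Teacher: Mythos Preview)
Your approach differs substantially from the paper's. The paper does not push $\hat\omega$ down and verify membership in $\mathcal{G}_n$ directly; instead it runs an induction on $n$. It builds two short exact sequences: the residue maps $R_n=(\Psi_{n-1,n}^*)^{-1}\circ\text{Res}_{D_{n-1,n}}$ and $\hat R_n=(\Psi_{n-1,n}^*)^{-1}\circ\text{Res}_{\hat D_{n-1,n}}$ send $(-S_n)$-sections to $(-S_{n-1})$-sections, and in both cases anti-invariance forces the kernel to consist of forms with no poles at all, namely $\Gamma(p^*S^n(\mathcal{G}_1))\cong S^n(J^1_P(\mathcal M))$. With the outer terms matched (the left by inspection, the right by the inductive hypothesis), the middle isomorphism follows from the five-lemma. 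The virtue of this route is that the only ``residue compatibility'' used is the tautological one at a single pairwise diagonal, where $p$ is an isomorphism; the deep strata never have to be handled directly.

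Your argument, by contrast, leans on Equation~\ref{next compatibility of residue pullback} and on all iterated residues $\text{Res}_{T_\sigma}(\omega)$ being well-defined \emph{before} you know $\omega\in\tilde\Omega_{X^n}(\log D)$. As written in the paper, that equation is justified only for $\omega\in\mathcal{G}_n$ (the paragraph after it explicitly reduces to forms along a \emph{normal crossing} divisor), and if $\omega$ does carry a loop of length $m$ then for any $T_\sigma$ traversing $m-1$ or more of its edges the intermediate Poincar\'e residues are not defined. So Step~2, taken at face value, assumes what Step~3 is meant to prove. Your Step~3 then cites Step~2 for the specific partial tree of length $m-2$, which is fine provided you (i) choose a \emph{minimal} loop so that all intermediate residues along that partial chain really do meet only simple poles, and (ii) justify the compatibility $\text{Res}_T(\hat\omega)=\lambda\cdot p^*\text{Res}_T(\omega)$ for this $T$ by working generically along the strata (where $p$ is an isomorphism), rather than by quoting Equation~\ref{next compatibility of residue pullback}. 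With those two repairs the contradiction goes through: $\text{Res}_T(\hat\omega)\in\text{Im}(\Psi_T^*)$ forces at most logarithmic poles near the generic point of $D_{i_1 i_m}$, while downstairs you see order two. An alternative (and perhaps cleaner) fix is to bypass Step~2 entirely and argue that a loop of length $m$ forces $p^*\omega$ to acquire a pole of order $2$ along $\hat D_{\{i_1,\dots,i_m\}}$ (the denominator vanishes to order $m$ there while the volume form contributes a zero of order $m-2$), contradicting $\hat\omega\in\Omega_{\hat X^n}(\log\hat D)$ directly. Either way, the paper's inductive argument avoids these delicate bookkeeping issues altogether.
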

\begin{proof}
Consider the map ${R}_n:=(\Psi_{n-1,n}^*)^{-1}\circ\text{Res}_{{D}_{n-1,n}}:\mathcal{G}_n\rightarrow(\Delta_{n-1,n})_*(\mathcal{G}_{n-1})$ of $\mathcal{O}_{X^n}$-modules. On global sections, it maps anti-invariants to anti-invariants by definition \ref{antiinvariants of BG sheaf}.  The map
$$R_n:\Gamma(X^n,\mathcal{G}_n)^{-S_n}\rightarrow\Gamma(X^{n-1},\mathcal{G}_{n-1})^{-S_{n-1}}$$
has kernel
$$\text{Ker}(R_n)=\Gamma(X^n,(\mathfrak{g}_P^*)^{\otimes n}\otimes\Omega_{X^n})^{-S_n}\simeq (\Gamma(X,\mathfrak{g}_P^*\otimes\Omega_X)^{\otimes n})^{S_n} = S^n(J_P^{1}(\mathcal{M}))$$
where $S^n(J_P^{1}(\mathcal{M}))$ denotes the $n$th symmetric power of the 1st infinitesimal jet space of $\text{Bun}_G$. Indeed, suppose $\omega\in\Gamma(X^n,\mathcal{G}_n)^{-S_n}$ is such that $\text{Res}_{D_{n-1,n}}(\omega)=0$. Then $S_n$-anti-invariance of $\omega$ forces $\text{Res}_{D_{ij}}(\omega)=0$ for all $i,j$, and consequently $\omega$ is regular on $X^n$, thus proving the claim.

Next, consider $\mathcal{O}_{\hat{X}^n}$-modules. Sections of $p^*(S^n(\mathcal{G}_1))$ are regular on $\hat{X}^n$, hence automatically meet the residue constraint. Thus, $p^*(S^n(\mathcal{G}_1))\hookrightarrow\hat{\mathcal{G}}_n$. We claim $\Gamma(\hat{X}^n,p^*(S^n(\mathcal{G}_1)))$ is the kernel of 
$$\hat{R}_n:=(\Psi_{n-1,n}^*)^{-1}\circ\text{Res}_{\hat{D}_{n-1,n}}:\Gamma(\hat{X}^n,\hat{\mathcal{G}}_n)^{-S_n}\rightarrow\Gamma(\hat{X}^{n-1},\hat{\mathcal{G}}_{n-1})^{-S_{n-1}}.$$
 Again, $S_n$-anti-invariance forces a section $\omega\in\text{Ker}(\hat{R}_n)$ to vanish on all $\hat{D}_{ij}$. But then we follow the argument used in Lemma \ref{pullback lands in BG} to deduce $\text{Res}_{\hat{D}_I}(\omega)=0$ for all $I\subset[n]$. Indeed, $\omega_I:=\text{Res}_{\hat{D}_I}(\omega)$ is a form on $\hat{D}_I\simeq \hat{X}^{[n]/I}\times\hat{\mathbf{P}}^I$ which satisfies $\text{Res}_{T_\sigma}(\omega_I)=0$ for all binary trees $T_\sigma, \sigma\in S_{I}$, because of the commutativity of residue along normal crossing divisor property. Then Diagram \ref{compatibility diagram}, together with injectivity of $\Psi_{T_\sigma}^*,\Psi_I^*$, forces $\omega_I=0$.

We summarize the preceding discussions with the following commutative diagram:
$$
\begin{tikzcd}
0 \arrow[r] & \Gamma(\hat{X}^{n},{p^*(S^n(\mathcal{G}_1))}) \arrow[r]         & \Gamma(\hat{X}^n,\hat{\mathcal{G}}_n)^{-S_n} \arrow[r, "\hat{R}_n"]                   &\Gamma(\hat{X}^{n-1}, \hat{\mathcal{G}}_{n-1})^{-S_{n-1}} \arrow[r]         & 0 \\
0 \arrow[r] & \Gamma(\hat{X}^n,{p^*(S^n(\mathcal{G}_1))}) \arrow[u] \arrow[r] & \Gamma(\hat{X}^n,p^*(\mathcal{G}_n))^{-S_n} \arrow[u] \arrow[r, "R_n"] & \Gamma(\hat{X}^{n-1},{p^*\mathcal{G}_{n-1})^{-S_n}} \arrow[u] \arrow[r] & 0
\end{tikzcd}
$$
The theorem tautologically holds for $n=1,2$. Thus we are finished by induction and the 5 lemma. Note, Theorem \ref{differential operators and forms on Sigma} implies both above exact sequences, on global sections, are 
\begin{equation*}0\rightarrow S^n(J_P^{1,PD}(\mathcal{M}))\rightarrow J_P^{n,PD}(\mathcal{M})\rightarrow J_P^{n-1,PD}(\mathcal{M})\rightarrow 0.\qedhere\end{equation*}
\end{proof}

\subsection{Jet spaces and the Lie cooperad}\label{Jet spaces and the Lie cooperad}
There is a close relationship between the sheaves $\hat{\mathcal{G}}_n$ and the Lie operad $\mathcal{L}ie$ which we now explain. We will only provide a brief exposition to the theory of operads -- for a careful treatment, we refer the reader to e.g \cite{LV}. 

\subsubsection{Operads}
Let $k$ be an algebraically closed field, and let $\mathcal{P}$ be a k-linear operad. Let $V$ be a $k$-vector space. The \textit{free $\mathcal{P}$-algebra generated by $V$}, resp. \textit{free $\mathcal{P}$-algebra with divided powers generated by $V$}, defined by \cite{Fre}, is:
\begin{align}
\mathcal{P}\langle V\rangle&:=\bigoplus_{n\geq 1}(\mathcal{P}(n)\otimes_{k}V^{\otimes n})_{S_n},\;\text{ resp.}\\
\mathcal{P}^{PD}\langle V\rangle&:=\bigoplus_{n\geq 1}(\mathcal{P}(n)\otimes_{k}V^{\otimes n})^{S_n}
\end{align}
The difference between these algebras is the former involves $S_n$-coinvariants while the latter involves $S_n$-invariants. 

We in fact work with cooperads, so let us recall definitions in this setting:
\noindent A \textit{comodule over a $\mathcal{P}$-coalgebra $A$} is a $k$-vector space $M$ together with a collection of linear maps
$$q_n:M\rightarrow M\otimes A^{\otimes (n-1)}\otimes \mathcal{P}(n)$$
satisfying coassociativity, equivariance, and counit axioms. A \textit{comodule $\mathcal{M}$ over an cooperad $\mathcal{P}$} (\cite{KM}), is a collection of vector spaces $\mathcal{M}(n)$ with $S_n$-actions together with linear maps 
$$p_{\alpha_1,\dots,\alpha_k}:\mathcal{M}(\alpha_1+\dots+\alpha_k)\rightarrow \mathcal{M}(k)\otimes\mathcal{P}(\alpha_1)\otimes\dots\otimes\mathcal{P}(\alpha_k)$$
satisfying coassociativity, equivariance, and counit axioms.

\subsubsection{Jet spaces and Lie operad}

Let us recall the $\mathcal{L}ie$ operad. Let $x_1,\dots, x_n$ be variables, with the natural $S_n$-action. Consider the $(n-1)!$-dimension vector space
$$\mathcal{L}ie(n):=\text{Span}\langle [x_{s(1)},[\dots,[x_{s(n-1)},x_n]\dots]]:s\in S_{n-1}\rangle.$$
This forms an operad via the insertion of the bracket compositions. Next, define $$\Omega_{\hat{\mathbf{P}}}:=\{\Omega(\hat{\mathbf{P}}^n,\mathring{\mathbf{P}}^n)\otimes \wedge^n(k^n)\}_{n\geq 1}.$$
Under the perfect pairing in Proposition \ref{Lie and log}, this forms a cooperad, which we will call \textit{the Lie cooperad}. It is instructive to explain the composition maps explicitly.

Recall the stratification on $\hat{\mathbf{P}}^n$: The codimension 1 strata are normal crossing, labeled by $\tilde{D}_{I}$, for $I\subset[n]$ a proper subset of size at least 2. And the closure of the codimension $r$ strata are 
\begin{equation}\label{strata of hat{P}}
\tilde{S}_{I_1,\dots,I_r}:=\tilde{D}_{I_1}\cap\tilde{D}_{I_1\circ I_2}\cap\dots\cap\tilde{D}_{I_1\circ\dots\circ I_r}\simeq \hat{\mathbf{P}}^{I_1}\times\hat{\mathbf{P}}^{I_2}\times\dots\times \hat{\mathbf{P}}^{I_r}\times\hat{\mathbf{P}}^{n/(I_1\circ\dots\circ I_r)}
\end{equation}
This shows $\hat{\mathbf{P}}:=\{\hat{\mathbf{P}}^n\}_{n\geq 1}$ is a topological operad. Thus the composition maps of $\Omega_{\hat{\mathbf{P}}}$ are
$$\text{Res}_{T_{I_1,\dots,I_r}}:\Omega_{\hat{\mathbf{P}}}(\alpha_1+\dots+\alpha_k)\rightarrow\Omega_{\hat{\mathbf{P}}}(k)\otimes\bigotimes_{i=1}^k\Omega_{\hat{\mathbf{P}}}(\alpha_i).$$
The coassociativity axiom is satisfied because the divisors are normal crossing, so the order in which the residue is computed is the same (up to a sign, which is accounted for by the determinant).

Next, for $X$ a smooth projective curve over $k$, the collection of the resolution of the diagonals, $\hat{X}:=\{\hat{X}^n\}_{n\geq 1}$, forms a module over the topological operad $\hat{\mathbf{P}}$ because the strata $S_T$ of $\hat{X}^n$ satisfy \ref{strata decomposition}. Moreover,
$$\Omega_{\hat{X}}:=\{\Omega(\hat{X}^n,\mathring{X}^n)\otimes\wedge^n(k^n)\}_{n\geq 1}$$
forms a comodule over the cooperad $\Omega_{\hat{\mathbf{P}}}$ via the composition maps
$$\text{Res}_{T_{I_1,\dots,I_r}}:\Omega_{\hat{\mathbf{X}}}(\alpha_1+\dots+\alpha_k)\rightarrow\Omega_{\hat{\mathbf{X}}}(k)\otimes\bigotimes_{i=1}^k\Omega_{\hat{\mathbf{P}}}(\alpha_i).$$

Next, we sheafify. Taking global sections of the $\hat{\mathbf{P}}^I$-contribution in Equation \ref{Phi_I^* step 2} produces a morphism of $\mathcal{O}_X$-modules:
$$\Phi_I^*:\mathfrak{g}_P^*\rightarrow (\mathfrak{g}_P^*)^{\otimes I}\otimes \Omega({\hat{\mathbf{P}}^I,\mathring{\mathbf{P}}^I}).$$
This gives $\mathfrak{g}_P^*$ the structure of a sheaf of coalgebras over the Lie-cooperad. Now, consider:
$$\Omega_{\hat{X}}^{PD}\langle{\mathfrak{g}}_P^*\rangle:=\bigoplus_{n\geq 1}\bigg(\Gamma(\hat{X}^n,(\hat{\mathfrak{g}}_P^*)^{\boxtimes n}\otimes\Omega_{\hat{X}^n,\mathring{X}^n}\otimes\omega_{\hat{X}^n})\bigg)^{S_n}$$
where $\omega_{\hat{X}^n}$ is the determinant line bundle on $\hat{X}^n$, and $(\hat{\mathfrak{g}}_P^*)^{\boxtimes n}$ is the pullback of $(\mathfrak{g}_P^*)^{\boxtimes n}$ under $p:\hat{X}^n\rightarrow X^n$, respectively. The comodule morphisms (Recall diagram \ref{diagonal diagram} for the notation):
\begin{equation}\label{Residue as cooperad}
\text{Res}_{\hat{D}_{\alpha}}:\bigg((\hat{\mathfrak{g}}_P^*)^{\boxtimes m}\otimes\Omega_{\hat{X}^m,\mathring{X}^m}\bigg)\vert_{\hat{D}_\alpha}\rightarrow \bigg((\hat{\mathfrak{g}}_P^*)^{\boxtimes [m]/\alpha}\otimes\Omega_{\hat{X}^{[m]/\alpha},\mathring{X}^{[m]/\alpha}}\otimes\tilde{f}_1(\mathfrak{g}_P^*)^{\otimes(\alpha-1)}\bigg)\boxtimes \Omega_{\hat{\mathbf{P}}^\alpha,\mathring{\mathbf{P}}^{\alpha}}.
\end{equation}
endow $\Omega_{\hat{X}^n}^{PD}\langle \mathfrak{g}_P^*\rangle$ with the structure of a comodule over $\mathfrak{g}_P^*$ over the Lie cooperad. If we take global sections over the $\hat{\mathbf{P}}^{\alpha}$ contribution and then take a fiber over $\hat{X}^{[n]/\alpha}$, this morphism just says the free divided power space generated by $\mathfrak{g}^*$,
$$\Omega_{\hat{\mathbf{P}}}^{PD}\langle\mathfrak{g}^*\rangle:=\bigoplus_{n\geq1}\big((\mathfrak{g}^*)^{\otimes n}\otimes \Omega(\hat{\mathbf{P}}^{[n]},\mathring{\mathbf{P}}^{[n]})\big)^{S_n},$$
is a comodule over $\mathfrak{g}^*$ over the Lie cooperad. 
These observations are what allow us to make the final claim, and in particular suggest why $\Gamma(\hat{X}^n,\mathcal{G}_n)^{-S_n}$ has divided powers.

\begin{cor}\label{jets and Lie operad} The collection of BG-sheaves $\{\hat{\mathcal{G}}_n\}_{n\geq 1}$ form a sheaf over $\hat{X}:=\{\hat{X}^n\}_{n\geq 1}$, in the sense of \cite[Section 1.5]{GK}. Here, $\hat{X}$ is viewed as a topological module over the operad $\hat{\mathbf{P}}$. Moreover, 
$$\bigoplus_{n\geq 1}\Gamma(\hat{X}^{n},\hat{\mathcal{G}}_n\otimes\omega_{\hat{X}^n})^{S_n}$$
is a sub $\mathfrak{g}_P^*$-comodule of $\Omega_{\hat{X}}^{PD}\langle{\mathfrak{g}}_P^*\rangle$. Here, $\mathfrak{g}_P^*$ is viewed as a sheaf of coalgebras over the Lie cooperad.
\end{cor}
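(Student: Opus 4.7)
The plan is to assemble this corollary from ingredients already constructed: the relative cobracket maps $\Psi_T^*$ of \eqref{Psi_T^* final}, the normal-crossing residues $\text{Res}_T$ of \eqref{Res_T}, and the residue-cobracket compatibility \eqref{compatibility}, all organized by the stratification of $\hat{X}^n$ from Propositions \ref{strata} and \ref{strata for P}. For the first assertion (that $\{\hat{\mathcal{G}}_n\}_{n\geq1}$ is a sheaf on the topological operad module $\hat{X}$), I would produce, for each $[n]$-tree $T$ with stratum $\bar{S}_T \simeq \hat{X}^J \times \prod_{v \in T} \hat{\mathbf{P}}^{I(v)}$, a structure morphism
\[
\hat{\mathcal{G}}_n|_{\bar{S}_T} \longrightarrow \hat{\mathcal{G}}_{\#J} \boxtimes \bigotimes_{v \in T} \Omega_{\hat{\mathbf{P}}^{I(v)}, \mathring{\mathbf{P}}^{I(v)}},
\]
defined by $(\Psi_T^*)^{-1} \circ \text{Res}_T$. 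Since $\mathfrak{g}$ is semisimple, $\Psi_T^*$ is a locally-split embedding, and the defining residue constraints of $\hat{\mathcal{G}}_n$, applied inductively along $T$, guarantee that $\text{Res}_T$ lands in $\text{Im}(\Psi_T^*)$, so the inverse is well defined there.

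The key technical point is to check that the $\hat{X}^J$-component actually lies in $\hat{\mathcal{G}}_{\#J}$. For any further proper subset $I' \subset J$, one must show that $\text{Res}_{\hat{D}_{I'}}$ applied to $(\Psi_T^*)^{-1}\text{Res}_T(\omega)$ lies in $\text{Im}(\Psi_{I'}^*)$. Iterating diagram \eqref{compatibility diagram} once more, this reduces to the constraint $\text{Res}_{T'}(\omega) \in \text{Im}(\Psi_{T'}^*)$ for the grafted tree $T' = T \circ_v I'$, which is a defining property of $\hat{\mathcal{G}}_n$. This is essentially Lemma \ref{Residue as map of jets} in a general stratum, and coassociativity of the resulting cooperadic comodule structure then follows from commutativity of iterated residues along normal crossing divisors via \eqref{General residue exact sequence}. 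Signs contributed by switching the order of residues and by permuting factors are absorbed by the determinant line $\omega_{\hat{X}^n}$ and by the $\wedge^n(k^n)$ factor in $\Omega_{\hat{\mathbf{P}}}$, which is precisely why the statement uses $S_n$-invariants rather than the anti-invariants of Definition \ref{antiinvariants of BG sheaf}.

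For the second assertion, the inclusion $\hat{\mathcal{G}}_n \hookrightarrow (\hat{\mathfrak{g}}_P^*)^{\boxtimes n} \otimes \Omega_{\hat{X}^n, \mathring{X}^n}$ is built into the definition, so tensoring with $\omega_{\hat{X}^n}$ and passing to $S_n$-invariant global sections embeds
\[
\bigoplus_{n \geq 1} \Gamma\bigl(\hat{X}^n, \hat{\mathcal{G}}_n \otimes \omega_{\hat{X}^n}\bigr)^{S_n} \hookrightarrow \Omega_{\hat{X}}^{PD}\langle \mathfrak{g}_P^*\rangle.
\]
Compatibility with the $\mathfrak{g}_P^*$-comodule structure \eqref{Residue as cooperad} is then the commutativity of the square whose vertical arrows are the inclusions into the ambient $(\hat{\mathfrak{g}}_P^*)^{\boxtimes\bullet} \otimes \Omega$ sheaves and whose horizontal arrows are the two incarnations of $\text{Res}_{\hat{D}_\alpha}$. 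This is immediate because the map $\Psi_\alpha^*$ cutting out $\hat{\mathcal{G}}_n$ inside $(\hat{\mathfrak{g}}_P^*)^{\boxtimes n} \otimes \Omega_{\hat{X}^n,\mathring{X}^n}$ is by construction the relative form of the fiberwise map $\phi_\alpha^*$ from \eqref{The fibers phi_I^*}, which itself is dual to the Lie-cooperad pairing of Proposition \ref{Lie and log}.

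The main obstacle is the careful bookkeeping in the inductive step: one must verify that $\text{Res}_{T'} = \text{Res}_{\hat{D}_{I'}} \circ \text{Res}_T$ holds up to the correct sign, uniformly across grafted trees, and is compatible with the factorization $\Psi_{T'}^* = (\circ_v \Psi_{I'}^*) \circ \Psi_T^*$ via diagram \eqref{compatibility diagram}. Once this single grafting step is spelled out cleanly, extension to arbitrary iterated tree operations is automatic from the normal-crossing geometry of $\hat{X}^n$ recorded in Proposition \ref{strata}, and the two claims of the corollary follow in parallel.
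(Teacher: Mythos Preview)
Your proposal is correct and takes essentially the same approach as the paper: both reduce the corollary to showing that the residue map \eqref{Residue as cooperad} restricted to $\hat{\mathcal{G}}_n$ lands in $\hat{\mathcal{G}}_{[n]/\alpha}$ (equivalently, that $(\Psi_\alpha^*)^{-1}\circ\text{Res}_{\hat{D}_\alpha}$ is well defined into the smaller BG-sheaf), which is verified by the grafting argument of Lemma~\ref{Residue as map of jets} together with the compatibility $\text{Res}_T\circ(\Psi_I^*|_{S_T})=\Psi_T^*$ of \eqref{compatibility}. Your write-up is considerably more detailed---you spell out the general-tree structure maps, the sign absorption into $\omega_{\hat{X}^n}$, and the sub-comodule square explicitly---whereas the paper compresses all of this into a two-line invocation of Lemma~\ref{Residue as map of jets} and \eqref{compatibility}.
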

\begin{proof}
Both of the claims follow by observing that restricting the residue map in Equation \ref{Residue as cooperad} to $\hat{\mathcal{G}}_n$ induces the map of $\mathcal{O}_{\hat{D}_\alpha}$-modules (recall diagram \ref{diagonal diagram} for the notation):
\begin{equation}\label{Residue on jets}
\text{Res}_{\hat{D}_{\alpha}}:\hat{\mathcal{G}}_m\vert_{\hat{D}_\alpha}\rightarrow \hat{\mathcal{G}}_{[m]/\alpha}\otimes\tilde{f}_1^*(\mathfrak{g}_P^*)^{\otimes (\alpha-1)}\boxtimes \Omega_{\hat{\mathbf{P}}^\alpha,\mathring{\mathbf{P}}^{\alpha}}.
\end{equation}
 Indeed, checking this boils down to a similar proof as given in Lemma \ref{Residue as map of jets}. The main point is there is a compatibility \ref{compatibility}
\begin{equation*}
\text{Res}_{T}\circ(\Psi_{I}^*\vert_{S_T}) = \Psi_T^*\;\;\;\text{ for all $I$-trees $T$.}\qedhere \end{equation*}
\end{proof}


\begin{thebibliography}{99}
 \bibitem[BD]{BD} A. Beilinson, V. Drinfeld,  {Affine Kac-Moody algebras and polydifferntials},  {\em International Math. Research
Notices}, (1994), 1--11.
 \bibitem[BG]{BG} A. Beilinson, V. Ginzburg, {Infinitesimal structure of moduli space of $G$-bundles,}  {\em International Math. Research
Notices}, (1992), 63--74.
 \bibitem[BMR]{BMR} R. Bezrukavnikov, I. Mirkovic, D. Rumynin, {Localization of modules for a semisimple Lie algebra in prime characteristic} {\em Annals of Mathematics} (2008), 945--991.
 \bibitem[BO]{BO} P. Berthelot, A. Ogus, {Notes on crystalline cohomology}, {\em Princeton university press}, Vol. 21, (2015).
 \bibitem[BZF]{BZF} D. Ben-Zvi, E. Frenkel, Vertex algebras and algebraic curves, {\em American Mathematical Soc.}, No. 88, (2004).
\bibitem[EV92]{EV92} H. Esnault, E. Viehweg, Lectures on vanishing theorems. {\em Basel: Birkhauser}, Vol. 20. (1992).
\bibitem[EV94]{EV94} H. Esnault, E. Viehweg, Higher Kodaira-Spencer classes, {\em Mathematische Annalen}  \textbf{299.3} (1994), 491--528.

\bibitem[Fre]{Fre} B. Fresse. On the homotopy of simplicial algebras over an operad. {\em Trans. Amer. Math. Soc., 352} \textbf{9} :4113--4141, (2000).
\bibitem[FM94]{FM94} W. Fulton, R. MacPherson, A compactification of configuration spaces, {\em Annals of Mathematics} \textbf{139.1} (1994), 183--225.
\bibitem[G]{G} V. Ginzburg, Resolution of the diagonals, {\em The Moduli Space of Curves}  Boston, MA: Birkhauser Boston, (1995)  231--266.
\bibitem[GK]{GK} V. Ginzburg, M. Kapranov, Koszul duality for operads, {\em Duke Math. Journal}. \textbf{76} (1994),  203--272.
\bibitem[KM]{KM} M. Kapranov and Y. Manin, Modules and Morita Theorem for Operads, {\em American Journal of Mathematics} \textbf{123.5} (2001), 811--838. 
\bibitem[LV]{LV} J-L. Loday, B. Vallette, Algebraic operads {\em Grundlehren der mathematischen Wissenschaften}, Springer, Heidelberg, \textbf{346}, (2012).
\bibitem[Ran1]{Ran1} Z. Ran,  Canonical infinitesimal deformations, {\em J. Algebraic Geom} \textbf{9.1} (2000) 43--69.
\bibitem[Ran2]{Ran2} Z. Ran, Derivatives of Moduli, {\em International Math. Research
Notices} (1993), 93--106. 
\bibitem[Sor99]{Sor99} C. Sorger, Lectures on moduli of principal G-bundles over algebraic curves,  School on Algebraic Geometry (Trieste), ICTP Lect. Notes, Abdus Salam International Centre for Theoretical Physics, Trieste, (1999).
\bibitem[S76]{S76} K. Saito, On a generalization of de Rham lemma, {\em Annales de l'institut Fourier}. \textbf{26.2} (1976).
\bibitem[S79]{S79} K. Saito, Theory of logarithmic differential forms and logarithmic vector fields, {\em J. Fac. Sci. Univ. Tokyo Sect.} IA Math \textbf{27.2} (1980), 265--291.
\bibitem[Tate]{Tate} J. Tate, Residues of differentials on curves, {\em Annales scientifiques de l'Ecole Normale Superieure.} \textbf{1.1} (1968).
\bibitem[V82]{V82} E. Viehweg, Vanishing theorems. {\em Journal fur die reine und angewandte Mathematik} \textbf{335} (1982), 1--8. 
\end{thebibliography}
\end{document}